\pgfplotsset{every axis/.append style={
                    axis x line=middle,    % put the x axis in the middle
                    axis y line=middle,    % put the y axis in the middle
                    axis line style={-,color=blue}, % arrows on the axis
                    xlabel={$x$},          % default put x on x-axis
                    ylabel={$y$},          % default put y on y-axis
            }}
\DeclareMathOperator{\orb}{orb}
\DeclareMathOperator{\Sing}{Sing}
\DeclareMathOperator{\Cl}{Cl}
\DeclareMathOperator{\Opt}{Opt}
\def\qa{{\boldsymbol{{G}}}}
\def\ZZ{\mathbb{Z}}
\def\NN{\mathbb{N}}
\def\CC{\mathbb{C}}
\def\LL{\mathbb{L}}
\def\Q{\mathbb{Q}}
\def\QQ{\mathbb{Q}}
\def\PP{\mathbb{P}}
\def\K{\kappa}
\def\w{\omega}
\def\cM{\mathcal{M}}
\def\cO{\mathcal{O}}
\def\logres{{\rm LR}}
\def\O{\mathcal{O}}
\def\R{\Delta}
\def\ii{r}
\def\j{s}
\def\gG{\Gamma}
\def\G{\rm G}
\def\greedy{greedy\,}
\def\leftblowup#1#2{\smash{\mathop{\longleftarrow}
\limits^{\array{ll} _{#1} \\ ^{#2} \endarray}}}
\def\longleftmap#1{\smash{\mathop{\longleftarrow}\limits^{#1}}}
\newcommand\nul{\text{nul}}
\newtheorem{thm}{Theorem}[section]  %CHOOSE [chapter] or [section]
\newtheorem{prop}{Proposition}[section]
\newtheorem{proper}{Properties}[section]%
\newtheorem{problem}{Problem}[section]%
\newtheorem{cor}{Corollary}[section]
\newtheorem{lemma}{Lemma}[section]
\newtheorem{propdef}{Proposition-Definition}[section]
\theoremstyle{remark}
\newtheorem{rem}{Remark}[section]
\theoremstyle{definition}
\newtheorem{dfn}{Definition}[section]
\newtheorem{exam}{Example}[section]
\let\c@lemma\c@thm
\let\c@prop\c@thm
\let\c@propdef\c@thm
\let\c@proper\c@thm
\let\c@problem\c@thm
\let\c@conj\c@thm
\let\c@cor\c@thm
\let\c@rem\c@thm
\let\c@dfn\c@thm
\let\c@notation\c@thm
\let\c@exam\c@thm
\title
[The space of curvettes of quotient singularities...]
{The space of curvettes of quotient singularities and associated invariants}
\author[J.I. Cogolludo]{Jos{\'e} Ignacio Cogolludo-Agust{\'i}n}
\address{Departamento de Matem\'aticas, IUMA\\ 
Universidad de Zaragoza\\ 
C.~Pedro Cerbuna 12\\ 
50009 Zaragoza, Spain} 
\email{jicogo@unizar.es} 
\author[J. Martín]{Jorge Martín-Morales}
\address{Centro Universitario de la Defensa-IUMA\\ 
Academia General Militar\\ 
Ctra.~de Huesca s/n.\\ 
50090, Zaragoza, Spain} 
\email{jorge@unizar.es}
\begin{document}

\thanks{Both authors are partially supported by
the Spanish Government MTM2013-45710-C2-1-P and 
\emph{E15 Grupo Consolidado Geometr\'{\i}a} from the Gobierno de Aragón. 
The second author is also supported by FQM-333, from  
Junta de Andaluc{\'\i}a.}

\subjclass[2010]{32S05, 14H50, 32S25, 14F45}  

\begin{abstract}
This paper deals with a complete invariant $\R_X$ for cyclic quotient surface singularities.
This invariant appears in the Riemann Roch and Numerical Adjunction Formulas for normal surface singularities.
Our goal is to give an explicit formula for $\R_X$ based on the numerical information of $X$, that is,
$d$ and $q$ as in $X=X(d;1,q)$.
In the process, the space of curvettes and generic curves is explicitly described.
We also define and describe other invariants of curves in $X$ such as the LR-logarithmic eigenmodules, 
$\delta$-invariants, and their Milnor and Newton numbers. 
\end{abstract}

\maketitle

\section*{Introduction}
For a projective normal surface $X$ the following generalized Riemann Roch formula can be deduced 
(see e.g.~\cite{Corti,MR927963,MR0450628,Blache-RiemannRoch})
$$
\chi(\cO_X(D))=\chi(X)+\frac{1}{2}D\cdot (D-K_X)+R_X(D),
$$
where $R_X:\Cl(X)\to \QQ$ is a map defined on the $\QQ$-divisor class of~$X$, that is, on the group of Weil divisors up 
to Cartier. The invariant $R_X$ is in fact defined locally, that is, $R_X(D)=\sum_{x\in \Sing(X)} R_{X,x}(D)$. 
In~\cite{Blache-RiemannRoch} also formulas for $R_{X,x}(nK_X)$ are given for $K_X$ the canonical divisor. 
Such formulas depend on the discrepancy of $X$ and the fractional part of the pluricanonical divisor~$nK_X$.

In a related context, let $X=\PP^2(w_0,w_1,w_2)$ be a weighted projective plane and $D=\{f=0\}$ a quasi-projective curve 
of degree~$k$ in~$X$. In~\cite{CMO14} the following Numerical Adjunction Formula was proven
\begin{equation}
\label{eq-adjformula}
h^0(X;\cO_X(k-\sum w_i))=g_{\w,k}-\sum_{x\in \Sing(X)} \R_{X,x}(k),
\end{equation}
where $g_{\w,k}=\frac{k(k-\sum w_i)}{2w_0w_1w_2}+1$ and $\R_{X,x}(k)$ is an invariant depending only on the cyclic 
quotient surface singularity~$(X,x)$ and the $\QQ$-divisor class $\xi^k\in \qa_d=\Cl(X)$ ($\xi$ a $d$-th root of unity)
of a divisor in~$X=\CC^2/\qa_d$ a cyclic quotient singularity of order~$d$.
Since $R_{X,x}(D)=-\R_{X,x}(-D)$, this provides an interpretation for the Adjunction Formulas showed 
in~\cite{Blache-RiemannRoch} for general projective normal surfaces.

The purpose of this paper is to describe the invariant $\R_X:\Cl(X)\to \QQ$ of any cyclic quotient surface 
singularity~$X$. In order to do so we consider it as the difference of two invariants of germs: a $\delta$-invariant 
(see~\cite{CMO12}) and a $\K$-invariant (introduced in~\cite{Ortigas13PhD,Ortigas-cr}). Calculations of $\R_X$
can be effectively carried out for generic curves. Since $\R_X$ does not depend on the representative chosen in 
the divisor class, the calculation of a particular case provides an effective formula for $\R_X$. 
This is why we are interested in describing the space of curvettes and other generic $\QQ$-divisors on~$X$.

The main results in this paper can be summarized as follows. 
Let $X:=X(d;1,q)$ be a cyclic quotient surface singularity and fix $0\leq k<d$.
First, the concept of \emph{generic} germ in a given divisor class is defined as a minimal element in the multivaluation 
given by a minimal resolution of the singularity (see section~\ref{sec-curvettes} for details).
Our first goal will be to describe a generic element of degree $k$.
Consider $\mathbf q=[q_1,\dots,q_n]$ the Hirzebruch-Jung decomposition of $\frac{d}{q}$. 
In Section~\ref{sec-arithmetics} a list of integers $[k]=[k_0,k_1,\dots,k_n,k_{n+1}]$, referred to as the 
\greedy $X$-decomposition of $k$, is defined. The following description of generic $\QQ$-divisors in $X$ is given.

\begin{thm}
\label{thm-main-generic}
Let $[k]=[k_0,\dots,k_{n+1}]$ be the \greedy $X$-decomposition of $0\leq k<d$ and consider the germ 
\begin{equation}
\label{eq-fgeneric}
f=\prod_{i=1}^{n} \prod_{j=1}^{k_i} (x^{q_i}-\lambda^i_j y^{\bar q_i})\in \cO_X(k),
\end{equation}
with $\lambda^i_j\in \CC^*$ and $\lambda^i_{j_1}\neq \lambda^i_{j_2}$, $j_1\neq j_2$.
Then $f$ is generic.

Moreover, any generic germ $g\in \cO_X(k)$ is such that $\Gamma_\LL(g)=\Gamma_\LL(f)$ for an $f$ as above.
\end{thm}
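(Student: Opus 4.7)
The plan is to work on the minimal resolution $\pi:\tilde X\to X$, whose exceptional divisor is a chain $E_1,\ldots,E_n$ of smooth rational curves with self-intersections $-q_1,\ldots,-q_n$. Each $E_i$ defines a divisorial valuation $\nu_i$ on $\cO_X$, and a germ $g\in \cO_X(k)$ is generic if its valuation vector $\boldsymbol\nu(g)=(\nu_1(g),\ldots,\nu_n(g))$ is componentwise minimal among all elements of $\cO_X(k)$. Thus the theorem reduces to two assertions: (a) the product $f$ from~\eqref{eq-fgeneric} realizes the componentwise minimum of $\boldsymbol\nu$ on $\cO_X(k)$, and (b) this minimum determines the Newton polytope $\Gamma_\LL$ uniquely.

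First I would collect, from the curvette description of Section~\ref{sec-curvettes}, the valuation vector of a single factor. For generic $\lambda\in\CC^*$, the germ $x^{q_i}-\lambda y^{\bar q_i}$ cuts out a curvette at $E_i$, i.e.\ a smooth curve on $\tilde X$ meeting $E_i$ transversally at one generic point and disjoint from the other $E_j$. Its valuation vector is therefore the $i$-th column $\mathbf{v}_i$ of the intersection matrix data of $(\tilde X,\sum E_j)$, which is explicit in terms of $(d,q)$ via the Hirzebruch–Jung entries $q_j$ and the numerators $\bar q_j$ of the successive truncations of the continued fraction of $d/q$. With this, $\boldsymbol\nu(f)=\sum_{i=1}^n k_i\mathbf{v}_i$.

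Next I would verify that $f\in\cO_X(k)$ by reading the $\qa_d$-weight of each factor from the description of $\Cl(X)$, and then prove minimality. This is where the greedy $X$-decomposition from Section~\ref{sec-arithmetics} enters in an essential way: the whole point of the construction is that among all non-negative integer combinations $\sum a_i\mathbf{v}_i$ producing a divisor of class $k$, the coefficients $a_i=k_i$ are the (unique) ones that componentwise dominate no other, i.e.\ any other allowed combination has strictly larger valuation at some $E_j$. I expect that this will follow from an exchange argument: if another generic germ $g$ had $\nu_j(g)<\nu_j(f)$ for some $j$, one could factor out a curvette at $E_j$ from $g$, reduce to a smaller degree class, and iterate, ending up with a decomposition of $k$ that refutes greediness. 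This step — translating the abstract greedy property of $[k]$ into the componentwise minimality of $\boldsymbol\nu(f)$ — is the main obstacle, and will require careful bookkeeping of the unimodular basis relations satisfied by the $\mathbf{v}_i$.

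Finally, for the uniqueness of $\Gamma_\LL$, I would use that for a product of curvettes at $E_i$ the Newton polytope with respect to the log-resolution valuation lattice is the Minkowski sum of the individual curvette polytopes, which depends only on the multiplicities $k_i$ and not on the parameters $\lambda^i_j$. If $g$ is any other generic germ then $\boldsymbol\nu(g)=\boldsymbol\nu(f)$ by part~(a), and the compatibility between the valuation vector and the Newton polytope for curves with smooth strict transform forces $\Gamma_\LL(g)=\Gamma_\LL(f)$, completing the proof.
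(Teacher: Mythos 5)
Your overall framework — reduce to valuations from the Hirzebruch--Jung resolution, then relate componentwise minimality of the valuation vector to the greedy decomposition — is the right one, and it matches the strategy of the paper. But the crucial step is left as a sketch and, as sketched, it does not close. You acknowledge that ``translating the abstract greedy property of $[k]$ into the componentwise minimality of $\boldsymbol\nu(f)$\ldots\ is the main obstacle,'' and propose an exchange argument (factor a curvette out of $g$, drop degree, iterate). It is not clear that one can factor a curvette out of an arbitrary germ $g$, nor what ``refutes greediness'' means precisely at the end of the iteration. The paper's proof avoids all of this with a two-line arithmetic observation: for any $g\in\cO_X(k)$, the vector $\alpha(g)=[0,dv_1(g),\dots,dv_n(g),0]$ is a non-negative integral $X$-decomposition with $\|\alpha(g)\|_X\equiv k\pmod d$ (Remark~\ref{rem-Xdecomp}); from $v(g)\le v(f)$ and $\|\alpha(f)\|_X=k<d$ one gets $0\le\|\alpha(g)\|_X\le k$, hence $\|\alpha(g)\|_X=k$ exactly; then $\alpha(f)=[k]$ is, by Lemma~\ref{lemma-greedy}, the coin-change-making optimum, so $\|\alpha(g)\|_1\ge\|\alpha(f)\|_1$, and combined with the componentwise inequality this forces $\alpha(g)=\alpha(f)$. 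You have the greedy lemma available but never actually invoke the optimization statement it provides; that is the missing ingredient.

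Two further issues. First, your uniqueness argument for $\Gamma_\LL$ asserts ``if $g$ is any other generic germ then $\boldsymbol\nu(g)=\boldsymbol\nu(f)$ by part~(a).'' That does not follow: in a partial order, two elements can both be minimal without being equal, and part~(a) only shows that $f$ is minimal. The paper instead uses Proposition~\ref{prop-Nk}, which says that any generic germ already has $\Gamma_\LL(g)=\LL(k)$; uniqueness of the Newton polygon comes from this, not from uniqueness of the minimal valuation vector. Second, some of your background data is off: the self-intersections in the Hirzebruch--Jung chain are $-c_i$ (not $-q_i$), and the paper's valuation $v_i$ of a curvette at $E_i$ is essentially the indicator in the $i$-th slot (so $\alpha$ of the germ~\eqref{eq-fgeneric} is literally $[0,k_1,\dots,k_n,0]$), not a column of the inverse intersection matrix expressed through $q_j,\bar q_j$; using the latter would make the key identity $\|\alpha(f)\|_X=k$ far less transparent. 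These are not fatal, but they compound the vagueness of the main step.
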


This allows one to give a description of~$\R_X$.

\begin{thm}
\label{thm-Delta}
Let $f$ be a generic curve of degree~$k\neq 0$ in $X$. Then
$$
\R_X(k)=\delta_X(f)-\K_X(f),
$$
where 
$\K_X(f)=\|k\|_1-1$ and $\delta_X(f)$ can be obtained recursively as
$$\delta_X(f)=\frac{k(k-1-q+d)}{2dq}+\delta_{X_2}(\tilde f),$$
where $X_2=X(q_1;1,q_2)$ and $\tilde f$ is the strict transform of $f$ via the $(1,q)$-weighted blow-up of~$X$.

This describes effectively the map
$$
\array{rrcl}
\R_X: &\Cl(X)\cong \ZZ_d& \longrightarrow & \QQ\\
& k & \mapsto & \delta_X(f)-\K_X(f),
\endarray
$$
in terms of $k$ and $\mathbf q$.
\end{thm}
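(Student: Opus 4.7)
The plan is to split the theorem into three independent tasks. First, once the definition of $\R_X$ from the introduction is unwound as the difference $\R_X(D)=\delta_X(D)-\K_X(D)$ of two germ invariants (after \cite{CMO12,Ortigas13PhD,Ortigas-cr}), and once one recalls that $\R_X$ is a function on $\Cl(X)\cong \ZZ_d$, the identity $\R_X(k)=\delta_X(f)-\K_X(f)$ is independent of the representative in the class, so both sides may be computed on the explicit generic germ supplied by Theorem~\ref{thm-main-generic}.

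For $\K_X(f)$, I would write $f$ as in~\eqref{eq-fgeneric}: a product of curvettes of the form $x^{q_i}-\lambda y^{\bar q_i}$ (possibly together with the boundary monomials coming from $k_0$ and $k_{n+1}$), whose total branch count equals $\|k\|_1$. Each individual curvette corresponds to exactly one arrow of the dual graph of the minimal resolution, and by the additivity of the $\K$-invariant on products of curvettes (see~\cite{Ortigas-cr}) the assembled germ satisfies $\K_X(f)=\|k\|_1-1$. This step is essentially a bookkeeping exercise starting from the description of a generic germ in Theorem~\ref{thm-main-generic}.

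For the recursion on $\delta_X(f)$, I would perform the $(1,q)$-weighted blow-up $\pi\colon \tilde X\to X$ of the origin. The exceptional divisor $E$ is a weighted $\PP^1$, and $\tilde X$ has at most two cyclic quotient singular points, one of which, by Hirzebruch-Jung, is precisely $X_2=X(q_1;1,q_2)$, while the other chart is either smooth or does not meet $\tilde f$ in the generic situation. Applying the standard additivity of the $\delta$-invariant under the modification $\pi$, together with the weighted multiplicity $\nu_E(f)=k$, the self-intersection $E^2=-d/q$, and the discrepancy of $\pi$ (all computable directly from $d$ and $q$), the accumulated local correction should collapse exactly to $\frac{k(k-1-q+d)}{2dq}$, yielding
$$
\delta_X(f)=\frac{k(k-1-q+d)}{2dq}+\delta_{X_2}(\tilde f).
$$
Iterating along the expansion $\mathbf q=[q_1,\dots,q_n]$ terminates at a smooth chart and therefore expresses $\delta_X$ purely in terms of $k$ and $\mathbf q$. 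Combined with the closed form for $\K_X$, this gives the claimed effective description of $\R_X$.

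The main obstacle lies in this last step. One must verify (i) that the strict transform $\tilde f$ is again generic in $X_2$, and that its degree there is exactly the one prescribed by the tail of the \greedy $X$-decomposition $[k_0,\dots,k_{n+1}]$, so that the recursion is well-posed and terminates in $n$ steps; and (ii) that the interaction of the weighted multiplicity, the self-intersection of $E$, and the discrepancy of $\pi$, after accounting for the cyclic action, produces the announced rational number $\frac{k(k-1-q+d)}{2dq}$ on the nose, with no residual $\ZZ$-valued correction. Once both are controlled, Theorem~\ref{thm-Delta} follows.
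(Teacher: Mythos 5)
Your starting move (use the invariance of $\R_X$ under the choice of representative in the divisor class, then evaluate on the generic germ from Theorem~\ref{thm-main-generic}) matches the paper exactly, and the observation that $\K_X(f)$ is the only remaining computation is also correct: the paper disposes of the $\delta_X$ recursion by simply citing~\cite{CMO12}, where that formula is established, so no new argument is needed there. Your concern (ii) about re-deriving $\frac{k(k-1-q+d)}{2dq}$ from multiplicity, self-intersection, and discrepancy is thus not an obstacle the paper addresses at all; it is outsourced to the reference.

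The genuine gap is in the $\K_X$ step. You invoke ``additivity of the $\K$-invariant on products of curvettes'' citing \cite{Ortigas-cr}, but no such additivity exists, and it could not yield $\K_X(f)=\|k\|_1-1$: a single curvette is quasi-smooth, so $\K_X$ of each factor is $0$, and a naive additive rule would give $0$, not $\|k\|_1-1$. The offset ``$-1$'' is precisely the signal that $\K_X$ is not additive over branches. What the paper actually does (Lemma~\ref{lemma-K-generic}) is apply Proposition~\ref{prop-KP-blowup}, which writes $\K_X(f)=\K_\pi+\sum_{P}\K_P(\widehat f)$ after one $(1,q)$-blow-up, with $\K_\pi$ an explicit lattice-point count $\#\{(i,j)\mid i,j\geq 1,\ k\geq i+qj\equiv k\bmod d\}$. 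A direct enumeration shows $\K_\pi=k_1$ (or $k_1-1$ when $k=k_1q$), only one point of $E\cap\widehat D$ contributes, and recursion along the Hirzebruch--Jung chain accumulates to $\sum_i k_i-1=\|k\|_1-1$. You would need to replace the appeal to additivity by this blow-up/lattice-count recursion; as written, the proposed route would not close.
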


We also give a structure theorem on the $\cO_X$-eigenmodule of quasi-invariant germs $\cO_X(k)$ as follows.

\begin{thm}
\label{thm-descomp}
If $k\in \ZZ$ and $[k]=[k_0,k_1,\dots,k_n,k_{n+1}]$ is the \greedy $X$-de\-com\-po\-si\-tion of $k$, then
$$\cO_X(k)=\bigoplus_{i=1}^n \cO_X(q_i)^{k_i}.$$
\end{thm}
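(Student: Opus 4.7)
The plan is to prove the decomposition by induction on the length $n$ of the Hirzebruch--Jung continued fraction $\mathbf{q}=[q_1,\dots,q_n]$ of $d/q$, exploiting the recursive structure shared by the greedy $X$-decomposition and the $(1,q)$-weighted blow-up already invoked in Theorem~\ref{thm-Delta}.

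For the base case $n=1$, the continued fraction reduces to the single entry $q_1$ and the greedy $X$-decomposition of $k$ becomes essentially trivial. A direct monomial calculation then shows that $\cO_X(k)$ is generated over $\cO_X=\cO_X(0)$ by $k_1$-fold products of generators of $\cO_X(q_1)$, which establishes the statement in this case.

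For the inductive step, I would start from a generic $f\in\cO_X(k)$ produced by Theorem~\ref{thm-main-generic} and factor off the $k_1$ binomials $(x^{q_1}-\lambda^1_j y^{\bar q_1})\in\cO_X(q_1)$. Via the strict transform under the $(1,q)$-weighted blow-up of $X$, the remaining factor $\tilde f$ is identified with an element of $\cO_{X_2}(\tilde k)$, where $X_2=X(q_1;1,q_2)$ carries the shorter continued fraction $[q_2,\dots,q_n]$. The inductive hypothesis applied to $X_2$ then gives
$$
\cO_{X_2}(\tilde k)=\bigoplus_{i=2}^n \cO_{X_2}(q_i)^{k_i},
$$
and pulling this back and combining with the $\cO_X(q_1)^{k_1}$-factor yields the desired decomposition of $\cO_X(k)$.

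The main obstacle will be verifying two compatibilities. First, that the greedy $X$-decomposition $[k]=[k_0,\dots,k_{n+1}]$ restricts to the greedy $X_2$-decomposition of $\tilde k$ with indices $k_2,\dots,k_n$ unchanged; this should follow from the combinatorial definition of \emph{greedy} in Section~\ref{sec-arithmetics} together with the numerology of the Hirzebruch--Jung fraction. Second, that the strict transform identifies $\cO_X(q_i)$ with $\cO_{X_2}(q_i)$ as $\cO$-modules for $i\geq 2$ and respects the power notation $\cO_X(q_i)^{k_i}$; this should be handled by an explicit change-of-coordinates computation in the blow-up chart, using that the cyclic action on $X_2$ is induced from that on $X$ in a canonical way.
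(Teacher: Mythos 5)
Your high-level plan --- induct on the length of the Hirzebruch--Jung fraction via the $(1,q)$-blow-up --- matches the spirit of the recursive step in the paper's proof, and the two compatibilities you flag at the end are genuine issues. However, there is a gap in how the inductive step is framed. Theorem~\ref{thm-descomp} is an equality of $\cO_X$-eigenmodules, and the nontrivial containment is that \emph{every} element of $\cO_X(k)$ lies in the product $\bigotimes_i\cO_X(q_i)^{k_i}$. Your argument starts from a single generic germ $f$, factors it, and passes to the strict transform of that one germ. Factoring one $f$ gives no information about the other monomials of $\cO_X(k)$, and the step ``pulling this back and combining with the $\cO_X(q_1)^{k_1}$-factor yields the desired decomposition of $\cO_X(k)$'' is precisely where the theorem would have to be re-established: it does not follow formally from the inductive hypothesis on $\cO_{X_2}(\tilde k)$. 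What is actually needed (and what the paper does, using that $\cO_X(k)$ is a monomial module) is to take an \emph{arbitrary} monomial $x^{\ii_0}y^{\j_0}\in\cO_X(k)$, observe that $\ii_0+q\j_0\ge k$, and apply the blow-up substitution $y\mapsto x^q y$ to identify it with $x^k$ times a monomial of $\cO_{X_1}(k^{(1)})$ for $X_1=X(q_1;1,q_2)$ and $k=k_1q_1+k^{(1)}$, then recurse monomial by monomial.

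A second gap concerns the base of the recursion. Your base case $n=1$ (i.e.\ $X=X(d;1,1)$) is correct as far as it goes, but it is not the anchor the recursion actually needs. The paper requires a separate case --- its Case~2, for $[k]=[0,\dots,0,k_i,0,\dots,0]$ --- which it establishes by equating $\dim_{\CC}\bigl(\cO_X(k)/\cO_X(q_i)^{k_i}\bigr)$ with a lattice-point count under the Minkowski sum of Newton polygons and with the $\K$-invariant $\K_X(f)=k_i-1$ of the curvette product $f=\prod_j(x^{q_i}-\lambda_j y^{\bar q_i})$, computed after a single $(\bar q_i,q_i)$-blow-up via Proposition~\ref{prop-KP-blowup} and Lemma~\ref{lemma-K-generic}. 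This uses a different weighted blow-up from the $(1,q)$ one driving your induction, and a dimension/Pick-type argument with no counterpart in your proposal. Without an analogue of this step, nothing in your induction shows that a monomial of degree $k_iq_i$ is genuinely a $k_i$-fold product of monomials from $\cO_X(q_i)$.
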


%and define $\|k\|_1=\sum_{i=0}^{n+1} k_i$.

In Section~\ref{sec-LRlogarithmic} the module of LR-logarithmic forms $\cM^{\nul}_f$ is considered as a 
tool to determine the $\K$-invariant of a germ~$f$. This module is briefly defined as the submodule of 
2-forms whose pull-back after resolution can be extended holomorphically over the exceptional divisors.
In the following theorem $\cM^{\nul}_f$ is described.

\begin{thm}
\label{thm-Mnul}
Let $f\in \cO_X(k)$ be a generic germ where $[k]=[k_0,k_1,\dots,k_n,k_{n+1}]$ is the \greedy $X$-decomposition 
of $k$, then
$$\cM^{\nul}_f = \cO_X(k)\otimes \cO_X(w)=\bigoplus \cO_X(q_i)^{k_i+c_i-2}.$$
% %\cO_X([k]+\mathbf c-2).$$
% Moreover, $\dim_\CC \frac{\cO_X(k+w)}{\cO_X(k)\otimes \cO_X(w)}=\|k\|_1 -1.$
\end{thm}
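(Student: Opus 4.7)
The plan is to reduce the computation to one on the minimal resolution and then to combine it with the decomposition theorem for $\cO_X(k)$ already established (Theorem~\ref{thm-descomp}).

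First I would set up the module $\cM^{\nul}_f$ in terms of valuations on the minimal resolution $\pi\colon \widetilde X\to X$. By definition a $2$-form $\omega/f$ lies in $\cM^{\nul}_f$ if and only if its pullback $\pi^{*}(\omega/f)$ has no pole along any of the exceptional divisors $E_1,\dots,E_n$ of the Hirzebruch--Jung string. Writing $\omega=g\,dx\wedge dy$ for $g\in\cO_{\CC^2}$, the condition on each $E_i$ translates into a lower bound on the multiplicity of $g$ along $E_i$ in terms of the multiplicity of $f$ along $E_i$ and the log-discrepancy of $E_i$. For a generic $f$ of degree $k$ with the factorization~\eqref{eq-fgeneric}, those multiplicities are explicit in terms of the entries of the greedy $X$-decomposition $[k]=[k_0,\dots,k_{n+1}]$ and the continued-fraction data $\mathbf{q}$, so the conditions can be read off exactly.

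Next I would identify the integer class $w$ whose tensor product yields $\cM^{\nul}_f$. The defining inequalities for $\cM^{\nul}_f$ differ from those for $\cO_X(k)$ precisely by the shifts coming from the log-discrepancies of the components $E_i$; summing these shifts along the Hirzebruch--Jung string picks out a canonical class, which in the quotient singularity $X(d;1,q)$ is represented by a specific $w\in\ZZ_d$ (essentially $-1-q$ modulo~$d$, matching the singular term appearing in $\delta_X(f)$ in Theorem~\ref{thm-Delta}). This yields the first equality $\cM^{\nul}_f=\cO_X(k)\otimes\cO_X(w)=\cO_X(k+w)$ as $\cO_X$-eigenmodules. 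For the second equality I would apply Theorem~\ref{thm-descomp} to $k+w$: the greedy $X$-decomposition of $k+w$ must be computed, and the key step is to show that the $i$-th coefficient of the greedy decomposition of $k+w$ equals $k_i+c_i-2$, where $c_i$ are the entries of the Hirzebruch--Jung decomposition of $d/(d-q)$ (the dual sequence). This is a purely arithmetical verification: one checks that adding $w$ shifts each coordinate in the greedy expansion by exactly $c_i-2$, which is the expected contribution of the canonical form to each exceptional component.

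The main obstacle will be this combinatorial identity, i.e.\ showing that the greedy $X$-decomposition interacts with the tensor product by $\cO_X(w)$ in the predicted way. My approach is to argue by induction on $n$, using the $(1,q)$-weighted blow-up that appears in Theorem~\ref{thm-Delta} to reduce from $X=X(d;1,q)$ to $X_2=X(q_1;1,q_2)$. The first factor $\prod_{j=1}^{k_1}(x^{q_1}-\lambda^1_j y^{\bar q_1})$ of~\eqref{eq-fgeneric} gets extracted on $E_1$, contributing $k_1+c_1-2$ to the rank of the $\cO_X(q_1)$-summand, and the remaining branches become a generic germ $\tilde f$ on $X_2$ whose greedy decomposition is $[k_1,k_2,\dots,k_{n+1}]$, so that the inductive hypothesis produces the remaining summands $\bigoplus_{i\geq 2}\cO_{X_2}(q_i)^{k_i+c_i-2}$ and these lift back to $X$ as $\cO_X(q_i)$-eigenmodules. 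Genericity of $f$ is used to guarantee that the multiplicity estimates along each $E_i$ are sharp, so that no extra summands appear.
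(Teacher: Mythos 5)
Your proposal contains a fundamental error at the step ``$\cM^{\nul}_f=\cO_X(k)\otimes\cO_X(w)=\cO_X(k+w)$'': the last equality is false in general. Property~\ref{property}\eqref{property3} gives only the inclusion $\cO_X(k)\otimes\cO_X(w)\subseteq\cO_X(k+w)$, and the gap is typically nonzero. Indeed, by Lemma~\ref{lemma-K-generic}, for a generic $f\in\cO_X(k)$ one has
$$\dim_\CC\frac{\cO_X(k+w)}{\cM^{\nul}_f}=\K_X(f)=\|k\|_1-1,$$
which is strictly positive as soon as $k$ has more than one irreducible generic branch. So $\cM^{\nul}_f\subsetneq\cO_X(k+w)$ and the theorem is $\cM^{\nul}_f=\cO_X(k)\otimes\cO_X(w)$, \emph{not} $\cM^{\nul}_f=\cO_X(k+w)$. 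Measuring this very gap is the content of the proof.

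The second place where this manifests is your proposed ``purely arithmetical verification'' that the greedy $X$-decomposition of $k+w$ is $[k_i+c_i-2]$. This is also false: Lemma~\ref{lemma-kw} shows the greedy decomposition of $k+w$ is
$$[k+w]=[c_*-2,\,c_{r-1}-1,\,k_r-1,\,k_*,\,k_s-1,\,c_{s+1}-1,\,c_*-2],$$
where $k_r,k_s$ are the first and last nonzero entries of $[k]$, which is generally different from $[k]+[w]=[k_i+c_i-2]$. (The latter is an $X$-decomposition of $k+w$ but not the greedy one, so Theorem~\ref{thm-descomp} does not apply to it to yield $\cO_X(k+w)$.) If your claimed identity held, Theorem~\ref{thm-descomp} applied to $k+w$ would give $\cO_X(k+w)=\bigotimes\cO_X(q_i)^{k_i+c_i-2}=\cO_X(k)\otimes\cO_X(w)$ and there would be nothing to prove; the fact that the two decompositions differ is precisely what makes the theorem nontrivial. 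The paper's actual argument proceeds by first verifying the inclusion $\cO_X(k)\otimes\cO_X(w)\subseteq\cM^{\nul}_f$ directly, then comparing the two Newton polygons $\gG_1$ (for $\cO_X(k+w)$ with decomposition $\ell=[k+w]$) and $\gG_2$ (for $\cO_X(k)\otimes\cO_X(w)$ with decomposition $m=[k]+[w]$) and counting, via Pick's theorem, the lattice points between them, showing this count is $\|k\|_1-1=\K_X(f)$. Your proposal misses this dimension-count mechanism entirely, and the identity you rely on to replace it does not hold.
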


The left-hand-side equality is the main result of Theorem~\ref{thm-Mnul} whereas the right-hand-side equality 
is a direct consequence of Theorem~\ref{thm-descomp}.

In the process of proving the main results we define the Milnor number of a germ and the Newton number of a 
polygon for cyclic quotient surface singularities and extend Kouchnirenko's Theorem (see Theorem~\ref{thm-mu}) 
in this context and give an effective formula in Proposition~\ref{prop-muk}. Note that this Milnor number 
differs from the one defined in~\cite{ABLM-milnor-number}.

The paper is organized as follows: in Section~\ref{sec-settings} we give the necessary definitions and notation
to set the concepts to be dealt with in this paper such as the space of germs, curvettes, generic curves,
$\delta$-invariant, $\K$-invariant, $\R_X$, Milnor number, Newton polygons, and Newton numbers.
In Section~\ref{sec-arithmetics} we develop the arithmetical properties such as the $X$-\greedy decomposition of an
integer, necessary to prove Theorems~\ref{thm-main-generic}, \ref{thm-Delta}, and \ref{thm-descomp}. 
Section~\ref{sec-LRlogarithmic} is devoted to analyzing LR-logarithmic eigenmodules
for generic germs in order to prove their structure Theorem~\ref{thm-Mnul}.

\section{Settings and Definitions}
\label{sec-settings}
Let us recall some definitions and properties on quotient surfaces, embedded $\Q$-resolutions, and weighted blow-ups, 
see~\cite{AMO11a,Dolgachev82,fulton-intersection} for a more detailed exposition.

\subsection{Quotient surface singularities}\label{sec-quotient}
Let $\qa_{d}$ be the cyclic group of $d$-th roots of unity generated by a root of unity~$\xi$. 
Consider a vector of weights $(a,b)\in\ZZ^2$ and the action
\begin{eqnarray*}
\qa_d \times \CC^2 & \overset{}{\longrightarrow} & \CC^2, \\
(\xi, (x,y)) & \mapsto & (\xi^{a}\, x,\xi^{b}\, y).
\end{eqnarray*}
The set of all orbits $\CC^2 / \qa_{d}$ is called a {\em cyclic quotient space of type $(d;a,b)$} and it is denoted by $X(d;a,b)$.
%Unless otherwise stated, $\gcd(d,a) = \gcd(d,b) = 1$.
After changing the corresponding primitive $d$-th root of unity and transforming the action
into a small one, i.e.~$\gcd(d,a)=\gcd(d,b)=1$, the quotient space can always be assumed
to be of the form $(d;1,q)$ with $\gcd(d,q)=1$.

\subsection{Embedded \texorpdfstring{$\Q$}{Q}-resolutions and weighted blow-ups}
\label{subsec-wblowup}
An {\em embedded $\Q$-reso\-lu\-tion} of a $\QQ$-divisor $\{f=0\} \subset X(d;a,b)$
is a proper analytic map $\pi: Y \to X(d;a,b)$ such that:
\begin{enumerate}
\item $Y$ is an orbifold having abelian (cyclic) quotient singularities,
\item $\pi$ is an isomorphism over $Y\setminus \pi^{-1}(0)$,
\item $\pi^{-1}(f)$ is a $\Q$-normal crossing divisor on $Y$ (see
\cite[Definition~1.16]{Steenbrink77}).
\end{enumerate}

As a key tool to construct embedded $\Q$-resolutions we will recall toric transformations
or weighted blow-ups in this context (see~\cite{Oka-nondegenerate} as a general reference),
which can be interpreted as blow-ups of $\mathfrak{m}$-primary ideals.

The $(p,q)$-weighted blow-up is a birational morphism $\pi : \widehat{X(d;a,b)} \to X(d;a,b)$ that can be described by 
covering $\widehat{X(d;a,b)}$ with two charts $\widehat{U}_1$ and $\widehat{U}_2$.
For instance $\widehat{U}_1$ is of type $X \Big( \displaystyle\frac{pd}{e}; 1, \frac{-q+a' pb}{e} \Big)$, 
with $a'a=b'b\equiv 1 \mod (d)$ and $e = \gcd(d,pb-qa)$, and the equations are given by 
\begin{equation}
\label{eq-charts}
\begin{array}{rcl}
X \left( \displaystyle\frac{pd}{e}; 1, \frac{-q+a' pb}{e} \right)  & \longrightarrow &
\widehat{U}_1, \\ 
\big[ (x^e,y) \big] & \mapsto &  
\big[ ((x^p,x^q y),[1:y]_{\w}) \big]_{(d;a,b)}
\end{array}
\end{equation}
In particular, if the determinant $\left| \begin{smallmatrix} a & b \\ p & q \end{smallmatrix} \right| = 0$,
then $e=d$ and $\widehat{U}_1 = X(p;-d,q)$. The discussion for the second chart is analogous.

The exceptional divisor $E = \pi^{-1}(0)$ is identified with 
$\PP^1_{(p,q)}/\qa_{d}$. The singular points are cyclic 
and correspond to the origins of the charts.

Any cyclic quotient surface singularity $X$ can be described as $X:=X(d;1,q)$. This notation is not 
canonical since $X(d;1,q)=X(d;1,q')$, where $q' q=1 \mod d$. 
The classical well-known resolution of the surface $X:=X(d;1,q)$ is the so-called
\emph{Hirzebruch-Jung resolution} and it is very related to the Hirzebruch-Jung continued fraction
of $\frac{d}{q}$. To fix the notation, let us briefly recall it.

Let $q_0 := d$ and $q_1 := q$, and denote by $q_2, \ldots, q_n \in \mathbb{N}$ such that
$q_{i-1} = c_i q_i - q_{i+1}$, $i \geq 1$, with $q_1 > q_2 > \cdots > q_n := 1 > q_{n+1}:=0$.
The \emph{Hirzebruch-Jung continued fraction} of $\frac{d}{q}$ is $[c_1,\ldots,c_n]$,
where
$$
\frac{d}{q}=c_1-\frac{1}{c_2-\frac{1}{c_3-...}}
$$
and $c_{i+1}:=\left\lceil \frac{q_i}{q_{i+1}}\right\rceil$ is the round-up of the fraction $\frac{q_i}{q_{i+1}}$.
These numerical data encode all the necessary information of the resolution of $X$ as follows.

Consider the $(1,q)$-weighted blow-up at the origin of $X$. One obtains an exceptional
divisor $E_1$ with self-intersection number $-c_1^2$. If $q=1$ the new ambient space is
smooth and the resolution process is over. If $q >1$, then $E_1$ contains a singular point
of type $(q;1,-d)$ which is equal to $(q;1,q_2)$ since $-d \equiv q_2 \mod q$.
Repeating the same procedure until the final surface is smooth, one eventually
obtains $n$ exceptional divisors $E_1,\ldots,E_n$, all of them isomorphic to $\mathbb{P}^1$,
with self-intersection number $-c_i^2$ giving rise to a bamboo-shaped graph.

%$\LL=\{(\ii,\j)\in \NN^2\mid \ii+q\j\equiv 0\ \mod d\}$, find reference, charts to work with equations.

\subsection{Spaces of germs}

%Consider $X=X(d;a,b)$ and $\pi: \widehat{X} \to X$ the weighted blow-up defined as above.
Consider $X=X(d;a,b)$ and $\rho:(\CC^2,0)\to X$ the projection defined over the quotient surface by the cyclic action of 
order $d$ on $(\CC^2,0)$. The local ring $\cO_{\CC^2,0}$ of functions on $(\CC^2,0)$, admits a \emph{cyclic} 
graduation given by quasi-invariants
\begin{equation}
\label{eq-graduation}
\cO_{\CC^2,0}=\bigoplus_{k=0}^{d-1} \cO_X(k),
\end{equation}
where $\cO_X(k)=\{f\in \cO_{\CC^2,0}\mid f(\xi\cdot(x,y))=\xi^kf(x,y), \forall \xi \in \qa_{d} \}$.
The notation $\cO_X(k)$ is justified since its elements, however do not define functions on $X$, they
determine a well-defined set of zeroes in $X$, that is, $\{(x,y)\in \CC^2\mid f(x,y)=0\}$ is $\rho$-saturated and hence
it defines a Weil divisor in~$X$. This explains why $\cO_X(k)$ is also called the \emph{eigenmodule} associated with~$k$
(c.f.~\cite{Reid-Surface}). 
More precisely the space of eigenfunctions of the morphism $\xi\cdot:\cO_{\CC^2,0}\to \cO_{\CC^2,0}$ defined by 
$f(x,y)\mapsto f(\xi\cdot (x,y))$ with eigenvalue~$\xi^k$. These eigenmodules are in one-to-one correspondence with 
the (isomorphism classes of) divisorial submodules on $X$, that is, the group of Weil divisor classes is naturally 
isomorphic to $\qa_d$ and each class is given by the elements in~$\cO_X(k)$. Note that:

\begin{proper}
\label{property}
\mbox{}
\begin{enumerate}
\item\label{property1} $\cO_X(0)=\cO_X$ is the ring of functions on $X$,
\item\label{property2} $\cO_X(k_1)=\cO_X(k_2)$ whenever $k_1\equiv k_2 \mod d$,
\item\label{property3} $\cO_X(k_1)\otimes \cO_X(k_2)\subset \cO_X(k_1+k_2)$,
\item\label{property4} $\cO_X(k)$ is a f.g. monomial $\cO_X$-module.
%\item\label{property5} If $f \in \O_X(k)$, then $\widehat{f} \in \O_{\widehat{X}}(k)$.
\end{enumerate}
\end{proper}

\begin{proof}[Proof of~\emph{(4)}]
Properties~\eqref{property1} and \eqref{property3} imply that $\cO_X(k)$ is an $\cO_X$-module. 
Also, note that any germ in $\cO_X(k)$, say $f(x,y)=\sum_{\ii,\j} a_{\ii,\j}x^\ii y^\j$ satisfies that
$f(\xi^a x,\xi^b y)=\xi^k f(x,y)$, where $\xi$ is a primitive $d$-th root of unity. 
Hence $\xi^{a\ii+b\j} = \xi^k$ for all $i,j$ such that $a_{\ii,\j}\neq 0$, that is, $a\ii+b\j\equiv k \mod d$ 
and hence, each non-trivial monomial of $f(x,y)$ is in $\cO_X(k)$.
Moreover, this module is generated by the finite set of monomials 
$\{x^\ii y^\j \mid \ii,\j\in \{0,...,d-1\}, \ a\ii+b\j\equiv k \! \mod d \}$. %\qedhere
\end{proof}

% \begin{dfn}
% Let $A\subset \cO_X(k)$ (resp.~$B\subset \cO_X(k')$) be an $\cO_X$-submodule of $\cO_X(k)$ (resp.~$\cO_X(k)$). Then 
% we define their \emph{fraction module} as the submodule of $\cO_X(k-k')$
% $$
% [A:B]:=\{f\in \cO_X(k-k')\mid fh\in A \forall\ h\in B\}.
% $$
% \end{dfn}
% 
% Note that:
% \begin{enumerate}
%  \item\label{property5} $[\cO_X(k):\cO_X(k')]=\cO_X(k-k')$,
%  \item\label{property6} $[\cO_X(k):\cO_X(k')\otimes \cO_X(k'')]=\cO_X(k-k'-k'')$,
% 
% 
% 
% 

\subsection{LR-Logarithmic eigenmodules}
\label{sec-logarithmic}
% 
% Definitions of $\K(f)$, $\K_\pi$, $\cM^\nul$ and their relation.
% %eliminar formulas en \label{formula_KPf}.
% 
Let $\{f=0\}$ be a germ in $X=X(d;a,b)$ with $f \in \O_X(k)$ and consider $D=(f)$ its associated Weil divisor.
The $\cO_{\CC^2}$-modules of differential forms on $X\setminus D$ also inherit a cyclic graduation based on their 
eigenmodules similar to that of~\eqref{eq-graduation}. Multiplication by $\frac{dx\wedge dy}{f}$ induces a morphism
$$
\array{rrcl}
\varphi:&\cO_X(s)& \longrightarrow & \Omega^2_{X}[D](s-k-w)\\
&h& \mapsto & h \frac{dx\wedge dy}{f}.
\endarray
$$
where $\w=d-a-b$ is the degree of the canonical divisor on $X$.
Let us now fix a $\Q$-resolution $\pi: Y \to X$ of $D$. The notion of \emph{log-resolution logarithmic eigenmodule}
(LR for short) is defined in~\cite{Ortigas-cr,CMO14} as 
$\Omega^i_{X}(\logres \langle D\rangle)=\pi_*\Omega^i_{Y}(\log \langle\pi^*D\rangle)$. 
Denote by $\mathcal{M}^{\nul}_{f,\pi}(s)\subset \O_X(s)$ the $\O_X$-eigenmodule resulting as a pull-back of 
$\Omega^2_{X}(\logres \langle D\rangle)(s-k-w)$, namely consisting of all $h \in \O_X(s)$ such that the $2$-form
$$
h \frac{dx \wedge dy}{f} \in \Omega^2_X[D](s-k-w)
$$
is LR-logarithmic, with respect to $f$ and $\pi$, and admits a holomorphic extension outside the strict transform of 
$f$ under $\pi$. Note that the quotient 
$\O_X(s) / \mathcal{M}^{\nul}_{f,\pi}(s)$ has the structure of a finite dimensional complex vector space as long as 
$f$ defines an isolated singularity.

The following integer number does not depend on the chosen resolution:
\begin{equation}
 \label{eq-k0}
\K_{X}(f):=\dim_{\CC} \frac{\O_X(k+w)}{{\mathcal M}^{\nul}_{f}(k+w)}.
\end{equation}
For instance, it is known (see \cite[Chapter $2$]{JIphd}) that if $X=(\CC^2,0)$ and $f$ is a holomorphic germ, 
then $\K_{X}(f)$ is the $\delta$-invariant of the singularity.

\begin{prop} \label{prop-KP-blowup}
Let $\pi: \widehat{X} \to X$ be the weighted $(p,q)$-blow-up defined in Section~\ref{subsec-wblowup} and
consider $f \in \O_X(k)$. Then,
%Denote by $\nu_{p,q}(f)$ the $(p,q)$-multiplicity of $f$. Then
\begin{equation} \label{eq-KP}
\K_{X}(f) = \K_\pi + \sum_{P \in E \cap V(\widehat{f})} \K_P(\widehat{f}),
\end{equation}
where $\K_\pi = \# \{ (i,j) \in \mathbb{Z}^2 \mid i,j \geq 1, \ p i + q j \leq \nu_{p,q}(f),
\ ai+bj \equiv k \mod d \}$ and $\nu_{p,q}(f)$ denotes the multiplicity of $f(x^p,y^q)$.
\end{prop}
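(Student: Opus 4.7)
The plan is to split the obstruction space $\O_X(k+w)/\mathcal{M}^{\nul}_f(k+w)$ into two contributions according to where the pullback of $h\cdot dx\wedge dy/f$ fails to satisfy the LR-null condition on $\widehat{X}$: one from poles along the generic exceptional divisor $E$, the other from the finitely many points $P\in E\cap V(\widehat{f})$. One works with a resolution $Y\to X$ that factors as $Y\to\widehat{X}\to X$, so that the LR-null condition on $Y$ decomposes as (no pole along $E$) together with (local LR-null at each $P\in E\cap V(\widehat{f})$).

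The first step is the chart pullback computation. On the smooth cover of the first chart, $\pi$ reads $(u,v)\mapsto(u^p,u^q v)$, and one finds
$$\pi^*\!\left(x^i y^j \cdot \frac{dx \wedge dy}{f}\right) = p\cdot u^{p(i+1)+q(j+1)-1-\nu_{p,q}(f)}\, v^j\cdot \frac{du\wedge dv}{\widetilde{f}(u,v)},$$
with a symmetric expression in the second chart. Hence the pullback of a monomial $x^i y^j$ acquires a pole along $E$ precisely when $p(i+1)+q(j+1)\leq \nu_{p,q}(f)$.

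Next, introduce the intermediate submodule $\mathcal{M}^{\nul,E}_f(k+w)\subseteq \O_X(k+w)$ consisting of germs whose pullback has no pole along the generic point of $E$; by the chart computation this is a monomial submodule, spanned by the $x^i y^j\in\O_X(k+w)$ with $p(i+1)+q(j+1)>\nu_{p,q}(f)$. The quotient $\O_X(k+w)/\mathcal{M}^{\nul,E}_f(k+w)$ then has a $\CC$-basis indexed by monomials $x^i y^j$ with $ai+bj\equiv k+w\pmod{d}$ and $p(i+1)+q(j+1)\leq\nu_{p,q}(f)$. Under the substitution $(i',j')=(i+1,j+1)$, together with $w=d-a-b$, these conditions become $i',j'\geq 1$, $pi'+qj'\leq\nu_{p,q}(f)$, and $ai'+bj'\equiv k\pmod{d}$, so the dimension of the quotient equals exactly $\K_\pi$.

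For the remaining contribution, if $h\in\mathcal{M}^{\nul,E}_f(k+w)$ then $\pi^*(h\cdot dx\wedge dy/f)$ extends holomorphically along the generic locus of $E$ and over the smooth part of $\widehat{X}\setminus V(\widehat{f})$, so the only remaining obstructions to being globally LR-null are supported at the finitely many points $P\in E\cap V(\widehat{f})$. Define
$$\Phi:\mathcal{M}^{\nul,E}_f(k+w)\longrightarrow\bigoplus_{P\in E\cap V(\widehat{f})} \O_{\widehat{X},P}(k_P+w_P)\big/\mathcal{M}^{\nul}_P(\widehat{f}),$$
where $(k_P,w_P)$ are the eigenmodule degrees at $P$ determined by the chart identifications of \S\ref{subsec-wblowup}, sending $h$ to the tuple of classes of its pulled-back 2-form. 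Locality of the LR-null condition gives $\ker\Phi=\mathcal{M}^{\nul}_f(k+w)$, and a Chinese Remainder-type argument (each target summand is Artinian, the points $P$ are finite in number and distinct, and monomial representatives of local classes lift through the pullback to global elements of $\O_X(k+w)$) shows $\Phi$ is surjective. Adding the two dimension counts yields the stated formula. The principal obstacle is the surjectivity of $\Phi$: one must verify that the pullback map $\O_X(k+w)\to\bigoplus_P\O_{\widehat{X},P}(k_P+w_P)$ has image surjecting onto each local quotient modulo $\mathcal{M}^{\nul}_P(\widehat{f})$, which requires careful tracking of the eigenmodule shifts $(k_P,w_P)$ and the identification of local generators under the chart formulas of Section~\ref{subsec-wblowup}.
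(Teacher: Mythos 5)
Your argument follows essentially the same strategy as the paper: pull back the $2$-form under the single weighted blow-up, split the obstruction into the part coming from a pole along the generic point of $E$ (which is a monomial count, giving $\K_\pi$) and the part supported at the finitely many points of $E\cap V(\widehat f)$. The main difference is that you compute on the smooth cover $(u,v)\mapsto (u^p,u^qv)$ and land directly on the set defining $\K_\pi$, whereas the paper computes on the chart $X(pd/e;1,*)$, obtains the auxiliary count $\tilde\K_\pi$ with the shift $-p-q+e$, and then needs the $e$-divisibility Lemma~\ref{lemma-ij} to identify $\tilde\K_\pi$ with $\K_\pi$. Your shortcut is cleaner, but it tacitly uses that ``pole along $E$ on the cover'' is the same condition as ``pole along $E$ on $\widehat X$''; that reconciliation is exactly what the paper's $e$-bookkeeping is doing, so the two routes are equivalent rather than one being more elementary. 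The one place your write-up is more explicit than the paper is the surjectivity of the localization map $\Phi:\cM^{\nul,E}_f(k+w)\to\bigoplus_P\O_{\widehat X,P}/\cM^{\nul}_P(\widehat f)$. You correctly flag this as the crux of the additivity in~\eqref{eq-KP}; the paper passes from the characterization ``$h\in\cM^{\nul}_f(k+w)$ iff $N\ge 0$ and $\widehat h\in\cM^{\nul}_{\widehat f,P}$ for all $P$'' directly to the dimension formula with a ``this proves,'' which implicitly invokes the same surjectivity without arguing it. So you have not introduced a gap that the paper avoids; you have rather isolated the step that both versions leave unjustified, and your sketched Chinese-Remainder/monomial-lifting argument is the right kind of thing to fill it.
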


\begin{proof}
Consider $h \in \O_X(k+w)$ and the $2$-form $\psi = h \displaystyle \frac{dx \wedge dy}{f}$ and let us
calculate the pull-back of $\psi$ after the blowing-up $\pi$,
\begin{equation}
\label{eq-2form}
\psi \ \longleftmap{\pi} \ \frac{p}{e} \, x^N \ \widehat{h} \, \frac{dx \wedge dy}{\widehat f},
\end{equation}
where $N = (\nu_{p,q}(h)-\nu_{p,q}(f)+p+q-e)/e$ and $e=\gcd(d,pb-qa)$, see Section~\ref{subsec-wblowup}.
Thus $h \in \cM_f^\nul(k+w)$ iff $N \geq 0$ and
$\widehat{h} \in \cM_{\widehat{f},P}^\nul$ for all $P \in E \cap V(\widehat{f})$. This proves
$$
\K_{X}(f) = \tilde{\K}_\pi + \sum_{P \in E \cap V(\widehat{f})} \K_P(\widehat{f}),
$$
where $\tilde{\K}_\pi = \dim_{\CC} \left( \frac{\O_X(k+w)}{\{ h \, \mid \,\nu_{p,q}(h) \, \geq \, 
\nu_{p,q}(f) - p - q + e\}} \right)$. 

It remains to show that $\tilde{\K}_\pi = \K_\pi$. Since both $\cO_X$-modules are monomial, the dimension of 
the quotient can be computed simply by counting the monomials in $\O_X(k+w)$ not in 
$\{ h \in \O_X(k+w) \, \mid \,\nu_{p,q}(h) \, \geq \, \nu_{p,q}(f) - p - q + e\}$.
Identifying each monomial $x^iy^j$ with the integral point $(i,j)$ in $\mathbb{Z}_{\geq 0}^2$, one obtains
\begin{align*}
\tilde{\K}_{\pi} &= \# \{ (i,j) \mid i,j \geq 0, \ pi+qj < \nu_{p,q}(f) - p - q + e, \ ai+bj \equiv k - a - b \!\! \mod d \} \\
&= \# \{ (i,j) \mid i,j \geq 1, \ pi+qj < \nu_{p,q}(f) + e, \ ai+bj \equiv k \!\! \mod d \} \\
&= \# \{ (i,j) \mid i,j \geq 1, \ pi+qj \leq \nu_{p,q}(f) , \ ai+bj \equiv k \!\! \mod d \}.
\end{align*}
\end{proof}

The latter equality is a direct consequence of the following result.

\begin{lemma}\label{lemma-ij}
Under the conditions above 
$$
\array{c}
\{(i,j)\mid pi+qj \leq \nu_{p,q}(f),\ ai+bj \equiv k \!\! \mod d\}=\\
\{(i,j)\mid \nu_{p,q}(f) \geq pi+qj \equiv \nu_{p,q}(f) \!\! \mod e,\ ai+bj \equiv k \!\! \mod d\}.
\endarray$$
\end{lemma}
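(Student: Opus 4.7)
The inclusion $\supseteq$ is immediate, since the right-hand set is cut out from the left-hand set by the single additional congruence $pi+qj \equiv \nu_{p,q}(f) \bmod e$. So the content of the lemma is that this congruence is automatic under the hypotheses on the left: whenever $ai+bj\equiv k\bmod d$, the value $pi+qj$ already lies in the fixed residue class $\nu_{p,q}(f)\bmod e$. My plan is to pin this down by fixing a distinguished reference point and then translating.

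Concretely, I would begin by using Property~\ref{property}\eqref{property4}: since $f\in\cO_X(k)$ is a sum of monomials $x^i y^j$ each satisfying $ai+bj\equiv k\bmod d$, the multiplicity $\nu_{p,q}(f)$ is attained at some exponent pair $(i_0,j_0)$ with $pi_0+qj_0=\nu_{p,q}(f)$ and $ai_0+bj_0\equiv k\bmod d$. For a generic candidate $(i,j)$ with $ai+bj\equiv k\bmod d$, set $\alpha=i-i_0$ and $\beta=j-j_0$. The problem then reduces to the homogeneous statement:
$$
a\alpha+b\beta\equiv 0 \bmod d \ \Longrightarrow\ p\alpha+q\beta\equiv 0 \bmod e.
$$

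The key identity is the $2\times 2$ linear combination
$$
a(p\alpha+q\beta)-p(a\alpha+b\beta)=(aq-pb)\beta,
$$
which gives $a(p\alpha+q\beta)\equiv -(pb-qa)\beta\bmod d$. Since we may assume the action is small, $\gcd(a,d)=1$, so I can invert $a$ modulo $d$ and obtain $p\alpha+q\beta\equiv -a^{-1}(pb-qa)\beta\bmod d$. Reducing modulo $e=\gcd(d,pb-qa)$, the right-hand side vanishes because $e\mid(pb-qa)$, while the congruence modulo $d$ implies one modulo $e$ because $e\mid d$. Hence $p\alpha+q\beta\equiv 0\bmod e$, i.e.\ $pi+qj\equiv pi_0+qj_0=\nu_{p,q}(f)\bmod e$, as required.

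The only mild subtlety is noticing that one needs an actual reference monomial of $f$ rather than an arbitrary lattice point satisfying $pi_0+qj_0=\nu_{p,q}(f)$; this is where $f\in\cO_X(k)$ is used, to ensure that $(i_0,j_0)$ can simultaneously be chosen in the arithmetic progression $ai_0+bj_0\equiv k\bmod d$. Everything else is bookkeeping with the identity above. No separate argument is needed for $(i,j)\in\mathbb{Z}_{\geq 1}^2$ since the reasoning is purely arithmetic and independent of positivity constraints; those are inherited from whichever side one starts on.
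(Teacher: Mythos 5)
Your proof is correct and follows essentially the same route as the paper: fix a reference monomial $(i_0,j_0)$ of $f$ attaining $\nu_{p,q}(f)$, subtract to homogenize, and use the $2\times2$ elimination $a(p\alpha+q\beta)-p(a\alpha+b\beta)=(aq-pb)\beta$ to pin down the residue modulo $e$. The only cosmetic difference is the final step: you invert $a$ modulo $d$ using the small-action hypothesis $\gcd(a,d)=1$, whereas the paper symmetrically derives $e\mid a\ell$ and $e\mid b\ell$ and concludes from the slightly weaker assumption $\gcd(d,a,b)=1$; both are valid under the paper's conventions.
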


\begin{proof}
Consider the system $\{ p i + q j + \ell = \nu_{p,q}(f), ai + bj \equiv k \! \mod d \}$. It will
be shown that $e$ divides $\ell$. Since $0 \neq f \in \O_X(k)$, there exist $0 \neq (i_0, j_0) \in \NN^2$ such that
$\nu_{p,q}(f) = pi_0 + qj_0$ with $k=ai_0 + bj_0$. Then,
$$
\begin{cases}
p(i-i_0) + q(j-j_0) \equiv - \ell, \\
a(i-i_0) + b(j-j_0) \equiv 0.
\end{cases}
$$
Multiplying the first equation by $a$ and the second one by $p$, one obtains $(aq-pb)(j-j_0) \equiv - a \ell$,
thus $e:=\gcd(d,aq-pb)$ divides $a\ell$. Analogously, $e$ divides $b\ell$ too and hence $e|\ell$ because
$\gcd(d,a,b)=1$.
\end{proof}

\subsection{Curvettes, valuations, and generic germs}
\label{sec-curvettes}
In this section we will fix the surface singularity $X=X(d;1,q)$ and the Hirzebruch-Jung resolution $\pi$ described above,
which is a composition of $n$ weighted blow-ups centered at singular points.
Consider $E_i$ the exceptional component obtained at the $i$-th blow-up according to~$\pi$.
Following Deligne~\cite{Deligne-intersections}, an \emph{$E_i$-curvette} on $X$ is the image of a smooth curve 
transversal to $E_i$ at a smooth point.

The Hirzebruch-Jung resolution introduced in Section~\ref{sec-quotient} defines valuations $v_i:\cO_X^* \to \ZZ$ associated 
with each exceptional divisor $E_i$, $i=1,...,n$ by calculating the intersection multiplicity of a germ $f\in \cO_X^*$
with $E_i$ in the resolution process. Note that this definition can be naturally to $\cO_X^*(k)$ as follows: 
$v_i(f):=\frac{1}{d}v_i(f^d)$, where $f\in \cO_X^*(k)$ and thus $f^d\in \cO_X^*$. This results into a family of morphisms:
$v_i:\cO_X^*(k) \to \frac{1}{d}\ZZ$ satisfying 
$$
v_i(h\cdot f)=v_i(h)+ v_i(f), \quad \quad \forall\ h\in \cO_X^*, f\in \cO_X^*(k),
$$
and
$$
v_i(f+g)\geq \min\{v_i(f),v_i(g)\}, \quad \quad \forall\ f,g\in \cO_X^*(k).
$$
We will denote by $v=\sum v_i:\cO^*_X(k)\to (\frac{1}{d}\ZZ)^n$ the morphism $v(f)=(v_i(f))$.

\begin{dfn}
A $\Q$-divisor $D=\{f=0\}$, $f\in \cO_X^*(k)$ is called \emph{generic} if $v(f)$ is a minimal element in 
$v(\cO_X^*(k))\subset (\frac{1}{d}\ZZ)^n$ with its induced partial order, that is, $f$ is minimal if $v(g)\leq v(f)$ implies $v(g)=v(f)$ for any $g \in \cO_X^*(k)$.
\end{dfn}

\subsection{Newton polygon, Milnor number, and \texorpdfstring{$\delta$}{delta}-invariant}
\label{sec-newton}
Let $D=\{f=0\}$ be a $\Q$-divisor with $f\in \cO_X(k)$. Define the \emph{Newton diagram} of $f=\sum a_{\ii,\j}x^\ii y^\j$ as 
$$N_\LL(f)=\{(\ii,\j)\in \LL\mid a_{\ii,\j}\neq 0\}\subset \LL(k),$$
where $\LL(k):=\{(\ii,\j)\in \NN^2\mid \ii+q\j\equiv k \mod d\}$ and $\LL:=\LL(0)$ is the structure lattice.
The convex hull of $N_\LL(f)+\LL$ is called the $\LL$-\emph{Newton polygon} of $f$ and denoted by~$\Gamma_\LL(f)$.
This extends the notion of Newton polygon given in~\cite{Reid-Surface} for functions on~$X$, that is, for what we
refer here as the structure lattice. 
The following properties are an immediate consequence of the definitions:

\begin{prop}\label{proper-newton} 
\mbox{}
\begin{enumerate}
 \item\label{proper-newton-1}
$N_\LL(f_1f_2)=N_\LL(f_1)\oplus N_\LL(f_2)$, where $\oplus$ denotes the Minkowski sum,
 \item\label{proper-newton-2}  
The number of compact faces of $\Gamma_\LL(f)$ is an upper bound of the number of irreducible branches of~$f$,
 \item\label{proper-newton-3}
$\bigcup_{f\in \cO_X(k)} N_\LL(f)=\LL(k)$,
 \item\label{proper-newton-4} 
$N_\LL(f)\subset N_\LL(g) \Rightarrow v(g)\leq v(f)$.
%  \item\label{proper-newton-5}
% $[N_\LL(f):N_\LL(g)]=N_\LL(f)\ominus N_\LL(g)$.
\end{enumerate}
\end{prop}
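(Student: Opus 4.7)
\emph{Plan.} Since the four items are asserted as immediate from the definitions, my plan is just to identify the combinatorial ingredient behind each.

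For (3), quasi-invariance gives both inclusions: any monomial $x^\ii y^\j$ appearing in some $f\in\cO_X(k)$ must satisfy $\ii+q\j\equiv k\pmod d$, that is, $(\ii,\j)\in\LL(k)$; conversely, every such $(\ii,\j)$ is realized by the monomial $x^\ii y^\j\in\cO_X(k)$ whose Newton diagram is the singleton $\{(\ii,\j)\}$. For (1), the convolution formula $f_1 f_2=\sum_{(\alpha,\beta)+(\gamma,\delta)=(\ii,\j)} a_{\alpha,\beta} b_{\gamma,\delta}\, x^\ii y^\j$ immediately yields $N_\LL(f_1f_2)\subset N_\LL(f_1)\oplus N_\LL(f_2)$; on the level of polygons, each extremal vertex of the Minkowski sum is attained by a unique pair of extremal vertices of the factors, which prevents cancellation of the leading coefficient there and supplies the reverse inclusion. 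For (4), the valuations $v_i$ coming from the Hirzebruch--Jung tower of weighted blow-ups are monomial: each is $\tfrac{1}{d}$ times the evaluation of a primitive linear functional $\ell_i$ on the exponent lattice. Hence $v_i(f)$ equals the minimum of $\ell_i$ on $N_\LL(f)$, attained at a vertex of $\Gamma_\LL(f)$ where no cancellation is possible; enlarging the support from $f$ to $g$ can only decrease this minimum, giving $v_i(g)\leq v_i(f)$ for every $i$ and hence $v(g)\leq v(f)$.

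Item (2) is the only one requiring a little substance. The plan is to factor $f$ into irreducible branches $f=f_1\cdots f_r$ and apply (1) to obtain $\Gamma_\LL(f)=\Gamma_\LL(f_1)\oplus\cdots\oplus\Gamma_\LL(f_r)$ as a Minkowski sum in the plane. Each irreducible branch $f_i$ contributes at least one compact face of its own Newton polygon -- the edge dictated by its tangent cone together with the two vertices it meets, or a single vertex if $f_i$ is an axis factor -- and these contributions survive in the Minkowski sum as distinct compact faces of $\Gamma_\LL(f)$. The step requiring the most care, and my expected main obstacle, is the situation where several branches share a tangent direction: their Newton-polygon edges then align on a common slope in the sum, and one must inspect the vertices along that common edge to keep the matching between branches and compact faces injective. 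Once this bookkeeping is arranged, summing over $i=1,\dots,r$ delivers the claimed upper bound $r\leq\#\{\text{compact faces of }\Gamma_\LL(f)\}$.
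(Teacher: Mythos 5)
The paper offers no proof of this proposition, stating only that the four items are ``an immediate consequence of the definitions,'' so there is nothing to compare your argument against; I will simply assess it.

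Items (3) and (4) are correct as you argue (for (4), the $v_i$ are indeed monomial valuations associated with the rays of the toric resolution, so each $v_i(f)$ is the minimum of a linear functional over $N_\LL(f)$ and enlarging the support can only lower the minimum). For (1) note that the paper's statement, read literally at the level of the Newton \emph{diagram} $N_\LL$, is false: in $X(d;1,1)$ take $f_1=x+y$, $f_2=x-y\in\cO_X(1)$; then $(1,1)\in N_\LL(f_1)\oplus N_\LL(f_2)$ but $(1,1)\notin N_\LL(f_1f_2)=\{(2,0),(0,2)\}$. You correctly read (1) at the level of Newton \emph{polygons} $\Gamma_\LL$, where your vertex argument (a vertex of a Minkowski sum has a unique decomposition, so no cancellation can occur there) does give the reverse inclusion; that is the version the paper actually uses.

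The genuine gap is in (2), and it sits exactly where you flag it, but it is not mere bookkeeping: with the standard reading of ``compact face'' (vertices plus maximal compact edges) the injective matching of branches to faces simply cannot be achieved, and the stated inequality is false. For example $f=(x-\lambda_1 y)(x-\lambda_2 y)(x-\lambda_3 y)(x-\lambda_4 y)\in\cO_X(4)$ (in $X(d;1,1)$, $d>4$, or just in $\CC^2$) has four irreducible branches, while $\Gamma_\LL(f)$ has a \emph{single} maximal compact edge. To make (2) true one must interpret ``compact face'' as a \emph{primitive $\LL$-lattice segment} of the compact boundary, so that a maximal edge of $\LL$-lattice length $m$ counts $m$ times. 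Under that reading your ``inspect the vertices along the common edge'' is exactly the right idea, but it needs to be made precise: in the Minkowski sum from (1), the $\LL$-lattice lengths of parallel compact edges add, and each irreducible branch contributes lattice length at least one (the segment joining the two extremal lattice points of its eigenmodule), which is what makes the branch-to-segment assignment injective even when tangent directions coincide. As written, your proof leaves the step you yourself identify as the main obstacle unresolved, and with the maximal-edge interpretation you have implicitly adopted, the statement you set out to prove is not even correct.
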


As a consequence one obtains the following interpretation of generic $\Q$-divisors.

\begin{prop}\label{prop-Nk} 
If $f\in \cO_X(k)$ is a generic $\Q$-divisor, then $\Gamma_\LL(f)=\LL(k)$.
\end{prop}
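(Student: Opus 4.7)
The plan is to argue by contradiction using Properties~\ref{proper-newton}(\ref{proper-newton-3}) and~\ref{proper-newton}(\ref{proper-newton-4}). Suppose $f \in \cO_X(k)$ is generic but $\Gamma_\LL(f) \neq \LL(k)$, where the right-hand side is understood as the convex hull of the lattice coset $\LL(k)$. Since $\Gamma_\LL(f)$ is automatically contained in this convex hull, the failure of equality produces a lattice point $p_0 = (r_0, s_0) \in \LL(k) \setminus \Gamma_\LL(f)$. Property~\ref{proper-newton}(\ref{proper-newton-3}) supplies some $g_0 \in \cO_X(k)$ with $p_0 \in N_\LL(g_0)$, and I will form $h := f + \epsilon g_0$ for a generic scalar $\epsilon \in \CC^*$, ensuring no cancellation. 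Then $N_\LL(h) = N_\LL(f) \cup N_\LL(g_0) \supsetneq N_\LL(f)$, and Property~\ref{proper-newton}(\ref{proper-newton-4}) yields $v(h) \leq v(f)$ componentwise.

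The crucial move is to promote this to a strict inequality in some coordinate, which will contradict the minimality of $v(f)$. For this I will use the explicit description---derivable by iterating the transformation in~\eqref{eq-2form} through the Hirzebruch-Jung chain of weighted blow-ups---that each valuation $v_i$ is a weighted minimum over the Newton diagram:
\[
v_i(h) = \tfrac{1}{d}\,\min\bigl\{\alpha_i\, r + \beta_i\, s : (r, s) \in N_\LL(h)\bigr\}
\]
for positive weights $(\alpha_i, \beta_i)$ encoding the $i$-th weighted blow-up of the resolution. Because $p_0 \notin \Gamma_\LL(f)$, it lies strictly on the outer side of at least one supporting hyperplane of $\Gamma_\LL(f)$.

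The heart of the argument---and the main obstacle---is to show that such a supporting hyperplane can always be chosen to have normal direction in the set $\{(\alpha_i, \beta_i) : 1 \leq i \leq n\}$. Once this correspondence between the compact edges of the convex hull of $\LL(k)$ and the Hirzebruch-Jung weight vectors is in place, the resulting index $i$ satisfies $\alpha_i\, r_0 + \beta_i\, s_0 < d\, v_i(f)$, whence $v_i(h) \leq \tfrac{1}{d}(\alpha_i\, r_0 + \beta_i\, s_0) < v_i(f)$, giving the desired contradiction. The edge-normal correspondence is a toric-geometric feature of the Hirzebruch-Jung resolution of $X(d;1,q)$: the primitive rays of the refined fan in the dual lattice align with the inward normals of the compact edges of the Newton polygon attached to $\LL$, and making this link explicit will draw on the combinatorics of $[c_1,\ldots,c_n]$ and the auxiliary sequences $(q_i, \bar q_i)$ developed in Section~\ref{sec-arithmetics}.
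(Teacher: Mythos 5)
The overall shape of your argument---argue by contradiction, adjoin a monomial supported at a missing point $p_0\in\LL(k)\setminus\Gamma_\LL(f)$, and appeal to minimality of $v(f)$---is sensible, and the first half (constructing $h=f+\epsilon g_0$ with $N_\LL(h)\supsetneq N_\LL(f)$ and invoking Property~\ref{proper-newton}(\ref{proper-newton-4}) to get $v(h)\leq v(f)$) is unobjectionable. But the proof, as written, stops exactly where the real content begins, and the step you defer is not merely a technicality.

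The gap is the asserted formula $v_i(h)=\tfrac{1}{d}\min\{\alpha_i r+\beta_i s : (r,s)\in N_\LL(h)\}$ together with the claim that the weight vectors $(\alpha_i,\beta_i)$ are exactly the inward normals of the compact edges of $\Gamma(\LL(k))$. This is the identity that produces the strict drop $v_i(h)<v_i(f)$, and you yourself label it ``the heart of the argument and the main obstacle.'' Nothing in the paper up to this point establishes it, so as it stands the proof is incomplete. Worse, the formula is suspect under the paper's own conventions: by Remark~\ref{rem-Xdecomp} the vector $\alpha(f)=[0,dv_1(f),\dots,dv_n(f),0]$ satisfies $\|\alpha(f)\|_X\equiv k\bmod d$, and one checks (e.g.\ in $X(5;1,2)$) that this forces $v_i$ of the $E_i$-curvette $x^{q_i}-y^{\bar q_i}$ to be $\tfrac{1}{d}\delta_{ij}$, which is incompatible with $v_j$ being a positive-weighted minimum of the Newton diagram (such a minimum cannot vanish on monomials with strictly positive exponents). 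So the $v_i$ in Section~\ref{sec-curvettes} are not the divisorial multiplicities your formula computes; they are instead tied to the intersection of the \emph{strict transform} with $E_i$, which is governed by the lattice length of the face of slope $-\bar q_i/q_i$, not by a supporting-hyperplane minimum.

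This matters for the logic as well as the bookkeeping. If one does use the divisorial (weighted-min) valuations, then in $X(5;1,2)$ with $k=3$ the germ $g=x^3+y^4$ and the generic product of curvettes $(x^2-y)(x-y^3)$ have the same valuation vector $(\tfrac{3}{5},\tfrac{4}{5})$, yet $\Gamma_\LL(g)$ strictly omits the lattice point $(1,1)\in\LL(3)$. So under the reading your formula presupposes, the perturbation $h=g+\epsilon\,xy$ gives $v(h)=v(g)$ with no strict drop, and your contradiction never arrives. To repair the argument you need to work with the paper's actual valuations and show that enlarging $N_\LL(f)$ past the convex hull of $\LL(k)$ necessarily lowers some $v_i$---equivalently, you need the correspondence between the compact edge of slope $-\bar q_i/q_i$ and the lattice length contributing to $v_i$, in the spirit of Lemma~\ref{lemma-qi} and Remark~\ref{rem-Xdecomp}, not the toric edge-normal picture you sketch. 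As it stands the crucial step is missing, and the surrogate you propose in its place is built on the wrong valuation.
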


In~\cite{CMO12} we extended the concept of Milnor fiber and Milnor number of a curve singularity 
allowing the ambient space to be a quotient surface singularity. 
Alternative generalizations of Milnor numbers can be found, for instance, 
in~\cite{brasselet-milnor,STV-Milnornumbers,Le-Someremarks}.
The Milnor number proposed here seems natural for surfaces and allows for a generalization of the local 
$\delta$-invariant and can be described in terms of a $\QQ$-resolution of the curve singularity.

\begin{dfn}[\cite{CMO12}]
Let $D=\{f=0\} \subset X(d;1,q)$ be a reduced Weil divisor. 
The \emph{Milnor fiber} $(f_t)_X$ of $D$ is defined as follows,
$$
(f_t)_X:=\{ f=t\}/{\qa_d}.
$$
The \emph{Milnor number} $\mu_X$ of $D$ is defined as
$$
\mu_X:=1-\chi^{\orb}(f_t)_X.
$$
Define the \emph{delta invariant} $\delta_X$ of $f$ as the rational number verifying 
\begin{equation*}
\chi^{\orb}(f_t)_X=r_X(f) - 2 \delta_X(f),
\end{equation*}
where $r_X(f)$ is the number of local branches of $D$ at $0$
and $\chi^{\orb}(f_t)_X$ denotes the orbifold Euler characteristic of~$(f_t)_X$.
\end{dfn}

\begin{rem}
\label{rem-delta1}
Note that, with this definition, $\delta_X(u)=0$ for $u\in \cO^*_X$ a unit in the ring of functions on~$X$.
\end{rem}

The following formula for the $\delta$-invariant of the product will be useful in the future.

\begin{lemma}[{\cite[Corollary 4.8]{CMO12}}]
\label{lem-deltafg}
For any $f,g$ reduced quasi-invariant $\Q$-divisor on~$X$, the following holds
$$\delta_X(fg) = \delta_X(f) + \delta_X(g) + (f,g)_X,$$
where $(f,g)_X$ denotes the intersection multiplicity of $f$ and $g$ in~$X$.
\end{lemma}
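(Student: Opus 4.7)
The plan is to reduce the equality to a topological statement about orbifold Euler characteristics. By the defining relation $\chi^{\orb}((fg)_t)_X = r_X(fg) - 2\delta_X(fg)$ and the additivity $r_X(fg) = r_X(f) + r_X(g)$ of the number of local branches (using that $fg$ is reduced, so $f$ and $g$ share no common component), the claimed identity is equivalent to
\begin{equation*}
\chi^{\orb}((fg)_t)_X = \chi^{\orb}(f_t)_X + \chi^{\orb}(g_t)_X - 2(f,g)_X.
\end{equation*}
Thus the task becomes reading off the orbifold Euler characteristic of the Milnor fiber of the product from those of $f$ and $g$ together with their intersection multiplicity.

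The natural approach is to work first on the smooth cover $\rho:\CC^2\to X$ with lifts $F = f\circ\rho$ and $G = g\circ\rho$, and to decompose the Milnor fiber $M = \{FG = t\}\cap B_\epsilon$ for generic small $t$. Fix a small $\eta > 0$ and split $M$ into $M_F := M\cap\{|G|\geq \eta\}$, $M_G := M\cap\{|F|\geq \eta\}$, and the intermediate piece $M_0 := M\cap\{|F|<\eta,\ |G|<\eta\}$. On $M_F$ the relation $G = t/F$ exhibits a unit perturbation of the equation $F=0$, so $M_F$ deformation-retracts onto the Milnor fiber of $F$; symmetrically for $M_G$. For generic $t$, the intermediate piece $M_0$ is a disjoint union of cylinders, one per point of the generic fiber of the map $(F,G)$, yielding $(F,G)_{\CC^2,0}$ connecting tubes. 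Additivity of $\chi$, together with the fact that each such cylinder replaces a disk in $M_F$ and one in $M_G$, gives the classical formula in $\CC^2$.

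The orbifold version is obtained by carrying out the same decomposition $\qa_d$-equivariantly: since $|F|$ and $|G|$ are $\qa_d$-invariant, the pieces $M_F$, $M_G$, $M_0$ are preserved by the action, and the orbifold Euler characteristic is additive on such decompositions. Each equivariant cylinder still contributes zero, and the number of $\qa_d$-orbits of connecting cylinders matches $(f,g)_X$ by the very definition of the intersection multiplicity in $X$ (namely $(F,G)_{\CC^2,0}/d$ with the orbit stabilizers suitably weighted). Combined with the retraction statements for $M_F/\qa_d$ and $M_G/\qa_d$ onto $f_t$ and $g_t$, this produces the Euler-characteristic identity displayed above.

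The main obstacle is the equivariant bookkeeping: one must verify that the retraction of $M_F$ onto the Milnor fiber of $F$ descends to a homotopy equivalence on quotients, and that the equivariant count of intersection tubes returns exactly $(f,g)_X$ rather than a rational multiple of it. Both issues concern the behavior of the decomposition near branches with nontrivial stabilizer in $\qa_d$, where the local orbifold structure contributes fractional factors to $\chi^{\orb}$ that must cancel correctly on both sides. Once this equivariant compatibility is established, the topological computation outlined above gives the formula, recovering~\cite[Corollary 4.8]{CMO12}.
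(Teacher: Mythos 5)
Note that the paper itself does not prove this lemma; it is quoted from \cite[Corollary 4.8]{CMO12}, so there is no in-paper argument to compare against, and I assess your proposal on its own terms. Your overall plan (reduce to an Euler-characteristic identity, establish it on the smooth cover $\CC^2$, then descend) is natural. However, in the smooth step the claim that $M_F = M\cap\{|G|\geq\eta\}$ deformation-retracts onto the Milnor fiber of $F$ is not quite right: it retracts onto the Milnor fiber of $F$ with $(F,G)_{\CC^2,0}$ small discs removed (one per point of the generic fiber of $(F,G)$ near the origin), so that $\chi(M_F)=\chi(\{F=s\})-(F,G)_{\CC^2,0}$. You gesture at this by saying each cylinder "replaces a disk in $M_F$," but the retraction statement as written would double-count --- the discs are absent from $M_F$, not replaced inside it. Once repaired, the additivity of $\chi$ over the decomposition gives $\chi(\{FG=t\})=\chi(\{F=s\})+\chi(\{G=s'\})-2(F,G)_{\CC^2,0}$.

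More importantly, the "main obstacle" you single out --- the equivariant bookkeeping --- is not where the substance lies. The orbifold Euler characteristic is multiplicative under finite quotients, $\chi^{\orb}(Y/\qa_d)=\chi(Y)/d$, so dividing the smooth identity above by $d$ and using $(f,g)_X=(F,G)_{\CC^2,0}/d$ yields the orbifold identity in one line; there is no need to count $\qa_d$-orbits of cylinders, nor do "fractional factors near branches with nontrivial stabilizer" enter --- those effects are absorbed automatically by multiplicativity. The one genuine subtlety, which your proposal does not address, is that $\{FG=t\}$ is not $\qa_d$-invariant when the class of $fg$ is nonzero in $\ZZ_d$, so the meaning of "$\{FG=t\}/\qa_d$" (and hence the applicability of multiplicativity) must first be reconciled with the paper's definition of $(f_t)_X$, e.g.\ by passing to the stabilizer of the fiber or to an invariant power of $f$. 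As it stands, the proposal is a plausible outline that misplaces the difficulty and leaves the actual subtle point untouched.
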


Consider $\overline{\LL}$ an integral lattice and $\overline{\LL}_p=p+\overline{\LL}\subset \ZZ^2$ an affine integral lattice. 
Denote by $d$ the absolute value of the determinant of a basis of $\overline{\LL}$, which is an invariant of $\overline{\LL}_p$. 
Let us denote by $\LL_p$ (resp.~$\LL$) the restriction of $\overline{\LL}_p$ (resp.~$\overline{\LL}$) to the first quadrant
$(\ZZ_{\geq 0})^2$. 
We say a polygon $N$ in $\LL_p$ is an \emph{$\LL$-Newton polygon} if $a+\LL\subset N$ for any $a\in N$. 
Moreover, we will say $N$ is \emph{convenient} if both $(\ZZ_{\geq 0}\times \{0\})\setminus N$ and 
$(\{0\}\times \ZZ_{\geq 0})\setminus N$ are finite. Note that $\LL_p$ itself is a convenient $\LL$-Newton polygon.
Given a convenient $\LL$-Newton polygon $N$ in $\LL$ one can consider $\Gamma(N)$ the convex hull of $N$.
Following Kouchnirenko's definition one can consider the \emph{$\LL_p$-Newton number} of a convenient $\LL$-Newton polygon $N$ 
in $\LL_p$ as
\begin{equation}
\label{eq-muN}
\mu_{\LL_p}(N)=2V_N-V_{1,N}-V_{2,N}+\mu_{\LL_p},
\end{equation}
where $V_N$ is the area of $\Gamma(\LL_p)\setminus \Gamma(N)$ divided by~$d$. Analogously, $V_{i,N}$ is the 
length of $\Gamma(\LL_p)\setminus \Gamma(N)$ on the hyperplane $x_i=0$ divided by $d$ (see Figure~\ref{fig-mu}). 

\begin{figure}[ht]
\begin{center}
\begin{tikzpicture}[scale=.5]
    % Draw axes
    \draw [<->,thick] (0,8) node (yaxis) [above] {$ $}
        |- (11,0) node (xaxis) [right] {$ $};
    \draw [thick] (10,0) -- (7,.5);
    \fill (7,.5) circle (2pt);
    \draw [dashed] (7,.5) -- (5,1);
    \fill (5,1) circle (2pt);
    \draw [thick] (5,1) -- (4.5,1.3);
    \draw [dashed] (4.5,1.3) -- (4,1.6);
    \draw [thick] (4,1.6) -- (3.5,1.9);
    \fill (3.5,1.9) circle (2pt) node [above right] {$N$};
    \draw [thick] (3.5,1.9) -- (2.5,2.9);
    \draw [dashed] (2.5,2.9) -- (1.5,3.9);
    \draw [thick] (1.5,3.9) -- (.5,5);
    \fill (.5,5) circle (2pt);
    \draw [dashed] (.5,5) -- (0,7);
    \draw [thick] (5,0) -- (2,.5);
    \fill (2,.5) circle (2pt) node [left] {$\mathbb{L}_p$};
    \draw [dashed] (2,.5) -- (.5,2);
    \fill (.5,2) circle (2pt) node [right] {$\quad V_N$};
    \draw [thick] (.5,2) -- (0,3.5);
\end{tikzpicture}
\caption{}
\label{fig-mu}
\end{center}
\end{figure}

The term $\mu_{\LL_p}$ needs a more 
careful explanation. Note that the convex hull $\Gamma(\LL_p)$ of $\LL_p$ does not necessarily contain the origin $\{x_1=x_2=0\}$. 
However, note that $\LL_p$ it must be a convenient $\LL_p$-Newton polygon. The invariant $\mu_{\LL_p}$ is defined as
\begin{equation}
\label{eq-muL}
\mu_{\LL_p}:=
\begin{cases}
-1 & \text{ if } \LL_p=\LL\\
\frac{d-1}{d}+\frac{\mu(\Gamma(\LL_p))}{d}=
1+\frac{2V-V_1-V_2}{d}&\text{ otherwise, }
\end{cases}
\end{equation}
where $\mu(\Gamma(\LL_p))$ denotes the standard Newton number of $\Gamma(\LL_p)$ and $V$, $V_1$, $V_2$ denote the standard 
volumes of the compact region under~$\Gamma(\LL_p)$.

In case $N=N_\LL(f)$ is the $\LL$-Newton diagram of a non-degenerate germ $f\in \cO_X(k)$, note that $\LL(k)$ is also
a convenient $\LL$-Newton polygon and one obtains immediately the 
following generalization of Kouchnirenko's Theorem~\cite{Kouchnirenko-Newton}.

\begin{thm}
\label{thm-mu}
If $N=N_\LL(f)$ is the $\LL$-Newton polygon of a non-degenerate germ $f\in \cO_X(k)$, then
$$
\mu_X(f)=\mu_{\LL(k)}(N).
$$
\end{thm}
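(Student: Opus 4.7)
My plan is to pull back via the quotient map $\rho\colon\CC^2\to X=X(d;1,q)$ and reduce to the classical two-variable Kouchnirenko theorem. By definition $(f_t)_X=\{f=t\}/\qa_d$, where $\{f=t\}\subset\CC^2$ is the ordinary Milnor fiber of $f$ regarded as a germ in $\cO_{\CC^2,0}$. Since the small cyclic action of $\qa_d$ on $\CC^2$ fixes only the origin and $(0,0)\notin\{f=t\}$ for $t\neq 0$, the action on the Milnor fiber is free, and therefore
$$\chi^{\orb}((f_t)_X)\ =\ \frac{\chi(\{f=t\})}{d}.$$

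Next I would invoke classical Kouchnirenko on $\CC^2$. Because the convex cone spanned by $\LL$ is all of $\RR^2_{\geq 0}$, the $\LL$-Newton polygon $\Gamma_\LL(f)$ coincides, as a subset of $\RR^2$, with the ordinary Newton polygon $\Gamma(f)$, and the non-degeneracy hypothesis on the compact faces of $\Gamma_\LL(f)$ is precisely classical Kouchnirenko non-degeneracy. This yields $\chi(\{f=t\})=1-\mu^{cl}(f)$ with $\mu^{cl}(f)=2V^{std}-V_1^{std}-V_2^{std}+1$, where $V^{std}$ and $V_i^{std}$ are the standard (unnormalised) area and axis-intercepts of the region under $\Gamma(f)$. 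Substituting gives $\mu_X(f)=\bigl(d-1+\mu^{cl}(f)\bigr)/d$.

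The remaining step is to match this expression with $\mu_{\LL(k)}(N_\LL(f))$. I would split the region below $\Gamma(f)$ as the disjoint union of the region below $\Gamma(\LL(k))$ and the region $\Gamma(\LL(k))\setminus\Gamma(N_\LL(f))$, obtaining $V^{std}=V+dV_N$ and $V_i^{std}=V_i+dV_{i,N}$, the factors of $d$ coming from the normalisations built into $V_N$ and $V_{i,N}$. Plugging this decomposition into $\mu_X(f)=(d-1+\mu^{cl}(f))/d$ and recognising $1+(2V-V_1-V_2)/d$ as $\mu_{\LL(k)}$ from~\eqref{eq-muL} recovers exactly the definition of $\mu_{\LL(k)}(N)$ in~\eqref{eq-muN}.

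The main technical obstacles lie in the boundary cases. Non-convenience of $\Gamma(f)$ forces one to use Kouchnirenko's corrections, or equivalently to argue by a perturbation that adds high-degree axis monomials and verifies that both sides of the equality are stable under this operation. The degenerate case $\LL(k)=\LL$ (i.e.~$k=0$), where $\Gamma(\LL)=\RR^2_{\geq 0}$, must also be handled separately because the prescribed value $\mu_\LL=-1$ does not appear as a limit of the formula used for $k\neq 0$; there the orbifold Euler characteristic has to be matched against the right-hand side by a direct computation.
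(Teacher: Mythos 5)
The paper itself offers no proof of Theorem~\ref{thm-mu}; it merely asserts that the statement is ``obtained immediately'' from classical Kouchnirenko. Your strategy---pull back along $\rho\colon\CC^2\to X$, invoke the planar Kouchnirenko formula, and relate the two Newton numbers by the region decomposition $\Gamma(\LL(k))\setminus\Gamma(N)$---is the natural way to substantiate this, and your algebra does yield $\mu_{\LL(k)}(N)=\bigl(d-1+\mu^{cl}(f)\bigr)/d$ for $k\neq 0$, which matches $1-\chi(\{f=t\})/d$. Two points, however, need to be addressed.

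First, your derivation of $\chi^{\orb}\bigl((f_t)_X\bigr)=\chi(\{f=t\})/d$ is flawed as written. For $k\neq 0$ the fibre $\{f=t\}$ is not $\qa_d$-stable: since $f$ is quasi-invariant of weight $k$, one has $\xi\cdot\{f=t\}=\{f=\xi^{k}t\}$, so only the subgroup $\{\xi\,:\,\xi^{k}=1\}\cong\qa_{\gcd(k,d)}$ acts on $\{f=t\}$, and the phrase ``the action on the Milnor fibre is free'' is vacuous. The formula $\chi^{\orb}=\chi/d$ is indeed the one required (it is the only one consistent with, e.g., $\delta_X=\tfrac{d-1}{2d}$ for a quasi-smooth branch), but it must be extracted from the precise orbifold definition of $(f_t)_X$ in~\cite{CMO12}, where $\{f=t\}/\qa_d$ carries generic inertia of order $d/\gcd(k,d)$; it cannot be obtained from a naive free quotient.

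Second, the case $k=0$ is not merely a case that ``has to be matched by a direct computation''---with the paper's stated convention, the direct computation \emph{contradicts} the theorem. For $k=0$ the group genuinely acts freely on $\{f=t\}$, so your argument applies verbatim and gives $\mu_X(f)=\bigl(d-1+\mu^{cl}(f)\bigr)/d$. On the other hand, for $\LL_p=\LL$ the polygon $\Gamma(\LL)$ is all of $\RR^2_{\geq 0}$, so $dV_N$, $dV_{1,N}$, $dV_{2,N}$ equal the classical areas and intercepts, and \eqref{eq-muN} with the prescribed value $\mu_\LL=-1$ from \eqref{eq-muL} gives $\mu_\LL(N)=\bigl(\mu^{cl}(f)-1\bigr)/d-1$, which is exactly $2$ less than $\mu_X(f)$. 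Concretely, for $X=X(d;1,1)$ and $f=x^{d}+y^{d}\in\cO_X(0)$ one finds $\mu_X(f)=d-1$ while $\mu_\LL(N_\LL(f))=d-3$. Either the convention in \eqref{eq-muL} should read $\mu_\LL=+1$, or the theorem must be restricted to $k\neq 0$; you should record this discrepancy rather than assert it can be resolved.
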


Also, using Pick's Theorem in~\eqref{eq-muN} one obtains $2V_N=B_N+2I_N-2$, where $B_N=V_{1,N}+V_{2,N}+\|k\|_1+r_N$ is 
the number of $\LL$-points on the boundary of $\Gamma(\LL_p)\setminus \Gamma(N)$,
$r_N$ is the number of compact segments in $N$, and $I_N$ is the number of $\LL$-points in the interior of 
$\Gamma(\LL_p)\setminus \Gamma(N)$, see Figure~\ref{fig-mu}. Therefore Theorem~\ref{thm-mu} can be written~as
\begin{equation}
\label{eq-mu2}
\mu_X(f)-\mu_{\LL(k)}=2I_N+\|k\|_1+r_N-2.
\end{equation}

The invariants $\K_{X}$ described in~\eqref{eq-k0} and $\delta_X$ can be combined to define an invariant of the 
Weil divisor class as follows.

\begin{propdef}[\cite{Ortigas13PhD,CMO14}]
\label{propdef-Delta}
Let $X=X(d;1,q)$ and $f\in \cO_X(k)$. Then $\R_X(f):=\delta_X(f)-\K_{X}(f)$ defines a map
$$\R_X:\Cl(X)\cong \qa_d\to \QQ,$$
that is, $\R_X(f)=\R_X(g)$ for any $f,g\in \cO_X(k)$.

Moreover, $\R_X(0) = 0$ and $\R_X(f) = \frac{d-1}{2d}$ if $f$ defines a quasi-smooth $\Q$-divisor.
\end{propdef}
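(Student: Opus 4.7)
The plan is to separate the statement into its three assertions: (i) well-definedness of $\R_X$ as a map from $\Cl(X)\cong \qa_d$ to $\QQ$, (ii) the vanishing $\R_X(0)=0$, and (iii) the value $\frac{d-1}{2d}$ on quasi-smooth classes. The heart of the matter is (i), and the most natural route I would take is via the multiplicative behavior of both $\delta_X$ and $\K_X$.

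For well-definedness, I aim to show that if $f,g\in \cO_X(k)$ are reduced germs with isolated singularities then $\delta_X(f)-\K_X(f)=\delta_X(g)-\K_X(g)$. Lemma~\ref{lem-deltafg} already gives $\delta_X(fg)=\delta_X(f)+\delta_X(g)+(f,g)_X$. I would establish an analogous identity
$$
\K_X(fg)=\K_X(f)+\K_X(g)+(f,g)_X
$$
by induction on the Hirzebruch-Jung resolution, using Proposition~\ref{prop-KP-blowup}: after a $(1,q)$-weighted blow-up the contribution splits into a toric term $\K_\pi$, which by the monomial counting in Lemma~\ref{lemma-ij} behaves additively in $\nu_{p,q}(fg)=\nu_{p,q}(f)+\nu_{p,q}(g)$ modulo an intersection-multiplicity correction, plus contributions at the infinitely near points which can be handled inductively. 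Subtracting from Lemma~\ref{lem-deltafg} gives $\R_X(fg)=\R_X(f)+\R_X(g)$. Constancy on $\cO_X(k)$ then follows by a standard trick: multiply both $f$ and $g$ by a common sufficiently generic $h\in \cO_X(d-k)$ so that $fh,gh\in \cO_X(0)$, where the two invariants can be matched on the minimal resolution; the additive identity transports this equality back to $f$ and $g$.

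For assertion (ii), I would take $f$ a generic unit in $\cO_X^*=\cO_X^*(0)$, that is, a germ not vanishing at the singular point. By Remark~\ref{rem-delta1} one has $\delta_X(f)=0$, and since $f\frac{dx\wedge dy}{f}=dx\wedge dy$ extends holomorphically through the resolution, $\cM^{\nul}_f(w)=\cO_X(w)$ and hence $\K_X(f)=0$. For (iii), a quasi-smooth divisor has a representative of the form $x$, or more generally $x^{q_i}-\lambda y^{\bar q_i}$ from the factors appearing in Theorem~\ref{thm-main-generic} with a single branch. Here both $\delta_X$ and $\K_X$ are computable directly: Theorem~\ref{thm-Delta} gives $\delta_X$ via the recursive $(1,q)$-weighted blow-up formula, and $\K_X$ reduces to $\|k\|_1-1=0$ since the greedy decomposition of such a minimal class has a single nonzero entry equal to $1$. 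The arithmetic identity $\delta_X-\K_X=\frac{d-1}{2d}$ then falls out of a short continued-fraction calculation with the Hirzebruch--Jung data $[c_1,\dots,c_n]$.

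The main obstacle I anticipate is the multiplicative formula for $\K_X$. Unlike $\delta_X$, which admits a clean curve-theoretic interpretation in terms of the normalization and intersection numbers, $\K_X$ is defined cohomologically through the submodule $\cM^{\nul}_f$ of the LR-logarithmic eigenmodule, and showing the extra $(f,g)_X$ term arises correctly requires a delicate analysis of how logarithmic poles superimpose when two germs share branches on the exceptional divisor. A careful bookkeeping via the explicit pull-back formula~\eqref{eq-2form} at each step of the resolution, together with the monomial dimension count of Proposition~\ref{prop-KP-blowup}, should make this tractable but is the technical core of the argument.
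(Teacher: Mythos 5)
The well-definedness argument rests on the claimed identity $\K_X(fg)=\K_X(f)+\K_X(g)+(f,g)_X$, and this identity is false. Combined with Lemma~\ref{lem-deltafg} it would give $\R_X(fg)=\R_X(f)+\R_X(g)$, i.e.\ that $\R_X:\Cl(X)\cong\ZZ_d\to\QQ$ is additive, but the very statement you are proving rules this out: in $X=X(2;1,1)$ one has $\Cl(X)\cong\ZZ_2$, a quasi-smooth divisor in degree~$1$ (e.g.\ $x$) gives $\R_X(1)=\frac{d-1}{2d}=\frac14$, yet $\R_X(1+1)=\R_X(0)=0\neq\frac12=2\R_X(1)$. (The paper's own computation $\R_X(2D)=\frac{\bar q-1}{d}\neq 2\cdot\frac{d-1}{2d}$ for a quasi-smooth $D$ says the same thing.) There is also a direct parity obstruction: $\K_X$ is by definition a $\CC$-dimension, hence an integer, so $\K_X(fg)-\K_X(f)-\K_X(g)\in\ZZ$, whereas $(f,g)_X$ is the quotient intersection number, typically a non-integer rational (already $(x,y)_X=\frac1d$). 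So your central lemma cannot hold, and with it the additivity of $\R_X$ and the ``common multiplier'' step both collapse. That step is moreover circular as written: transporting the equality from $fh,gh\in\cO_X(0)$ back to $f,g$ presupposes that $\R_X$ is already known to be constant on $\cO_X(0)$, which is assertion~(ii), itself part of what is to be shown.

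Two smaller points. For~(iii), the germs $x^{q_i}-\lambda y^{\bar q_i}$ are curvettes, not quasi-smooth $\Q$-divisors in general: quasi-smoothness requires the lift to $(\CC^2,0)$ to be nonsingular, which forces $q_i=1$ or $\bar q_i=1$, so only $i=1$ or $i=n$ qualify. Finally, note that the paper does not prove Proposition-Definition~\ref{propdef-Delta}; it is recalled from \cite{Ortigas13PhD,CMO14}, where well-definedness is obtained by an argument that does not rely on additivity (essentially because the other terms of the Numerical Adjunction Formula~\eqref{eq-adjformula} depend only on the class $k$). Any corrected proof here must likewise avoid treating $\R_X$ or $\K_X$ as additive under products.
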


For the definition of quasi-smooth divisors we refer to~\cite{Dolgachev82}.

Assuming $D=\{f=0\}$, where $f\in \cO_X(k)$, the isomorphism $\Cl(X)\cong \ZZ_d$ will allow us to 
write $\R_X(f)$, $\R_X(D)$ or simply $\R_X(k)$ depending on the context.

The map $\R_X$ is equivalent to the map $R_X$ described in~\cite[p.312]{Blache-RiemannRoch}.
More precisely, $R_X(k) = -\R_X(-k)$ holds. 
We expect that our approach will allow us to solve some of the conjectures stated there.

Also note that $\R_X:\Cl(X) \to \QQ$ characterizes a normal surface singularity in the following sense.

\begin{prop}
Let $D \in \Cl(X)$ be a quasi-smooth Weil divisor in a surface singularity $X$ and 
denote by $d_k:=\R_X(kD)$. Then, $d_0=0$ and
$$
X\cong X(d;1,q),
$$
where $d:=\frac{1}{1-2d_1}$ and $q:=dd_2+1$.
In particular, $\R_X(D)$ and $\R_X(2D)$ characterize the quotient singularity~$X$.
\end{prop}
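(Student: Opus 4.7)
The plan splits into three parts, the first two being short bookkeeping and the third carrying the real content.

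First, $d_0=\R_X(0)=0$ is immediate from Proposition-Definition~\ref{propdef-Delta}, and since $D$ is quasi-smooth, the same Proposition-Definition gives $d_1=\tfrac{d-1}{2d}$, which inverts directly to $d=\tfrac{1}{1-2d_1}$.

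Next, fix a presentation $X\cong X(d;1,q)$. A quasi-smooth Weil divisor is defined by a quasi-invariant germ with non-zero linear part, and the only quasi-invariant linear forms on $\CC^2$ are scalar multiples of $x$ (class~$1$) or $y$ (class~$q$). Hence $[D]\in\Cl(X)\cong \ZZ_d$ must be either $1$ or~$q$, and after possibly swapping coordinates---which replaces $q$ by $q^{-1}\mod d$ and yields the same singularity---I may assume $[D]=1$, so that $[2D]=2$ and $d_2=\R_X(2)$.

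The core step is the explicit computation of $\R_X(2)$. By Theorem~\ref{thm-Delta}, $\R_X(2)=\delta_X(f)-\K_X(f)$ for a generic $f\in\cO_X(2)$. For $q\geq 3$, the \greedy $X$-decomposition of $2$ forces $k_n=2$ (since $q_n=1$ and $q_1=q>2$), and Theorem~\ref{thm-main-generic} then identifies $f=(x-\lambda_1 y^{\bar q})(x-\lambda_2 y^{\bar q})$ with $\bar q\equiv q^{-1}\mod d$, as $y^{\bar q}$ must lie in $\cO_X(1)$. Each factor is a smooth divisor of class~$1$, so by the quasi-smooth case of Proposition-Definition~\ref{propdef-Delta} combined with $\K_X=\|1\|_1-1=0$, its $\delta_X$ equals $\tfrac{d-1}{2d}$. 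Lemma~\ref{lem-deltafg} and the orbifold intersection $(f_1,f_2)_X=\tfrac{\bar q}{d}$ then give $\delta_X(f)=\tfrac{d-1+\bar q}{d}$; combined with $\K_X(f)=\|2\|_1-1=1$ this yields $\R_X(2)=\tfrac{\bar q-1}{d}$, which inverts to $\bar q=dd_2+1$. The degenerate cases $q=1,2$ (where the \greedy decomposition uses $q_1$ directly and $f$ becomes either a pair of transverse lines or a smooth conic) are checked separately and conform to the same formula. Setting $q:=dd_2+1$ then identifies $X\cong X(d;1,\bar q)\cong X(d;1,q)$.

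The main obstacle is this last step, where several ingredients must be combined: pinning down the explicit form of the generic~$f$ from Theorem~\ref{thm-main-generic} in each regime of $q$, computing the orbifold intersection $(f_1,f_2)_X$ correctly (with the $\tfrac{1}{d}$ factor arising from the quotient by~$\qa_d$), and verifying uniformity of the formula in the low-$q$ cases where the generic element changes shape.
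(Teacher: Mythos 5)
Your first two paragraphs match the paper's argument. The core step, however, has a genuine gap: the claim that for $q\geq 3$ the \greedy $X$-decomposition of $2$ ``forces $k_n=2$'' is false. The greedy algorithm picks the largest $q_i\leq 2$; whenever $q_{n-1}=2$ (i.e., $2$ appears in the Hirzebruch--Jung sequence $\mathbf q$), the decomposition is $[2]=[0,\dots,0,1,0,0]$ with $\|2\|_1=1$, and the generic germ of class $2$ is the single curvette $x^2-\lambda y^{\bar q_{n-1}}$ rather than a product of two smooth branches. A concrete example is $X(14;1,11)$, where $\mathbf q=[14,11,8,5,2,1,0]$ and $q_4=2$. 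In such cases the germ $f=(x-\lambda_1 y^{\bar q})(x-\lambda_2 y^{\bar q})$ is \emph{not} generic, so Lemma~\ref{lemma-K-generic} cannot be applied to it, and the chain $\K_X(f)=\|2\|_1-1=1$ breaks (indeed $\|2\|_1=1$ there).

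The paper avoids this by never claiming that $f=(x+y^{\bar q})(x-y^{\bar q})$ is generic: it only uses that $\R_X$ depends solely on the divisor class (Proposition-Definition~\ref{propdef-Delta}), and then computes $\K_X(f)$ directly from the blow-up formula of Proposition~\ref{prop-KP-blowup} with weights $(1,\bar q)$. That gives
$$
\K_X(f)=\#\{(i,j)\mid i,j\geq 1,\ \bar q i+j\leq 2\bar q,\ \bar q i+j\equiv 2\bar q \bmod d\},
$$
a set with the single element $(1,\bar q)$, hence $\K_X(f)=1$ unconditionally. Your computation of $\delta_X(f)$ via Lemma~\ref{lem-deltafg} agrees with the paper's and is correct; the fix is to replace the appeal to genericity by this direct count. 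Note also that the truly delicate cases are those with $q_{n-1}=2$, not the cases $q=1,2$ you singled out, which the uniform formula already handles.
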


\begin{proof}
Since $\R_X(0)$ and $\R_X(D)$ have already been discussed in Proposition-De\-fi\-ni\-tion~\ref{propdef-Delta},
let us calculate $\R_X(2D)$ in $X=X(d;1,q)$. Since $X(d;1,q)$ and $X(d;1,\bar q)$ are both the same quotient space,
replacing $q$ by $\bar q$ if necessary, one can assume that $D \in \cO_X(1)$. According to the definition
$\R_X(2D) = \delta_X(f) - \K_X(f)$ where $f$ is any element in $\mathcal{O}_X(2)$.
For instance consider $f=(x+y^{\bar{q}})(x-y^{\bar{q}})$. Then Lemma~\ref{lem-deltafg} implies
$$
  \delta_X(f) = 
%  \frac{2(2-1-q+d)}{2dq}=\frac{d+1-q}{dq}
  \frac{d-1}{2d} + \frac{d-1}{2d} + \frac{\bar{q}}{d},
$$
since the $\delta$-invariant of a quasi-smooth divisor is $\frac{d-1}{2d}$ (\cite[Remark 4.7]{CMO12}) 
and their multiplicity of intersection is $\frac{\bar{q}}{d}$. In order to compute $\K_X(f)$, let us blow-up the origin of 
$X(d;1,q)$ with weights $(1,\bar{q})$. In this case the blow-up is a $\mathbb{Q}$-resolution and 
hence Proposition~\ref{prop-KP-blowup} tells us that
$$
\K_X(f) = \# \{ (i,j) \mid i,j \geq 1, \ 2 \bar{q} \geq \bar{q} i + j \equiv 2 \bar{q} \!\! \mod d \}.
$$
There is only one point in this set, namely $(1,\bar{q})$. Therefore $\R_X(2D) = \frac{\bar{q}-1}{d}$.
%The claim is a consequence of the fact that $X(d;1,q)$ and $X(d;1,\bar{q})$ are both the same quotient space.
\end{proof}

\section{Arithmetics for generic \texorpdfstring{$\Q$}{Q}-divisor}
\label{sec-arithmetics}
In this section we will define the basic arithmetic data associated with the cyclic quotient singularity $X$
and we will describe a generic germ for a given divisor class~$k$. This will be useful to describe the 
invariant~$\R_X(k)$.

% $f=\prod_{i=1}^{n} \prod_{j=1}^{k_i} (x^{q_i}-\lambda_j y^{\bar q_i})\in \cO_X(k)$

\subsection{Further numerical properties of quotient surface singularities}
\label{sec-num-proper}
Consider $X=X(d;1,q)$ a cyclic quotient surface singularity and 
$\mathbf{q}=[q_0=d,q_1=q,q_2,...,q_{n-1},q_n=1,q_{n+1}=0]$,
$\mathbf{c}=[c_0=2,c_1,c_2,...,c_{n-1},c_n=q_{n-1},c_{n+1}=2]$
as described in Section~\ref{subsec-wblowup} such that $q_{i-1}=c_iq_{i}-q_{i+1}$ for $i=1,\dots,n$ (the remaining
coefficients $q_{n+1},c_0,c_{n+1}$ are defined for convenience). Note that $c_i\geq 2$ for all $0\leq i\leq n+1$.
Define $\bar q_i$ as the smallest positive integer such that $q_1\bar q_i\equiv q_i \mod d$.
This way one defines 
$\mathbf{\bar q}=[\bar q_0=0,\bar q_1=1,\bar q_2,...,\bar q_{n-1},\bar q_n,\bar q_{n+1}=d]$.

We will describe some useful properties relating $\mathbf{q}$, $\mathbf{\bar q}$, and $\mathbf{c}$ which 
will be used in the upcoming sections.

\begin{lemma}
\label{lemma-q}
Let $X_i = X(q_i;1,q_{i+1})$ and $(\bar{q}_{X_i})_j$ denote the corresponding $\bar{q}_j$ associated with
the space $X_i$. Under the conditions above one has the following:
\begin{enumerate}
\item \label{lemma-q-1} $\bar{q}_i = d q_i \left( \frac{1}{q_0 q_1} + \cdots + \frac{1}{q_{i-1} q_{i}} \right)$,
\item \label{lemma-q-2} $\bar{q}_i = c_{i-1} \bar{q}_{i-1} - \bar{q}_{i-2}$,
\item \label{lemma-q-3} $\bar{q}_j q_i - q_j \bar{q}_i = d \cdot (\bar{q}_{X_i})_{j-i}$, $\forall j \geq i$,
\item \label{lemma-q-4} $q_i = q \bar{q}_i - d \cdot (\bar{q}_{X_1})_{i-1}$,
\item \label{lemma-q-5} $\bar{q}_{i+1} q_i - q_{i+1} \bar{q}_i = d$,
\item \label{lemma-q-6} $(\bar{q}_{X_i})_{j} (\bar{q}_{X_{i+1}})_{j} - (\bar{q}_{X_i})_{j+1}(\bar{q}_{X_{i+1}})_{j-1}= 1$.
\end{enumerate}
\end{lemma}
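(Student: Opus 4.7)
The plan is to establish the six identities in sequence, so that later ones can reuse earlier ones, in the order (2), (1), (3), (4)--(5), (6).

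First I would prove (2) directly from the definition of $\bar q_i$. Using the Hirzebruch--Jung recursion in the form $q_i = c_{i-1}q_{i-1}-q_{i-2}$, the quantity $c_{i-1}\bar q_{i-1}-\bar q_{i-2}$ satisfies the same congruence modulo $d$ as $\bar q_i$, so the two agree up to a multiple of $d$. A short induction using $c_j\geq 2$ (so that $\bar q_j$ is strictly increasing) verifies that it also lies in the correct range $(0,d]$, with equality only at $i=n+1$, and hence the two coincide. Then (1) follows by induction on $i$: substituting the inductive hypothesis and (2) into a telescoping sum with $S_k:=\sum_{l=1}^{k}\frac{1}{q_{l-1}q_l}$ collapses, after using $c_{i-1}q_{i-1}-q_{i-2}=q_i$ and $S_{i-1}-S_{i-2}=\frac{1}{q_{i-2}q_{i-1}}$, to $\bar q_i = d\,q_i\,S_i$. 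One should note that (1) is valid only for $0\leq i\leq n$, since $q_{n+1}=0$ makes the right-hand side vanish.

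The relation (3) is obtained by comparing (1) applied to $X$ with (1) applied to the subspace $X_i=X(q_i;1,q_{i+1})$, whose Hirzebruch--Jung continued fraction is $[c_{i+1},\ldots,c_n]$ and whose role of $d$ is played by $q_i$. Subtracting the two telescoping expressions leaves
\[
\bar q_j q_i - q_j \bar q_i = d\, q_i q_j\sum_{k=i+1}^{j}\frac{1}{q_{k-1}q_k},
\]
and the sum equals $(\bar q_{X_i})_{j-i}/q_i$ by (1) in $X_i$. Items (4) and (5) are then immediate specializations of (3): (4) comes from taking $i=1$ and using $\bar q_1=1$, and (5) from taking $j=i+1$ and using $(\bar q_{X_i})_1=1$.

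Finally, (6) follows from applying (3) inside $X_i$ with indices $i'=1$ and $j'=j,\,j+1$, obtaining
\[
q_i(\bar q_{X_{i+1}})_{j-1}=q_{i+1}(\bar q_{X_i})_j-q_{i+j},\qquad q_i(\bar q_{X_{i+1}})_{j}=q_{i+1}(\bar q_{X_i})_{j+1}-q_{i+j+1}.
\]
Substituting these into $q_i\bigl[(\bar q_{X_i})_j(\bar q_{X_{i+1}})_j-(\bar q_{X_i})_{j+1}(\bar q_{X_{i+1}})_{j-1}\bigr]$ causes the $q_{i+1}$ terms to cancel, leaving $(\bar q_{X_i})_{j+1}q_{i+j}-q_{i+j+1}(\bar q_{X_i})_j$, which by (5) applied to $X_i$ is exactly $q_i$. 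Dividing by $q_i$ gives (6). The main obstacle is the bookkeeping of the multiply-nested indices between $X$, $X_i$, and $X_{i+1}$; the structure is transparent once (1) and (3) are available, but the boundary cases ($i=0$, $i=n$, $i=n+1$) and the degenerate value $q_{n+1}=0$ require separate verification at each step.
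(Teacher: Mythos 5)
Your overall organization — prove (2) first from the definition of $\bar q_i$, derive the closed form (1) from it, then obtain (3)--(6) by comparing (1) across the sub\-singularities $X_i$ — runs in the opposite direction from the paper's. The paper \emph{defines} $\overline{Q}_i$ by the closed form of item (1), observes that (2)--(6) hold for the $\overline{Q}_i$ by direct algebra, deduces $q_1\overline{Q}_i\equiv q_i \bmod d$, and then identifies $\overline{Q}_i=\bar q_i$ by proving the bound $\overline{Q}_i<d$ via a telescoping estimate based on $q_j\ge q_i+(i-j)$. Your derivations of (1) and (3)--(6) are essentially right (a small slip: the sum in your displayed formula for (3) equals $(\bar q_{X_i})_{j-i}/(q_iq_j)$, not $(\bar q_{X_i})_{j-i}/q_i$, though the conclusion is unaffected), and you correctly flag that (1) degenerates at $i=n+1$.

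The proof of (2), however, has a genuine gap. To identify $c_{i-1}\bar q_{i-1}-\bar q_{i-2}$ with the least positive residue $\bar q_i$ you need it to lie in $(0,d]$. The lower bound does follow inductively from $c_j\ge 2$ (which forces strict increase of the recursively built sequence). But the \emph{upper} bound $c_{i-1}\bar q_{i-1}-\bar q_{i-2}\le d$ is exactly the nontrivial point, and you assert it without justification: neither $c_j\ge 2$ nor the monotonicity it gives yields any upper bound. This is precisely the work the paper does when it estimates $\overline{Q}_i<d$. To close the gap along your lines, you could first prove identity (5) for the recursively defined quantities $Q_j$ (with $Q_0=0$, $Q_1=1$, $Q_j=c_{j-1}Q_{j-1}-Q_{j-2}$) by a telescoping computation that uses only the $q$-recursion; setting $j=n$ and using $q_n=1$, $q_{n+1}=0$ gives $Q_{n+1}=Q_{n+1}q_n-q_{n+1}Q_n=d$, and then strict increase forces $Q_j<d$ for $j\le n$, whence $Q_j=\bar q_j$. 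As written, the identification in (2) is not established, and since everything else in your proof hangs on (2), the argument is incomplete at its root.
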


\begin{proof}
Let $\overline{Q}_i := d q_i \left( \frac{1}{q_0 q_1} + \cdots + \frac{1}{q_{i-1} q_{i}} \right)$.
Formulas~\eqref{lemma-q-2}--\eqref{lemma-q-6},
replacing $\bar{q}_i$ by $\overline{Q}_i$, are easily checked after some simple calculations.
Hence $q_1 \overline{Q}_i \equiv q_i \mod d$.
To end the proof it is enough to show $\overline{Q}_i < d$ for $i=1,\ldots,n$. Let us fix $1 \leq i \leq n$,
since $q_{j} > q_{j+1}$, one obtains $q_j \geq q_i + (i-j)$ for $j=0,1,\ldots,i$. Therefore,
\begin{align*}
\overline{Q}_i & \leq d q_i \left( \frac{1}{(q_i+i)(q_i+i-1)} + \frac{1}{(q_i+i-1)(q_i+i-2)} +
\cdots + \frac{1}{(q_i+1) q_i} \right) \\
& = d q_i \left( \frac{2}{(q_i+i)(q_i+i-2)} + \cdots + \frac{1}{(q_i+1) q_i} \right) = \cdots =
d \cancel{q_i} \frac{i}{(q_i + i) \cancel{q_i}} < d. %\qedhere
\end{align*}
\end{proof}

\subsection{The \texorpdfstring{$X$}{X}-decomposition of \texorpdfstring{$k$}{k} and the coin change-making problem}
\label{sec-Xdecomp}
Consider now $\alpha\in (\ZZ_{\geq 0})^{n+2}$ a vector
with $n+2$ non-negative coordinates $\alpha=[\alpha_0,\dots,\alpha_{n+1}]$, we will define
$\|\alpha\|_X:=\alpha\cdot \mathbf{q}$. In this context, if $k\equiv \|\alpha\|_X \mod d$, we will say 
$\alpha$ is an $X$-\emph{decomposition} of $k$. We will also define $\|\alpha\|_1:=\sum \alpha_i$.

\begin{rem}
\label{rem-Xdecomp}
Consider $D=\{f=0\}$, with $f\in\cO_X(k)$, a $\QQ$-divisor in $X$ and 
$v:\cO^*_X(k)\to \frac{1}{d}(\ZZ^n)$ the list of valuations as defined in Section~\ref{sec-curvettes}.
The vector $\alpha(f):=[0,d v_1(f),\dots,d v_n(f),0]$ has integral coordinates. 
Note that $\|\alpha(f)\|_X\equiv k \mod d$.
\end{rem}

From yet another point of view, an $X$-\emph{decomposition} of $k$ is a solution to the following 
change-making scenario: given an integer $k$ and a sequence of coin values $\mathbf q=[q_0,q_1,\dots,q_{n}]$ 
one is interested in the amount of coins of each type $[k_0,k_1,\dots,k_{n}]$ that add up to $k$, 
that is, $\sum_i k_iq_i=k$. 

Among all possible solutions to the coin change-making scenario, there is an effective one following 
the \emph{greedy algorithm} resulting from picking the largest value coin which is not greater than the 
remaining amount. In our case, this results in the following.

\begin{dfn}
\label{def-greedy}
Let $X=X(d;1,q)$ be a surface and $\mathbf q$ defined as above, then the \emph{\greedy} $X$-decomposition of
$k$ is the following list of integers $[k_0,k_1,...,k_n]$, resulting from the quotients of the 
division of $0\leq k'<d$, $k\equiv k' \mod d$, by $\mathbf{q}$, that is, $k'=k_1q_{1}+k'_1$, and 
$k'_{i}=k_{i+1}q_{i+1}+k'_{i+1}$ for $i\geq 1$.
\end{dfn}

The \greedy $X$-decomposition of $k$ will be denoted by~$[k]$.
Note $\|[k]\|_1:=\sum k_i=[k]\cdot \mathbf{1}$ which will be simply denoted by~$\|k\|_1$.

Also, among all possible solutions to the coin change-making scenario, one can state the following
\emph{knapsack type problem} called the \emph{coin change-making problem}. 

\begin{problem}[Coin Change-Making Problem]
\label{prob-coin}
Given $k$ and $\mathbf q$, find a solution $\alpha$ to the coin change-making scenario which minimizes 
the number of coins, that is, such that $\|\alpha\|_1$ is minimal.
\end{problem}

The greedy algorithm does not provide in general a solution to the coin change-making problem, for instance,
for $k=6$ and $\mathbf q=[4,3,1]$ note that the greedy algorithm provides the following solution to the change-making 
scenario $[6]=[1,0,2]$ which is not a solution to the problem since $[0,2,0]$ uses fewer coins.

In our case however the answer is positive.

% \begin{rem}
% \label{rem-greedy}
%  
% Also note that the \greedy $X$-decomposition is a solution for the following 
% \emph{knapsack type problem} called the \emph{coin change-making problem} (cf.~\cite{MR0472020}): 
% given a $0\leq k<d$, find a sequence 
% $\alpha\in (\ZZ_{\geq 0})^{n+2}$ such that $\|\alpha\|_X=k$ and $\|\alpha\|_1$ is minimal.
% \end{rem}

\begin{lemma}
\label{lemma-greedy}
Given a surface $X=X(d;1,q)$ and $\mathbf q$ as above, the \greedy \break $X$-decomposition of $k\in \ZZ$ is a
solution to the Coin Change-Making Problem~\ref{prob-coin}.
\end{lemma}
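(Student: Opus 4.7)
My plan is to prove $\|\alpha\|_1\geq\|k\|_1$ for an arbitrary $X$-decomposition $\alpha=[\alpha_0,\alpha_1,\ldots,\alpha_{n+1}]$ of $k$ by induction on the length $n$ of the Hirzebruch-Jung sequence $\mathbf q$. As a preliminary normalization, since $q_0=d\equiv 0\!\mod d$ and $q_{n+1}=0$ contribute nothing to the class of $\|\alpha\|_X$ modulo $d$, I may zero out $\alpha_0$ and $\alpha_{n+1}$ at no cost in $\|\alpha\|_1$ and henceforth assume $\alpha_0=\alpha_{n+1}=0$.

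The base case $n=1$ is direct: $X=X(d;1,1)$ has only the denomination $q_1=1$, so $\alpha_1\equiv k\!\mod d$ together with $\alpha_1\geq 0$ forces $\alpha_1\geq k=\|k\|_1$.

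For the inductive step, my central tool is the Hirzebruch-Jung identity $c_1q_1-q_2=d$. Whenever $\alpha_1\geq c_1$, I replace $c_1$ coins of $q_1$ by a single coin of $q_2$: this exchange preserves $\|\alpha\|_X\!\mod d$ while strictly reducing $\|\alpha\|_1$ by $c_1-1\geq 1$. Iterating, I may assume $\alpha_1\leq c_1-1$, a bound also satisfied by the greedy coefficient $k_1=\lfloor k/q_1\rfloor$. Then the truncated vector $[\alpha_2,\ldots,\alpha_n]$ is an $X'$-decomposition in the sub-singularity $X'=X(q_1;1,q_2)$, whose Hirzebruch-Jung sequence $[q_1,q_2,\ldots,q_n]$ has length $n-1$. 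Using $d\equiv -q_2\!\mod q_1$ (Lemma~\ref{lemma-q}) to identify the $X'$-class represented by the tail, the inductive hypothesis applied to $X'$ supplies a lower bound on $\sum_{i\geq 2}\alpha_i$, and combining with $\alpha_1$ completes the comparison with $\|k\|_1=k_1+\|k'_1\|_1^{X'}$.

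The main obstacle is the subcase $\alpha_1<k_1$: here the tail must represent an integer value exceeding its greedy counterpart by exactly $(k_1-\alpha_1)q_1$, yet the purely modular inductive hypothesis only yields $\sum_{i\geq 2}\alpha_i\geq\|k'_1\|_1^{X'}$, which falls $k_1-\alpha_1$ coins short of what is required. To overcome this I would prove the following auxiliary monotonicity inside $X'$: adding $q_1$ to the exact target raises the minimum coin weight by at least one. This follows from a secondary induction inside $X'$ using the identity $q_1=c_2q_2-q_3$, which guarantees that each additional $q_1$-unit of overshoot forces at least one additional coin in any reduced representation. Plugging this back gives the $k_1-\alpha_1$ missing coins and finishes the induction.
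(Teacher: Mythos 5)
Your approach is genuinely different from the paper's: the paper simply invokes the Magazine--Nemhauser--Trotter sufficient criterion for greedy optimality (namely that $\G(\mathbf q,c_iq_i-q_{i-1})\le c_i-1$ for all $i$, which in the Hirzebruch--Jung situation reduces to $\G(\mathbf q,q_{i+1})=1\le c_i-1$ since $c_i\ge 2$), whereas you attempt a self-contained induction on the length of $\mathbf q$. That is a legitimate and, if completed, more elementary route.

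However, there is a genuine gap at the point you yourself flag. Your ``auxiliary monotonicity'' claim (adding $q_1$ to the target in $X'$ costs at least one more coin) is asserted to follow from a ``secondary induction inside $X'$ using $q_1=c_2q_2-q_3$,'' but this is not worked out, and the obvious attempt runs into exactly the same kind of case split you started with (what if the minimal representation of $m+q_1$ uses no coin of $q_1$ and fewer than $c_2$ coins of $q_2$?). In fact the monotonicity statement is essentially \emph{equivalent} to greedy optimality in $X'$, so proving it by an independent ``secondary'' argument rather than invoking the primary inductive hypothesis is circular in spirit and, as sketched, incomplete.

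The deeper source of the trouble is that you insist on the \emph{modular} formulation $\|\alpha\|_X\equiv k\pmod d$ and normalize by exchanges that change the exact value. Problem~\ref{prob-coin} is about the \emph{exact} scenario $\sum_i k_iq_i=k'$ with $0\le k'<d$, and working exactly removes the difficulty: once $\alpha_0=0$ you automatically have $\alpha_1\le k_1$ (otherwise the tail would be negative), and the tail $[\alpha_2,\dots,\alpha_n]$ represents $k'-\alpha_1 q_1$ \emph{exactly} in $X'=X(q_1;1,q_2)$. Since $q_1=q'_0$ is a denomination of $X'$, the primary inductive hypothesis gives
$$
\sum_{i\ge 2}\alpha_i\;\ge\;\G_{X'}\!\bigl(k'-\alpha_1q_1\bigr)\;=\;(k_1-\alpha_1)+\G_{X'}(k'_1),
$$
because greedy in $X'$ first peels off $(k_1-\alpha_1)$ coins of $q_1$. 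Adding $\alpha_1$ yields $\|\alpha\|_1\ge k_1+\G_{X'}(k'_1)=\|k\|_1$, which is exactly what you need and absorbs your problematic subcase $\alpha_1<k_1$. Your modular reduction to the class $k'_1\bmod q_1$ throws away the factor $(k_1-\alpha_1)$ and creates the shortfall you observed; keeping track of the exact target is the missing idea. The exchange lemma ($c_1$ coins of $q_1$ for one of $q_2$) is a nice observation but is not needed for the exact-target version, and as you apply it it changes the exact value by $d$ each time, which is another symptom of the modular/exact confusion.
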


\begin{proof}
The result is a direct consequence of the proof of the main result in~\cite{MR0472020} (see also~\cite[Theorem p.4]{andy}) which we 
summarize here for convenience. Denote by $\Opt(\mathbf q,k)$ (resp.~$\G(\mathbf q,k)$) the number of coins in a solution 
(resp.~\greedy candidate) to the coin change-making problem for $\mathbf q$ and $k$. Denote by 
$c_i:=\lceil \frac{q_{i-1}}{q_i}\rceil$.
Then $\Opt(\mathbf q,k)=\G(\mathbf q,k)$ for all $k$ if $\G(\mathbf q,c_iq_i-q_{i-1})\leq c_i-1$ for all $i=1,\dots,n$.
In our situation, $c_iq_i-q_{i-1}=q_{i+1}$ and $c_i\geq 2$, hence $\G(\mathbf q,c_iq_i-q_{i-1})=1$ and the result follows.
\end{proof}

\subsection{Proof of Theorem~\ref{thm-main-generic}}
Before we start with the proof, let us describe the irreducible curvettes in~$X$.

\begin{lemma}
\label{lemma-qi}
The quasi-invariant $\Q$-divisor $f=x^{q_i}-y^{\bar q_i}$ is a curvette in~$\cO_X(q_i)$.
\end{lemma}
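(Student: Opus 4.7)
The strategy is to verify quasi-invariance and then induct on $i$, using that after one step of the Hirzebruch-Jung resolution the curvette for $E_i$ in $X$ reduces to the curvette for $E_{i-1}$ in $X_1 = X(q;1,q_2)$.

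First, Lemma~\ref{lemma-q}(\ref{lemma-q-4}) gives $q\bar{q}_i \equiv q_i \mod d$, so both monomials $x^{q_i}$ and $y^{\bar{q}_i}$ (and hence $f$) lie in $\cO_X(q_i)$. A short induction using $q_{i-1} = c_i q_i - q_{i+1}$ and $\gcd(d,q)=1$ yields $\gcd(d,q_i)=1$; combined with Lemma~\ref{lemma-q}(\ref{lemma-q-5}) this forces $\gcd(q_i,\bar{q}_i)=1$, so $f$ is irreducible in $\CC[x,y]$.

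I induct on $i$. For $i=1$, $f = x^q - y$; the $(1,q)$-weighted blow-up has a smooth $x$-chart in which, with $y = x^q v$, the pullback factors as $x^q(1-v)$, so the strict transform $\{v=1\}$ meets $E_1 = \{x=0\}$ transversally at the smooth point $(0,1)$. For $i \geq 2$, I perform the $(1,q)$-weighted blow-up and use Lemma~\ref{lemma-q}(\ref{lemma-q-4}) to rewrite $q\bar{q}_i - q_i = d\,(\bar{q}_{X_1})_{i-1}$. In the smooth $x$-chart the pullback is $x^{q_i}\bigl(1 - x^{d(\bar{q}_{X_1})_{i-1}} v^{\bar{q}_i}\bigr)$, so the strict transform misses $E_1$ there. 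In the singular second chart, whose $\qa_q$-cover is parameterized by $(s',v) \mapsto (s'v, v^q) = (x,y)$, the pullback factors as $v^{q_i}\bigl((s')^{q_i} - v^{d(\bar{q}_{X_1})_{i-1}}\bigr)$, leaving $(s')^{q_i} - v^{d(\bar{q}_{X_1})_{i-1}}$ as the strict transform upstairs. A direct analysis of the $\qa_d \times \qa_q$-action identifies this chart downstairs with $X_1 = X(q;1,q_2)$, whose $\qa_q$-cover has coordinates $(\tilde{x},\tilde{y}) = (s', v^d)$ (using the small-form identification $-d \equiv q_2 \mod q$). Thus the strict transform becomes $\tilde{x}^{q_i} - \tilde{y}^{(\bar{q}_{X_1})_{i-1}}$, which by the inductive hypothesis is the $E_{i-1}$-curvette of $X_1$, so $f$ is an $E_i$-curvette of $X$.

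The main obstacle is the careful bookkeeping of the $\qa_d \times \qa_q$-action on the second chart so that the exponent $d(\bar{q}_{X_1})_{i-1}$ upstairs collapses to $(\bar{q}_{X_1})_{i-1}$ downstairs on $X_1$. Concretely, one computes $\qa_d$-invariants (generated by $s'$ and $V = v^d$) and checks that the residual $\qa_q$-action on $(s', V)$ is of type $X(q;-1,d)$, which via $-d \equiv q_2 \mod q$ is exactly $X(q;1,q_2)$.
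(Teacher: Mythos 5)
Your core argument -- induct on the length of the Hirzebruch-Jung resolution, perform the $(1,q)$-blow-up, use Lemma~\ref{lemma-q}\eqref{lemma-q-4} to rewrite the strict transform as the $(i-1)$-st curvette of $X_1=X(q;1,q_2)$ -- is exactly the paper's strategy. Your explicit chart and $\qa_d\times\qa_q$-quotient computations in the second chart are correct and usefully fill in what the paper's proof leaves to the reader; the paper instead dispatches the base case by noting that $\Gamma_\LL(q_1)$ has a single compact face.

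However, there is a false side claim in your second paragraph. It is not true that $\gcd(d,q_i)=1$ for all $i$, and consequently not true that $\gcd(q_i,\bar q_i)=1$ or that $f=x^{q_i}-y^{\bar q_i}$ is irreducible in $\CC[x,y]$. For instance in $X(6;1,5)$ one has $\mathbf q=[6,5,4,3,2,1,0]$ and $\mathbf{\bar q}=[0,1,2,3,4,5,6]$, so $q_2=4$, $\bar q_2=2$, $\gcd(6,4)=2$, and $f=x^4-y^2=(x^2-y)(x^2+y)$ factors in $\CC[x,y]$; this is exactly the example in the remark the paper gives immediately after this lemma. Lemma~\ref{lemma-q}\eqref{lemma-q-5} only gives $\gcd(q_i,\bar q_i)\mid d$, not $\gcd(q_i,\bar q_i)=1$. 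Fortunately, irreducibility in $\CC[x,y]$ is never used in your recursion, so the rest of the argument stands: the relevant irreducibility is that of the Weil divisor on $X$ (the factors $x^2\pm y$ above are not quasi-invariant), and this follows a posteriori from the curvette property. You should simply delete the $\gcd$/irreducibility sentences.
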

\begin{proof}
In order to show this result, we will use a recursive argument on the length of the canonical resolution
of $X$ (see Section~\ref{subsec-wblowup}). Let us perform the $(1,q)$-blow-up of $X$. The strict
transform of $f$ is $\widehat{f} = x^{q_i} + y^{\frac{q \bar{q}_i-q_i}{d}} \in \O_{\widehat{X}}(q_i)$,
where $\widehat{X} = X(q;1,q_2)$. By Lemma~\ref{lemma-q}\eqref{lemma-q-4},
$\widehat{f} = x^{q'_{j}}-y^{\bar{q}'_{j}}$ where $q' = q_{\widehat{X}}$ and $j=i-1$.

Hence it is enough to check the result for $q_1$, equivalently that
the Newton polygon $\Gamma_\LL(q_1)$ has only one compact face. 
This is immediate since $\Gamma_\LL(q)$ is the $\LL(k)$-convex hull of 
$\{(0,1)\}+\LL$ and $\{(q,0)\}+\LL$ as in Figure~\ref{fig-Oq} which has only one compact face.
\begin{figure}[ht]
\begin{center}
\begin{tikzpicture}[scale=1]
    % Draw axes
    \draw [<->,thick] (0,2) node (yaxis) [above] {$ $}
        |- (4,0) node (xaxis) [right] {$ $};
    \fill (0,1) circle (2pt) node [left] {$(0,1)$};
    \draw (3,0) -- (0,1);
    \fill (3,0) circle (2pt) node [below] {$(q,0)$};
    \draw (2.5,1.5) node [left] {$N_\LL(q)$};
\end{tikzpicture}
\end{center}
\caption{}
\label{fig-Oq}
\end{figure}
\end{proof}

\begin{rem}
Note that $f$ as above might not be irreducible as a germ in $\cO_{\CC^2}$. For instance,
in $X=X(6;1,5)$ one has $\mathbf q=[6,5,4,3,2,1,0]$, $\mathbf{\bar q}=[0,1,2,3,4,5,6]$ and thus 
by Lemma~\ref{lemma-qi}, $x^4-y^2\in \cO_X(4)$ is irreducible in~$\cO_X(4)$. Note that neither
$x^2-y$ nor $x^2+y$ are quasi-invariant.
\end{rem}

\begin{proof}[Proof of Theorem~\ref{thm-main-generic}]
Following the notation introduced in Remark~\ref{rem-Xdecomp} note that $\|\alpha(f)\|_X=k$.
Assume $g\in \cO_X(k)$ is a germ such that $v(g)\leq v(f)$. To show the minimality of $f$ it is enough 
to prove that $v(g)=v(f)$. On the one hand $k\equiv \|\alpha(g)\|_X\leq \|\alpha(f)\|_X=k$ implies
$\|\alpha(g)\|_X=k$. On the other hand, by Lemma~\ref{lemma-greedy} $\alpha(f)$ is a solution to the 
coin change-making problem associated with $\|\alpha\|_X=k$, that is, $\|\alpha(f)\|_1$ is minimal.
Since $\|\alpha(g)\|_1\leq \|\alpha(f)\|_1$, one has $\|\alpha(g)\|_1= \|\alpha(f)\|_1$, which together
with $v(g)\leq v(f)$, implies $v(g)=v(f)$.

The \emph{moreover} part is equivalent to proving $N_\LL(g)=N_\LL(f)$ for any generic germ $g\in \cO_X(k)$.
This is a consequence of Proposition~\ref{prop-Nk} since $N_\LL(f)=\LL(k)=\LL(g)$.
\end{proof}

\subsection{Proof of Theorem~\ref{thm-Delta}}

\begin{proof}
Since $\R_X(k)=\delta_X(f)-\K_X(f)$ does not depend on the choice of $f\in \cO_X(k)$ 
(see~\cite{CMO14,Ortigas13PhD}), one can calculate this invariant using the generic germ provided in 
Theorem~\ref{thm-main-generic}. On the other hand, the recursive formula for $\delta_X(f)$ is 
given in~\cite{CMO12}. Therefore, it is enough to compute $\K_X(f)$ for the generic germ $f$.
This is a consequence of the following result.
\end{proof}

\begin{lemma}
\label{lemma-K-generic}
If $f\in \cO_X(k)$, $k\neq 0$ is a generic germ, then $\K_{X}(f) = \| k \|_1 -1 $, 
that is, the number of its irreducible components minus one.
\end{lemma}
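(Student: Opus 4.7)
The natural approach is to induct on the length $n$ of the Hirzebruch--Jung resolution of $X$, using Proposition~\ref{prop-KP-blowup} for the first $(1,q)$-weighted blow-up $\pi:\widehat X\to X$. That proposition decomposes
$$\K_X(f)=\K_\pi+\sum_{P\in E\cap V(\widehat f)}\K_P(\widehat f),$$
so the plan is to (i) describe $\widehat f$ along the exceptional divisor $E$ and (ii) evaluate the combinatorial term $\K_\pi$.

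For~(i), I would reuse the strict-transform calculation from the proof of Lemma~\ref{lemma-qi}. By Lemma~\ref{lemma-q}\eqref{lemma-q-4} each factor $x^{q_i}-\lambda^i_j y^{\bar q_i}$ of~\eqref{eq-fgeneric} has $(1,q)$-multiplicity exactly~$q_i$, so $\nu_{1,q}(f)=\sum_{i=1}^n k_iq_i$. The $k_1$ factors with $i=1$ lift to $k_1$ smooth transversal branches $\{1-\lambda^1_j y=0\}$ meeting $E$ at distinct smooth points of $\widehat X$ in chart~$1$; at each of them $\widehat f$ is a smooth germ on a smooth surface, hence $\K_P(\widehat f)=0$. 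The remaining $\|k\|_1-k_1$ factors (with $i\geq 2$) all pass through the unique singular point $P_2$ of $\widehat X$, of type $X_2=X(q;1,q_2)$, and transform there into factors $x^{q'_{i-1}}-\mu^i_j y^{\bar q'_{i-1}}$ (with $q',\bar q'$ the numerical data of $X_2$) having pairwise distinct parameters $\mu^i_j$. Thus $\widehat f|_{P_2}$ is a generic germ in $\cO_{X_2}(k'')$ whose \greedy $X_2$-decomposition is $[0,k_2,\ldots,k_n,0]$, giving $\|k''\|_1=\|k\|_1-k_1$. Since $X_2$ has Hirzebruch--Jung length $n-1$, the inductive hypothesis yields $\K_{P_2}(\widehat f)=\|k\|_1-k_1-1$ when $k''\neq 0$, and $\K_{P_2}(\widehat f)=0$ otherwise (no branch of $\widehat f$ passes through $P_2$).

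For~(ii), the greedy property forces $\sum_{i\geq 2}k_i q_i=k-k_1 q<q<d$, so $\nu_{1,q}(f)=k$. Since $0<k<d$, the constraints $i+qj\leq k$ and $i+qj\equiv k\!\pmod d$ collapse to $i+qj=k$, yielding
$$\K_\pi=\#\{(i,j)\in\ZZ_{\geq 1}^2:i+qj=k\}=\lfloor (k-1)/q\rfloor,$$
which equals $k_1$ when $\|k\|_1>k_1$ and equals $k_1-1$ when $\|k\|_1=k_1$. Adding the two contributions produces $\K_X(f)=\|k\|_1-1$ in both cases. The base case $n=1$ (so $X=X(d;1,1)$, $q=1$) is the direct check $\K_X(f)=k-1=\|k\|_1-1$, since $\widehat X$ is already smooth.

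The main technical checkpoint is the second-chart transformation at~$P_2$: I need to verify that the distinctness of the $\lambda^i_j$ with $i\geq 2$ is preserved by the blow-up into distinct $\mu^i_j$, so that $\widehat f|_{P_2}$ is genuinely a generic germ on $X_2$ in the sense of Theorem~\ref{thm-main-generic} and the inductive hypothesis really applies.
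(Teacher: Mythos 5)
Your proof follows essentially the same route as the paper's: apply Proposition~\ref{prop-KP-blowup} to the $(1,q)$-weighted blow-up, compute $\K_\pi$ combinatorially (your closed form $\lfloor(k-1)/q\rfloor$ agrees with the paper's case split $k_1-1$ vs.~$k_1$), and iterate down the Hirzebruch--Jung bamboo at the unique singular point $P_2$ of type $X(q;1,q_2)$. You are somewhat more explicit about the inductive bookkeeping at $P_2$ and correctly flag the one checkpoint (distinctness of the transformed parameters $\mu^i_j$) that the paper disposes of with the terse ``repeat the same arguments until $X$ is smooth.''
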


\begin{proof}
By the second part of Theorem~\ref{thm-main-generic} we can assume that $f$ has the form given 
in~\eqref{eq-fgeneric}. Consider the $(1,q)$-weighted blow-up at
$0 \in X$ and denote by $\widehat{D}$ the strict transform of $D:=\{f=0\} \subset X$.
By Proposition~\ref{prop-KP-blowup},
\begin{equation}\label{formula_KPf2}
\K_{X}(D) = \K_{\pi} + \sum_{P \in E \cap \widehat{D}} \K_P (\widehat{D}),
\end{equation}
where $\K_{\pi}:= \# \{ (i,j) \in \mathbb{Z}^2 \mid i,j \geq 1, \, k \geq i +qj \equiv k \! \mod d \}$.
The unique pairs $(i,j)$ satisfying both the inequality and the congruence are
$(k-q,1), (k-2q,2), \ldots, (k-jq,j)$ as long as $k-jq > 0$. Then,
$$
\K_\pi =
\begin{cases}
k_1 - 1 & \text{if} \ k=k_1 q, \\
k_1 & \text{otherwise}.
\end{cases}
$$
%That is, $\K_\pi$ is either $k_1-1$, if the last $k_i \neq 0$ is $k_1$, or $k_1$ otherwise.
On the other hand, by construction, $\K_P(\widehat{D}) \neq 0$ only when $P$
is the singular point of type $(q;1,q_2)$.

Equation~\eqref{formula_KPf2} allows us to repeat the same arguments until $X$ is smooth,
see Section~\ref{subsec-wblowup}. The conclusion is that $\K_{X}(f) = \sum_{i=1}^n k_i - 1 = \|k\|_1-1$,
that is, the number of local branches of $f$ minus~$1$.
\end{proof}

\subsection{Proof of Theorem~\ref{thm-descomp}}
% The purpose of the following result is to decompose the eigenmodule of $k$-invariants
% as a product of $\cO_X(q_i)$. In order to simplify notation, given $\alpha$ an $X$-decomposition of $k$,
% we will denote by $\cO_X(\alpha)=\otimes_{i=0}^{n+1} \cO_X(q_i)^{\alpha_i}$. One has the following:
% \begin{thm}
% \label{thm-descomp}
% If $[k]$ is the \greedy $X$-decomposition of $k\in \ZZ$, then
% $$\cO_X(k)=\cO_X([k]).$$
% \end{thm}

\begin{proof}
Since $\cO_X(q_0)^{k_0}=\cO_X$, without loss of generality, we can assume~$0\leq k <d$.
\begin{enumerate}
\item[\textbf{Case 1.}]
Note that if $k=q_i$ there is nothing to prove.
\item[\textbf{Case 2.}]
If $[k]=[0,...,0,k_i,0,...,0]$ it is enough to show that $I:=\dim \frac{\cO_X(k)}{\otimes \cO_X(q_i)^{k_i}}=0$,
that is, there are no $\LL(k)$-points under the polygon $\oplus_i \Gamma_\LL(q_i)^{k_i}$. In order to do this consider 
$f=(x^{q_i}-\lambda_1 y^{\bar q_i})\cdots (x^{q_i}-\lambda_{k_i} y^{\bar q_i})$. 
 
The $(\bar q_i,q_i)$-blow-up $\pi$ of $X$ is a $\Q$-resolution of $f$ and thus, by Proposition~\ref{prop-KP-blowup},
\begin{equation}
\label{eq-Kpi}
\K_{X}(f) = \K_{\pi} + \sum_{P \in E \cap V(\widehat{f})} \K_P (\widehat{f})=\K_{\pi},
\end{equation}
where $\K_{\pi} = \# \{ (\ii,\j) \in \mathbb{Z}^2 \mid \ii,\j \geq 1, \ k \bar{q}_i \geq \bar{q}_i r + q_i s \equiv k \bar{q}_i
\! \mod d \}$.

Note that both $I$ and $\K_\pi$ describe a certain number of $\LL(k)$-points as follows: let $s$ denote the number of 
points on the compact segment $L$ of $\oplus_{k_i} \Gamma_\LL(q_i)$, $I_1$ (resp.~$I_2$) the number of points on the 
$x$-axis (resp.~$y$-axis) under or on $L$. Then $I$ and $\K_\pi$ are related by
\begin{equation}
 \label{eq-cardinals}
I+s=\K_\pi+I_1+I_2.
 \end{equation}
On the other hand it is clear that $s=k_i+1$. 
%(by induction starting with $k_i=1$ which is Lemma~\ref{lemma-qi}). 
Also, $I_1=1$ (and analogously by symmetry $I_2=1$). Therefore
formula~\eqref{eq-cardinals} becomes $\K_{\pi}=k_i-1+I$.

Finally, by Lemma~\ref{lemma-K-generic} one has $\K_{X}(f)=k_i-1$. Since $\K_{X}(f) = \K_\pi$ by~\eqref{eq-Kpi},
one obtains~$I=0$.
 \item[\textbf{Case 3.}]
 In general, since $0\leq k<d$ one has the \greedy $X$-decomposition of $k$, namely $[k]=[0,k_1,...,k_n]$. Again, we will show that 
 $I:=\dim \frac{\cO_X(k)}{\otimes \cO_X(q_i)^{k_i}}=0$. Assume $k_1\neq 0$, otherwise the result will be proved by
 induction. Consider $x^{\ii_0}y^{\j_0}$ a monomial in $\cO_X(k)$, that is, $0\leq \ii_0+q\j_0\equiv k \mod d$. 
 Note that the slope of the compact segment in $\Gamma_\LL(q_i)$ is $-\frac{\bar q_i}{q_i}$, hence the biggest slope among the
 compact segments in 
 ${\displaystyle{\bigoplus_{i,k_i} \Gamma_\LL(q_i)}}$ is $-\frac{\bar q_1}{q_1}$, and its corresponding line has equation $i+qj=k$.
 Note that $x^{\ii_0}y^{\j_0}$ must be such that $k\leq \ii_0+q\j_0\equiv k \mod d$. Therefore, after substituting
 $x\mapsto x$, $y\mapsto x^qy$ one can construct the following monomial $x^k (x^{\ii_1}y^{\j_1})$,
 where $\ii_1=(\ii_0+q_1\j_0-k)/d$, $\j_1=\j_0$, $\ii_1+q_2\j_1\equiv k^{(1)} \mod q$, $k=k_1q_1+k^{(1)}$, and 
 $x^{\ii_1}y^{\j_1}\in \cO_{X_1}(k^{(1)})$, $X_1=X(q_1;1,q_2)$. Note that, given $x^{\ii_1}y^{\j_1}\in \cO_{X_1}(k^{(1)})$, 
 one can recover the original monomial in $\cO_X(k)$ by writing $\j_0:=\j_1$ and $\ii_0:=\ii_1d+k-q\j_1$. Recursively,
 at the last step, one can use Case 2 to prove that such a monomial must belong to $\cO_{X_{n'}}(k^{(n')})$, where
 $n'$ is the last non-zero entry of $[k]$, namely, $[k_0,k_1,...,k_{n'},0,...,0]$. Hence, 
 this implies that $x^{\ii_0}y^{\j_0}\in \otimes \cO_X(q_i)^{k_i}$.
\end{enumerate}
\end{proof}

% In light of Theorem~\ref{thm-descomp} one obtains the following result.
% 
% \begin{cor}
% The number of irreducible branches of a generic germ $f\in \cO_X(k)$ is~$\|k\|_1$. 
% \end{cor}
% \begin{proof}
% By Proposition~\ref{prop-Nk}, $\Gamma_\LL(f)=\Gamma_\LL(k)$. By Theorem~\ref{thm-descomp},
% $\Gamma_\LL(f)=\oplus_{k_i} \Gamma_\LL(q_i)^{k_i}$, where $[k]=[k_0,\dots,k_{n+1}]$ is the \greedy $X$-decomposition of~$k$. 
% Since the number of compact faces of $\Gamma_\LL(q_i)$ is one, by Lemma~\ref{lemma-qi}, then the number of compact faces of 
% $\Gamma_\LL(f)$ is $k_1+...+k_n=\|k\|_1$.
% \end{proof}
% 
% \begin{prop}
% If $f\in \cO_X(k)$ is a curvette, then $k\equiv q_i \mod d$ for some $i$. 
% Moreover, $f=(x^{q_i}-\lambda y^{\bar q_i})$ is a curvette in~$\cO_X(k)$.
% \end{prop}
% \begin{proof}
% Since any curvette is generic and irreducible, by Theorem~\ref{thm-descomp}, the first part follows. 
% For the second part, it is enough to check that $f=(x^{q_i}-\lambda y^{\bar q_i})$ is irreducible and its 
% strict transform is transversal to~$E_i$. A similar argument as the one given in the proof of Lemma~\ref{lemma-qi},
% allows us to restrict ourselves to the case $q_i=q_1=q$ which is immediate.
% % Note that the strict transform of $f=(x^{q}-\lambda y)$ after a $(1,q)$-blow up in either chart is given by
% % $\tilde f=(1-\lambda y)$ which is irreducible and transversal to the $E_1$-exceptional divisor which has 
% % equation~$\{x=0\}$.
% \end{proof}

\subsection{Calculation of Newton numbers}
To end this section we will give some formulas to calculate the Newton numbers $\mu_X(k):=\mu_{\LL(k)}$ of a quotient 
surface singularity $X=X(d;1,q)$ as defined in~\eqref{eq-muL}.
In order to do so we need to introduce some notation. Recall the invariants $\mathbf{q}$, $\mathbf{\bar q}$ introduced 
in Section~\ref{sec-num-proper} as well as $[k]=[k_0,k_1,\dots,k_n,k_{n+1}]$ the greedy $X$-decomposition of $k$ as defined 
in Section~\ref{sec-Xdecomp}. If $0\leq k<d$, then $k=[k]\cdot \mathbf{q}$. We define $\bar k:=[k]\cdot \mathbf{\bar q}$. Also, 
denote by $Q=(Q_{ij})$ the following $(n+1)$-square upper triangular matrix: 
$$Q_{ij}:=
\begin{cases} 
0 & \text{ if }\ 0\leq j< i\leq n\\
(\bar q_{X_i})_{j-i+1} & \text{ if }\ 0\leq i\leq j\leq n,
\end{cases}
$$
where $X_i:=X(q_i;1,q_{i+1})$. Let us also use the notation $[0,k]$ (resp.~$[k,0]$) referring to $[0,k_1,\dots,k_n]$
(resp.~$[k_1,\dots,k_n,0]$). One has the following result.

\begin{prop}
\label{prop-muk}
$$\mu_X(k)=
\begin{cases}
 -1 & \text{ if }\ k=0\\
 \frac{(d-1)+(k-1)(\bar k -1)}{d}-[0,k]Q[k,0]^t  & \text{ otherwise.}
\end{cases}
$$
In particular, $\mu_X(q_i)=\mu_X(\bar q_i)=\frac{(d-1)+(q_i-1)(\bar q_i-1)}{d}$.
\end{prop}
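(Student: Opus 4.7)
The case $k=0$ is immediate from~\eqref{eq-muL} since $\LL(0)=\LL$, so I focus on $0<k<d$. The plan is to apply the formula $\mu_X(k)=1+(2V-V_1-V_2)/d$ from~\eqref{eq-muL}, which reduces the problem to identifying the axis intercepts $V_1, V_2$ and the area $V$ of the compact region bounded by the axes and the compact boundary of $\Gamma(\LL(k))$.

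To describe that polygon I would first invoke Proposition~\ref{prop-Nk} to identify $\Gamma(\LL(k))$ with $\Gamma_\LL(f)$ for any generic $f\in\cO_X(k)$. By Theorem~\ref{thm-main-generic} I may take $f=\prod_{i=1}^n\prod_{j=1}^{k_i}(x^{q_i}-\lambda^i_j y^{\bar q_i})$, and by Proposition~\ref{proper-newton}\eqref{proper-newton-1} its Newton polygon is the Minkowski sum of the polygons of the factors. Each factor contributes a single segment from $(q_i,0)$ to $(0,\bar q_i)$ of slope $-\bar q_i/q_i$, and these magnitudes are strictly increasing in $i$ by Lemma~\ref{lemma-q}\eqref{lemma-q-5}. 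Hence the compact boundary of $\Gamma(\LL(k))$ has vertices
$$P_m:=\Bigl(k-\sum_{j=1}^m k_j q_j,\ \sum_{j=1}^m k_j\bar q_j\Bigr),\qquad m=0,\ldots,n,$$
with $P_0=(k,0)$ and $P_n=(0,\bar k)$, so $V_1=k$ and $V_2=\bar k$.

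Next I would compute $2V$ by applying the shoelace formula to $(0,0), P_0, \ldots, P_n$. Writing $P_m=(x_m, y_m)$ and using $x_m-x_{m-1}=-k_m q_m$, $y_m-y_{m-1}=k_m\bar q_m$, a short manipulation yields $2V=\sum_m k_m(x_{m-1}\bar q_m+q_m y_{m-1})=k\bar k-\sum_{1\leq j<m\leq n}k_j k_m(\bar q_m q_j-q_m\bar q_j)$. Lemma~\ref{lemma-q}\eqref{lemma-q-3} rewrites each factor $\bar q_m q_j-q_m\bar q_j$ as $d\cdot(\bar q_{X_j})_{m-j}$, and the definition $Q_{ij}=(\bar q_{X_i})_{j-i+1}$ together with the upper-triangular structure of $Q$ makes the resulting double sum equal $[0,k]Q[k,0]^t$. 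Substituting into $\mu_X(k)=1+(2V-k-\bar k)/d$ and using $k\bar k-k-\bar k+1=(k-1)(\bar k-1)$ yields the announced formula; the special case $k=q_i$ corresponds to $[q_i]=\mathbf{e}_i$, for which the bilinear form vanishes (it requires two distinct nonzero indices), giving $\mu_X(q_i)=\frac{(d-1)+(q_i-1)(\bar q_i-1)}{d}$. I anticipate the only real obstacle to be the index bookkeeping required to align the shoelace expansion with the matrix product $[0,k]Q[k,0]^t$.
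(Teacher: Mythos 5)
Your approach is essentially the paper's: reduce to $0<k<d$, read off $V_1=k$ and $V_2=\bar k$ from the vertices of $\Gamma(\LL(k))$ given by the greedy decomposition, compute $2V$ from those vertices, and apply Lemma~\ref{lemma-q}\eqref{lemma-q-3} to convert $\bar q_m q_j - q_m\bar q_j$ into $d(\bar q_{X_j})_{m-j}$ so the double sum collapses to $[0,k]Q[k,0]^t$; the paper simply states the formula for $2V$ rather than deriving it via Theorem~\ref{thm-main-generic} and the shoelace identity, but the content is the same. The only thing you leave out is the second equality $\mu_X(\bar q_i)=\mu_X(q_i)$ in the final claim, which the paper disposes of by the symmetry $X(d;1,q)\cong X(d;1,\bar q)$, so you should add that one line.
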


\begin{proof}
According to~\eqref{eq-muL}, we only need to prove the case $0<k<d$, in which $\LL(k)\neq \LL$. Note that
$$
\mu_X(k)=1+\frac{2V-V_1-V_2}{d},
$$
where $2V=\frac{\sum q_i\bar q_i k_i^2+2\sum_{i>j} q_i\bar q_j k_ik_j}{d}$, $V_1=k$, and $V_2=\bar k$.
Finally, by Lemma~\ref{lemma-q}\eqref{lemma-q-3} one obtains
$2V=k\bar k-k-\bar k-d\sum_{i>j} (\bar q_{X_j})_{i-j} k_ik_j$. Therefore
$$
\mu_X(k)=1+\frac{(k-1)(\bar k-1)-1}{d}-\sum_{i>j} (\bar q_{X_j})_{i-j} k_ik_j,
$$
which results in the required formula.
In the particular case when $k=q_i$, then $k_i=1$ and $k_j=0$ if $j\neq i$, that is, $[k]=[0,\ldots,\overset{i}{1},\ldots,0]$. 
Therefore $[0,k]Q[k,0]^t=0$ and hence $\mu_X(q_i)=\frac{(d-1)+(q_i-1)(\bar q_i-1)}{d}$. 
By symmetry, since $X(d;1,q)\cong X(d;1,\bar q)$
one also obtains $\mu_X(\bar q_i)=\frac{(d-1)+(q_i-1)(\bar q_i-1)}{d}$.
\end{proof}

In light of~\eqref{eq-mu2} one has the following.
\begin{cor}
If $f\in \cO_X(k)$, $k\neq 0$ is a non-degenerated germ in $X=X(d;1,q)$, then
$$
\mu_X(f)=2I_N+\|k\|_1+r_N+\frac{(k-1)(\bar k -1)}{d}-\frac{d+1}{d}-[0,k]Q[k,0]^t,
$$
depends only on its $\LL(k)$-Newton polygon.
\end{cor}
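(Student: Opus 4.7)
The plan is to observe that this corollary is essentially a direct substitution combining the generalized Kouchnirenko formula~\eqref{eq-mu2} with the explicit formula for $\mu_X(k)=\mu_{\LL(k)}$ given in Proposition~\ref{prop-muk}. Both ingredients are already at our disposal, so no new geometric input is required; the only work is algebraic bookkeeping.

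More precisely, I would start from the identity
$$
\mu_X(f)=\mu_{\LL(k)}+2I_N+\|k\|_1+r_N-2,
$$
which is exactly the rearrangement of~\eqref{eq-mu2}, valid because $f$ is non-degenerate and Theorem~\ref{thm-mu} applies. Since $k\neq 0$, the lattice $\LL(k)$ is not equal to $\LL$, so the relevant case of Proposition~\ref{prop-muk} yields
$$
\mu_{\LL(k)}=\frac{(d-1)+(k-1)(\bar k -1)}{d}-[0,k]\,Q\,[k,0]^t.
$$

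Substituting this expression into the previous display and simplifying the constant term via
$$
\frac{d-1}{d}-2=-\frac{d+1}{d}
$$
yields exactly the stated formula for $\mu_X(f)$. Finally, the dependence on only the $\LL(k)$-Newton polygon is automatic from the right-hand side: the quantities $I_N$ and $r_N$ are read off from $N=N_\LL(f)$ (equivalently from $\Gamma_\LL(f)$), while $k$, $\bar k$, $\|k\|_1$, and the matrix $Q$ depend only on $k$ and the surface datum $\mathbf q$, hence on the Newton polygon through~$k$.

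The main (and essentially only) obstacle is ensuring that Theorem~\ref{thm-mu} is indeed applicable in the form~\eqref{eq-mu2}, which requires the non-degeneracy hypothesis on $f$; once this is granted, the corollary is just an arithmetical consolidation of Proposition~\ref{prop-muk} and~\eqref{eq-mu2}. No further lemmas are needed.
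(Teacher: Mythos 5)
Your proposal is correct and matches the paper's intended argument exactly: the paper introduces the corollary with the phrase ``In light of~\eqref{eq-mu2} one has the following,'' and the content is precisely the substitution of Proposition~\ref{prop-muk} into the rearranged Kouchnirenko-type identity~\eqref{eq-mu2}, with the constant simplification $\frac{d-1}{d}-2=-\frac{d+1}{d}$ you carry out. Nothing further is needed.
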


\begin{rem}
The matrix $Q$ can easily be constructed from $\mathbf{q}$ and $\mathbf{c}$. First note that $Q$ is an upper 
triangular $(n+1)$-square matrix with 1's on the diagonal. The last column is given by 
$[q_0=d,q_1=q,q_2,\dots,q_n]$ and the elements over the diagonal are given by $[c_1,\dots,c_n]$. The rest of 
the matrix can be filled using Lemma~\ref{lemma-q}\eqref{lemma-q-6}, which translates into the property that 
every two-by-two minor of $Q$ on the (upper triangular part of $Q$) has determinant equal to~1. 
Starting from bottom to top one can easily reconstruct the matrix $Q$.
\end{rem}

\begin{exam}
\label{ex-X14-11-mu}
Consider $X=X(14;1,11)$, then
$$
\array{rclcccccc}
\mathbf{q} &=& {[}14,&11,&8,&5,&2,&1,&0{]}\\
\mathbf c &=& {[}2,&2,&2,&2,&3,&2,&2{]}\\
\endarray
$$
Displaying $\mathbf{c}$ over the diagonal and $\mathbf{q}$ on the last column one obtains
$$Q=
\left(
\begin{matrix}
1 & 2 & \alpha_1 & \alpha_2 & \alpha_3 & 14\\
0 & 1 & 2 & \alpha_4 & \alpha_5 &11\\
0 & 0 & 1 & 2 & \alpha_6 & 8\\
0 & 0 & 0 & 1 & 3 & 5\\
0 & 0 & 0 & 0 & 1 & 2\\
0 & 0 & 0 & 0 & 0 & 1
\end{matrix}
\right)
$$
In order to obtain $\alpha_6$ one can use $\left| \begin{matrix} \alpha_6 & 8 \\ 3 & 5 \end{matrix}\right|=1$, which implies
$\alpha_6=5$ and so on, to obtain $\alpha_4=3$, $\alpha_1=3$, $\alpha_5=7$, $\alpha_2=4$, and $\alpha_3=9$.
One can compute for instance $\mu_X(10)$, where the \greedy $X$-decomposition of $10$ is $[0,1,0,1,0,0]$, and $\bar k=6$.
$$
\mu_X(10)=
\frac{(14-1)+(10-1)(6-1)}{14}-
\left(
\begin{smallmatrix}
0&0 & 1 & 0 & 1 & 0
\end{smallmatrix}
\right)
\left(
\begin{smallmatrix}
1 & 2 & 3 & 4 & 9 & 14\\
0 & 1 & 2 & 3 & 7 &11\\
0 & 0 & 1 & 2 & 5 & 8\\
0 & 0 & 0 & 1 & 3 & 5\\
0 & 0 & 0 & 0 & 1 & 2\\
0 & 0 & 0 & 0 & 0 & 1
\end{smallmatrix}
\right)
\left(
\begin{smallmatrix}
0\\1\\0\\1\\0\\0
\end{smallmatrix}
\right)=\frac{15}{7}.
$$
Consider now $f=x^{24}+x^{13}y+x^3y^7+xy^{11}+y^{20}\in \cO_X(10)$. The Newton polygons $\LL(10)$ and $N(f)$ 
are depicted in Figure~\ref{fig-newton}.
Since $I_{N(f)}=1$, $\|10\|_1=2$, and $r_{N(f)}=5$, according to~\eqref{eq-mu2} one has
$$
\mu_X(f)=2I_{N(f)}+\|10\|_1+r_{N(f)}-2+\mu_X(10)=\frac{64}{7}.
$$

\begin{center}
\begin{figure}
\includegraphics[scale=.5]{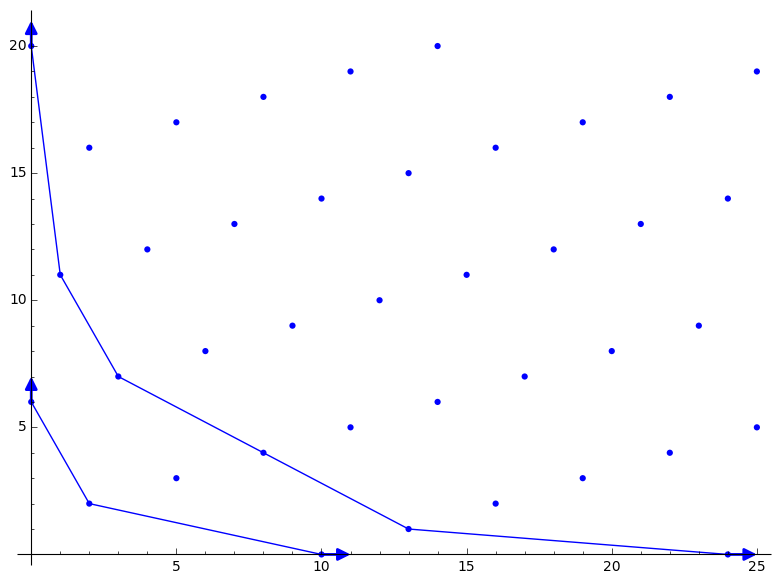}
\put(-250,40){$\LL(10)$}
\put(-210,70){$N(f)$}
\caption{}
\label{fig-newton}
\end{figure}
\end{center}
\end{exam}

\section{LR-logarithmic eigenmodules for generic \texorpdfstring{$\QQ$}{Q}-divisors}
\label{sec-LRlogarithmic}
We begin this section describing the canonical bundle of a cyclic quotient surface singularity $X$.
This description will allow us to give an alternative view on the discrepancy divisor of $X$.
Finally, the $\cO_X$-eigenmodule of LR-logarithmic forms is calculated for generic $\QQ$-divisors.

\subsection{The canonical bundle of \texorpdfstring{$X$}{X}}
Recall that $\mathbf c$ denotes the vector of coefficients of the Hirzebruch-Jung continued fraction of $\frac{d}{q}$
as defined in Section~\ref{sec-arithmetics}. Also recall that the canonical bundle is given by $\cO_X(K_X)=\cO_X(w)$, 
where~$w:=d-1-q$.

\begin{prop}
\label{prop-canonical}
Under the previous notation,
$$\cO_X(w) = \bigotimes_{i=1}^{n} \cO_X(q_i)^{c_i-2}.$$
\end{prop}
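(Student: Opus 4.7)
The plan is to reduce everything to the integer identity $\sum_{i=1}^n (c_i-2)\, q_i = w$ and then invoke Theorem~\ref{thm-descomp} to obtain the module-theoretic statement, with the iterated tensor product on the right interpreted in the divisor class group $\Cl(X)$ via Properties~\ref{property}\eqref{property2}--\eqref{property3}.

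First I would carry out the arithmetic. Rewriting the Hirzebruch--Jung relation as $c_i q_i = q_{i-1} + q_{i+1}$ for $i = 1, \ldots, n$ and summing over $i$,
$$\sum_{i=1}^n c_i q_i \;=\; \sum_{i=0}^{n-1} q_i \;+\; \sum_{i=2}^{n+1} q_i \;=\; q_0 + q_1 + q_n + q_{n+1} + 2\!\sum_{i=2}^{n-1} q_i.$$
Subtracting $2\sum_{i=1}^n q_i$ cancels the interior $q_i$'s and leaves
$$\sum_{i=1}^n (c_i - 2)\, q_i \;=\; q_0 - q_1 - q_n + q_{n+1} \;=\; d - q - 1 \;=\; w,$$
using the boundary values $q_0 = d$, $q_1 = q$, $q_n = 1$, $q_{n+1} = 0$.

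To upgrade this numerical identity to the stated module identification I would verify that $[0, c_1-2, c_2-2, \ldots, c_n-2, 0]$ is in fact the \greedy $X$-decomposition of $w$, so that Theorem~\ref{thm-descomp} applies directly. The induction is clean: setting $r_i := (c_i-1) q_i - q_{i+1} - 1$ (so that $r_1 = w$ by the previous paragraph), the inequalities $q_i > q_{i+1} \geq 0$ and $c_i \geq 2$ force $(c_i-2) q_i \leq r_i < (c_i-1) q_i$, so the greedy algorithm selects exactly $c_i - 2$ at step $i$, and the Hirzebruch--Jung recurrence produces $r_{i+1}$ of the same shape with $i+1$ in place of $i$. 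Theorem~\ref{thm-descomp} then yields $\cO_X(w) = \bigoplus_{i=1}^n \cO_X(q_i)^{c_i-2}$, which is the content of the proposition under the natural identification of tensor with Picard-style product on divisorial eigen-modules.

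The only part that requires care is the boundary bookkeeping around $i = 1$ and $i = n$, together with the conventions $q_0 = d$, $q_{n+1} = 0$, $c_0 = c_{n+1} = 2$ introduced in Section~\ref{sec-num-proper}; no real conceptual obstacle arises.
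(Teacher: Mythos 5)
Your proof is correct and follows essentially the same route as the paper: verify that $[0,c_1-2,\ldots,c_n-2,0]$ is the greedy $X$-decomposition of $w=d-1-q$ by induction on the division steps (your $r_i$ is exactly the remainder $w'_{i-1}$ in the paper's notation), then invoke Theorem~\ref{thm-descomp}. The telescoping-sum identity at the start is a nice redundant sanity check but not logically needed, since the inductive computation of the greedy quotients already implies it.
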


\begin{proof}
The result will follow from Theorem~\ref{thm-descomp} after calculating the \greedy $X$-decomposition of 
$w=d-1-q$. Let us denote by $[w]=[w_0=0,w_1,\dots,w_n,w_{n+1}]$ such decomposition. 
Since $d=c_1q_1-q_2$ and $c_1\geq 2$, one has 
$$
w_1= d-1-q_1=(c_1-2)q_1+(q_1-q_2-1)
$$ 
$w_1=c_1-2$. Analogously, by induction assuming $w_i=c_i-2$,
$$
w_{i+1}=
q_i-q_{i+1}-1=(c_{i+1}-2)q_{i+1}+(q_{i+1}-q_{i+2}-1)
$$ 
where $q_i=c_{i+1}q_{i+1}-q_{i+2}$ (note that $c_i\geq 2$), which implies $w_{i+1}=c_{i+1}-2$.
\end{proof}

\subsection{Another view on the discrepancy of cyclic quotient surfaces}
As a consequence of Proposition~\ref{prop-canonical} one can give an alternative proof of the formula for 
the discrepancy divisor of a cyclic quotient singularity, cf.~\cite{Blache-Twoaspects,Tucker-jumping}.
Recall that, given $\pi:\tilde X\to X$ a resolution of the singularity $X$ and 
$E_1,\dots,E_n$ the exceptional divisors of the resolution, the \emph{discrepancy divisor} (or 
\emph{relative canonical divisor}) $K_{\tilde X/X}=\sum_i \varepsilon_i E_i$ associated with 
$\tilde X$ and $\pi$ is given by the formula
\begin{equation}
\label{eq-discrepancy}
K_{\tilde X}=\pi^*(K_X) + K_{\tilde X/X},
\end{equation}
where $K_{\tilde X}$ is the canonical divisor on $\tilde X$ and $K_X=\pi_*(K_{\tilde X})$.

In case $X=X(d;1,q)$ and $\pi$ is the Hirzebruch-Jung resolution described in Section~\ref{sec-quotient},
the canonical divisor $K_X$ is given by a Weil divisor $f\in \cO_X(\w)=\cO_X(\mathbf c -2)$ 
(Proposition~\ref{prop-canonical}), therefore multiplying formula~\eqref{eq-discrepancy} by each $E_j$
one obtains the following vectorial equation
$$
(0,\dots,0)=(c_1-2,\dots,c_n-2) + (\varepsilon_1,\dots,\varepsilon_n) (E_i\cdot E_j),
$$
where $c_j-2=(f) \cdot E_j$ and 
$\sum_i \varepsilon_i (E_i\cdot E_j)=K_{\tilde X/X} \cdot E_j$.
Since
$$E_i\cdot E_j=\begin{cases} -c_i & \text{ if } i=j\\ 1 & \text{ if } |i-j|=1 \\ 0 & |i-j|>1 \end{cases}$$
one obtains $\varepsilon_i=\frac{q_i+\bar q_i}{d}-1$.

\begin{exam}
To continue Example~\ref{ex-X14-11-Mnul} note that the discrepancy divisor associated with $X=X(14;1,11)$ is
$$-\frac{1}{7}\left(E_1+2E_2+3E_3+4E_4+2E_5\right).$$
\end{exam}

\subsection{Proof of Theorem~\ref{thm-Mnul}}
The purpose of this section is to give a description of $\cM^{\nul}_f$ for a generic $f$. Before we give the 
proof, the following technical result is needed.

\begin{lemma}
\label{lemma-kw}
Consider $0\leq k<d$ and $[k]=[0,\dots,0,k_r,\dots,k_s,0\dots,0]$ its \greedy \break $X$-de\-com\-po\-si\-tion, where
$k_r,k_s\neq 0$. Then the \greedy $X$-decomposition of $k+w$ is 
$$[k+w]=[c_{*}-2,c_{r-1}-1,k_r-1,k_{*},k_{s}-1,c_{s+1}-1,c_{*}-2].$$
\end{lemma}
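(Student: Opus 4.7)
The plan is to verify directly that the tuple $(a_0,\dots,a_{n+1})$ displayed on the right is the greedy $X$-decomposition of the integer $k+w$. By Definition~\ref{def-greedy}, such a tuple is uniquely characterised by two properties: the weighted sum $\sum_{i}a_i q_i=k+w$, and the tail inequality $\sum_{j\ge i} a_j q_j<q_{i-1}$ for every $i\ge 1$. Non-negativity and the bounds $a_i\le c_i-1$ are immediate from the construction, using $c_i\ge 2$ together with $k_i\le c_i-1$ (the greedy property of $[k]$).

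The central computational tool will be the telescoping identity
\[
w^{(i)} \;:=\; \sum_{j\ge i}(c_j-2)\,q_j \;=\; q_{i-1}-q_i-1\qquad(i\ge 1),
\]
obtained from $c_j q_j=q_{j-1}+q_{j+1}$ by writing $(c_j-2)q_j=(q_{j-1}-q_j)-(q_j-q_{j+1})$ and telescoping with the boundary values $q_n=1$, $q_{n+1}=0$. The case $i=1$ recovers $w=d-1-q$ of Proposition~\ref{prop-canonical}, and a restricted version gives $\sum_{j=r}^{s}(c_j-2)q_j=q_{r-1}-q_r-q_s+q_{s+1}$.

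For the sum identity I split the weighted sum of the proposed tuple into the outer $c_*-2$ blocks, the boundary entries $(c_{r-1}-1)q_{r-1}$ and $(c_{s+1}-1)q_{s+1}$, and the middle block, which contributes $\sum_{j=r}^{s}k_j q_j-q_r-q_s=k-q_r-q_s$. The $(c_j-2)$-contributions assemble into $w$, and the residual correction $q_{r-1}+q_{s+1}-\sum_{j=r}^{s}(c_j-2)q_j=q_r+q_s$ cancels the $-q_r-q_s$ from the middle, giving $k+w$. For the tail inequality I then compute $T_i:=\sum_{j\ge i}a_j q_j$ regime by regime, obtaining $T_i=k+w^{(i)}$ for $i\le r-1$, $T_r=k-q_r-1$, $T_i=\sum_{j\ge i}k_j q_j-1$ for $r<i\le s$, $T_{s+1}=q_s-1$, and $T_i=w^{(i)}$ for $i\ge s+2$. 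Each bound $T_i<q_{i-1}$ reduces to one of three known inputs: $k<q_{r-1}$ (the defining property of $r$), the greedy tail inequality already satisfied by $[k]$, or the closed form for $w^{(i)}$.

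The main obstacle I anticipate is the degenerate case $r=s$, where the two "$-1$" adjustments $k_r-1$ and $k_s-1$ collapse onto a single coordinate and the outer boundary entries become adjacent. When $k_r\ge 2$ (which forces $c_r\ge 3$) the formula applies verbatim with the single value $k_r-2$ at position $r=s$, and the telescoping calculations above go through. When $k_r=1$ the literal formula produces a phantom $-1$ at position $r$, which I plan to absorb by the reverse carry $-q_r+q_{r-1}+q_{r+1}=(c_r-1)q_r$: the true greedy tuple is obtained by replacing the triple $(c_{r-1}-1,\,-1,\,c_{s+1}-1)$ at positions $r-1,r,s+1$ with $(c_{r-1}-2,\,c_r-1,\,c_{s+1}-2)$, and the same telescoping argument verifies both required properties. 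I would treat $r=s$ as a short separate case rather than forcing it into a single uniform formula.
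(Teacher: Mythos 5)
Your proof is correct, and it takes a genuinely different route from the paper's. The paper proves the lemma structurally: it exhibits the claimed tuple as a sum of building blocks (the tuple $\alpha_{r-1}=[c_0-2,\dots,c_{r-2}-2,c_{r-1}-1,0,\dots]$, a truncation $\beta$ of $[k]$ with $1$ subtracted in positions $r$ and $s$, and the tail tuple $\alpha'_{s+1}=[0,\dots,0,c_{s+1}-1,c_{s+2}-2,\dots]$), and then appeals to two structural facts about greedy decompositions (a componentwise-smaller decomposition of a smaller number is still greedy; and the concatenation of a greedy prefix with a sufficiently small greedy suffix is greedy). You instead verify directly the two Euclidean-division properties that characterise the greedy tuple --- weighted sum equal to $k+w$ and the tail inequalities $\sum_{j\ge i}a_jq_j<q_{i-1}$ --- using the telescoping identity $\sum_{j\ge i}(c_j-2)q_j=q_{i-1}-q_i-1$. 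This is more computational but self-contained: it does not need the auxiliary ``greedy is closed under truncation and concatenation'' lemmas, only the three inputs you name ($k<q_{r-1}$, the tail inequality for $[k]$, and the closed form for $w^{(i)}$). Your explicit treatment of the degenerate case $r=s$ is a real improvement on the paper, whose statement and proof leave this case ambiguous; I checked both sub-cases ($k_r\ge 2$ with the entry $k_r-2$ at position $r$, and $k_r=1$ with the reverse carry absorbing the phantom $-1$) and they are right. One small point you share with the paper: when $r=1$ the displayed tuple has $a_0=c_0-1=1$, so your condition $\sum a_iq_i=k+w$ silently counts $a_0q_0=d$; the honest statement is that $\sum_{i\ge 1}a_iq_i$ equals the residue of $k+w$ modulo $d$, which is how the greedy decomposition is actually defined. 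This is harmless since $q_0=d\equiv 0$, but it is worth a sentence.
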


\begin{proof}
We will consider two cases:
\begin{itemize}
 \item Case 1. If $k\geq q+1$, then $k+w\equiv k-q-1 \! \mod d$, with $0\leq k-q-1<d$.
 \item Case 2. If $k<q+1$, in which case $k+w\equiv k+d-q-1 \mod d$, with $0\leq k+d-q-1<d$.
\end{itemize}

Before we prove the result for each case, let us note the following properties:

\begin{enumerate}
 \item\label{prop-1} $\alpha'_{j}:=[0,\dots,0,c_j-1,c_{j+1}-2,\dots,c_{n}-2]$ is a \greedy $X$-decomposition of
 $\|\alpha'_{j}\|_X=q_{j-1}-1$. The proof follows that of Proposition~\ref{prop-canonical}.
 \item\label{prop-2} $\alpha_{i}:=[c_0-2,\dots,c_{i-1}-2,c_i-1,0,\dots,0]$ is a \greedy $X$-decomposition 
 of $\|\alpha'_{i}\|_X=d-q+q_{i+1}$.
 \item\label{prop-4} If $\alpha_1$ and $\alpha_2$ are $X$-decompositions, $\alpha_1$ is \greedy,
 and $\alpha_2\leq \alpha_1$, then $\alpha_2$ is \greedy.
 \item\label{prop-5} If $\alpha=[a_0,\dots,a_i,0,\dots,0]$ (resp.~$\beta=[0,\dots,0,b_{i+1},\dots,b_n]$) is a \greedy 
 $X$-decomposition of $a$ (resp.~$b$) with $b<q_i$, then $\alpha+\beta$ is a \greedy $X$-de\-com\-po\-si\-tion of $a+b$. 
\end{enumerate}

In order to prove case 1, first note that $k>q$ implies $k_1>0$, that is $r=1$ in the statement. Let us define 
$\beta:=[k_0=0,k_1-1,k_2,\dots,k_s-1,0,\dots,0]$ which is the \greedy $X$-decomposition of $\|\beta\|_X=k-q_1-q_s=k-q-q_s$
by~\eqref{prop-4}. Using~\eqref{prop-5} above, one can see that 
$\beta+\alpha_{s+1}=[k_0=0,k_1-1,k_2,\dots,k_{s-1},k_s-1,c_{s+1}-1,c_{s+2}-2,\dots,c_n-2]$ is the 
required \greedy $X$-decomposition of $k-q-1=k+w$.

In order to prove case 2, similarly as before, 
$\beta:=[0,\dots,0,k_r-1,k_{r+1},\dots,k_{s-1}, \break k_s-1,0,\dots,0]$ is the \greedy $X$-decomposition of $\|\beta\|_X=k-q_r-q_s$
by~\eqref{prop-4}. Then using~\eqref{prop-5} $\alpha_{r-1}+\beta+\alpha'_{s+1}$ is the required \greedy $X$-decomposition.
\end{proof}

\begin{proof}[Proof of Theorem~\ref{thm-Mnul}]
The inclusion $\cO_X(k) \otimes \cO_X(w) \subseteq \cM^\nul_f$ is a consequence of $f$ being generic as follows.
Since $\cO_X(k) \otimes \cO_X(w)$ is generated by monomials it is enough to show the result for its monomial generators. 
Consider $h=x^{i_1+i_2}y^{j_1+j_2}$, where $i_1+qj_1=k+m_1d$ and $i_2+qj_2=w+m_2d$ for $m_i\geq 0$. Let us check that 
the pull-back of the $(1,q)$-blow-up of $h\frac{dx\wedge dy}{f}$ can be extended holomorphically to the exceptional divisor. 
Using formula~\eqref{eq-2form} is it enough to check that $N=\frac{\nu_{1,q}(h)-\nu_{1,q}(f)+1+q-d}{d}$ is non-negative, 
that is, $(i_1+i_2)+q(j_1+j_2)-k+1+q-d=(i_1+qj_1-k)+(i_2+qj_2-(d-q-1))=md$, where $m=m_1+m_2\geq 0$.
An induction argument on the number of required blow-ups similar to the one used in Lemma~\ref{lemma-qi} gives the result.

It remains to show that 
$$\dim_\CC \frac{\cO_X(k+w)}{\cM^\nul_f}=\dim_\CC \frac{\cO_X(k+w)}{\cO_X(k) \otimes \cO_X(w)}.$$
The left-hand side dimension is $\K_{X}(f)$ which equals $\sum_{i=1}^n k_i - 1$
by Lemma~\ref{lemma-K-generic}. In other words, one needs to check
$$\dim_\CC \frac{\cO_X(k+w)}{\cO_X(k) \otimes \cO_X(w)}=\sum_{i=1}^n k_i - 1.$$

Let us denote by $\gG_1$ (resp.~$\gG_2$) the Newton polygon associated with $\cO_X(k+w)$ 
(resp.~$\cO_X(k) \otimes \cO_X(w)$). If $S_i\subset \LL(k+w)$ denotes the set of $\LL(k+w)$-points 
in the first quadrant under $\gG_i$, $i=1,2$, then note that $S_i$ are both finite, $S_2\subset S_1$,
and $\#(S_2\setminus S_1)=\dim_\CC \frac{\cO_X(k+w)}{\cO_X(k) \otimes \cO_X(w)}$.
The result will follow from counting $\#(S_2\setminus S_1)=\sum_{i=1}^n k_i - 1$.
Let us write $[k]=[0,\dots,0,k_r,\dots,k_s,0,\dots,0]$, where $k_r,k_s\neq 0$.
By Lemma~\ref{lemma-kw} we know
%xxxgreedy
$$
\ell:=[k+w]=[c_*-2,c_{r-1}-1,k_r-1,k_{*},k_s-1,c_{s+1}-1,c_{*}-2].
$$

By Proposition~\ref{prop-canonical}, $m:=[k]+[w]=[k_*+c_*-2]$.
Given an $X$-decomposition $\alpha=[\alpha_0,\dots,\alpha_{n+1}]$, we will use the following notation:
$$\|\alpha\|_j:=\sum_{i=j}^{n+1}\alpha_iq_i, \quad \quad \text{ and } 
\quad \quad  \|\alpha\|^j:=\sum_{i=0}^{j}\alpha_i\bar q_i.$$

Note that $(\|m\|_j,\|m\|^{j-1})$ denotes the coordinates of a vertex in $\gG_2$ joining the group of segments of 
slope $-\frac{\bar q_{j-1}}{q_{j-1}}$ and the group of segments of slope $-\frac{\bar q_{j}}{q_{j}}$ as shown 
in Figure~\ref{fig-thm-1}. 

\begin{figure}[ht]
\begin{center}
\begin{tikzpicture}[scale=.5]
    % Draw axes
    \draw [<->,thick] (0,8) node (yaxis) [above] {$ $}
        |- (11,0) node (xaxis) [right] {$ $};
    \draw [thick] (10,0) -- (7,.5);
    \fill (7,.5) circle (2pt);
    \draw [dashed] (7,.5) -- (5,1);
    \fill (5,1) circle (2pt);
    \draw [thick] (5,1) -- (4.5,1.3);
    \draw [dashed] (4.5,1.3) -- (4,1.6);
    \draw [thick] (4,1.6) -- (3.5,1.9);
    \fill (3.5,1.9) circle (2pt) node [above right] {$(\|m\|_{j},\|m\|^{j-1})$};
    \draw [thick] (3.5,1.9) -- (2.5,2.9);
    \draw [dashed] (2.5,2.9) -- (1.5,3.9);
    \draw [thick] (1.5,3.9) -- (.5,5);
    \fill (.5,5) circle (2pt);
    \draw [dashed] (.5,5) -- (0,7);
\end{tikzpicture}
\caption{}
\label{fig-thm-1}
\end{center}
\end{figure}

Let us show following: 
\begin{enumerate}
 \item\label{eq-aux-1} $\|m\|^j=\|\ell\|^j$, $j=0,\dots,r-2$, 
 \item\label{eq-aux-1a} $\|m\|^{r-1}-\|\ell\|^{r-1}=-\bar q_{r-1}$,
 \item\label{eq-aux-2} $\|m\|_j=\|\ell\|_j$, $j=s+2,\dots,n+1$,
 \item\label{eq-aux-2a} $\|m\|_{r}-\|\ell\|_{r}=q_{r-1}$,
 \item\label{eq-aux-3} $\|m\|_j=\|\ell\|_j$, $j=0,\dots,r-1$,
 \item\label{eq-aux-4} $\|m\|^j=\|\ell\|^j$, $j=s+2,\dots,n+1$,
 \end{enumerate}
Equalities~\eqref{eq-aux-1} and~\eqref{eq-aux-2} are immediate since the first $r-1$ (resp.~last $(n-s-1)$) 
coordinates of $\ell$ and $m$ coincide. In order to obtain~\eqref{eq-aux-2a} note that:
$$
\|m\|_{r}-\|\ell\|_{r}=(c_r-1)q_r+\sum_{i=r+1}^{s-1}(c_i-2)q_i+(c_s-1)q_s-q_{s+1}.
$$
Using $c_iq_i=q_{i-1}+q_{i+1}$ one obtains the required formula. Also, in order to obtain~\eqref{eq-aux-3},
$$
\|m\|_{0}-\|\ell\|_{0}=\|m\|_{j}-\|\ell\|_{j}=-q_{r-1}+\|m\|_{r}-\|\ell\|_{r}.
$$
Analogous calculations prove~\eqref{eq-aux-1a} and~\eqref{eq-aux-4}.
Equalities~\eqref{eq-aux-1}-\eqref{eq-aux-4} show that $\gG_1$ and $\gG_2$ share the first $r-1$ groups of 
segments of slopes $-\frac{\bar q_j}{q_j}$, $j=0,\dots,r-2$, and all but one of the segments of slope 
$-\frac{\bar q_{r-1}}{q_{r-1}}$ as shown in Figure~\ref{fig-thm-2}. 

\begin{figure}[ht]
\begin{center}
\begin{tikzpicture}[scale=.5]
    % Draw axes
    \draw [<->,thick] (0,8) node (yaxis) [above] {$ $}
        |- (11,0) node (xaxis) [right] {$ $};
    \draw [thick] (10,0) -- (7,.5);
    \fill (7,.5) circle (2pt);
    \draw [dashed] (7,.5) -- (6,.75);
    \fill (6,.75) circle (2pt) node [above right] {${(\|m\|_{r},\|m\|^{r-1})=(\|\ell\|_{r}+q_{r-1},\|\ell\|^{r-1}-\bar q_{r-1})}$};
    %\fill (6,.75) circle (2pt) node [above right] {${\tiny{(\|m\|_{r},\|m\|^{r-1})=(\|\ell\|_{r}+q_{r-1},\|\ell\|^{r-1}-\bar q_{r-1})}}$};
    \draw [thick] (6,.75) -- (4,1.95);
    \fill (4,1.95) circle (2pt);
    \draw [thick] (4,1.95) -- (.3,5.75);
    \fill (.31,5.75) circle (2pt);
    \draw [thick] (6,.75) -- (5,1);
    \fill (5,1) circle (2pt) circle (2pt) node [below left] {${(\|\ell\|_{r},\|\ell\|^{r-1})}$};
    %\fill (5,1) circle (2pt) circle (2pt) node [below left] {${\tiny{(\|\ell\|_{r},\|\ell\|^{r-1})}}$};
    \draw [thick] (5,1) -- (4.5,1.3);
    \draw [dashed] (4.5,1.3) -- (4,1.6);
    \draw [thick] (4,1.6) -- (3.5,1.9);
    \fill (3.5,1.9) circle (2pt);
    \draw [thick] (3.5,1.9) -- (2.5,2.9);
    \draw [dashed] (2.5,2.9) -- (1.5,3.9);
    \draw [thick] (1.5,3.9) -- (.5,5);
    \fill (.5,5) circle (2pt);
    \draw [dashed] (.31,5.75) -- (0,7);
    \draw [thick] (.5,5) -- (.31,5.75);
\end{tikzpicture}
\caption{}
\label{fig-thm-2}
\end{center}
\end{figure}

In particular, the difference $\#(S_2\setminus S_1)$ is invariant if we assume
\begin{equation}
\label{eq-ell}
\ell:=[0,\dots,0,1,k_r-1,k_*,k_s-1,1,0,\dots,0]
\end{equation}
and
$$
m:=[0,\dots,0,0,k_r+c_r-1,k_*+c_*-1,k_s+c_s-1,0,0,\dots,0].
$$
Consider now the quadrilateral $H_r$ given by the vertices $P_r=(\|\ell\|_r,\|\ell\|^{r-1})$, 
$P_{r+1}=(\|\ell\|_{r+1},\|\ell\|^{r})$, $Q_r=(\|m\|_r,\|m\|^{r-1})$, and $Q_{r+1}=(\|m\|_{r+1},\|m\|^{r})$. 
Note that $\overrightarrow{P_rQ_{r}}=(q_r-q_{r+1},\bar q_{r+1}-\bar q_r)$, 
$\overrightarrow{P_rP_{r+1}}=(q_{r+1},-\bar q_{r+1})$ (see Figure~\ref{fig-thm-3}).
\begin{figure}[ht]
\begin{center}
\begin{tikzpicture}[scale=.5]
    \draw [thick,latex-] (10,0) -- (4,1);
    \fill (10,0) circle (2pt) node [right] {$Q_r$};
    \fill (4,1) circle (2pt) node [left] {$P_r$};
    \draw [thick] (10,0) -- (7,6);
    \fill (7,6) circle (2pt);
    \draw [thick,-latex] (4,1) -- (1,7);
    \fill (1,7) circle (2pt) node [left] {$P_{r+1}$};
    \draw [dashed] (1,7) -- (7,6);
    \draw [thick,-latex] (7,6) -- (6,8);
    \draw [thick,-latex] (1,7) -- (6,8);
    \fill (6,8) circle (2pt) node [right] {$Q_{r+1}$};
    \fill (5,4) circle (0pt) node [right] {$A^1_{r}$};
    \fill (4.5,7) circle (0pt) node [right] {$A^2_{r}$};
    \fill (2,4) circle (0pt) node [left] {$(k_r-1)(q_{r},-\bar q_{r})$};
    \fill (8,5) circle (0pt) node [right] {$(k_r+c_r-2)(q_{r},-\bar q_{r})$};
    \fill (6,0) circle (0pt) node [below] {$(q_{r-1}-q_r,\bar q_{r}-\bar q_{r-1})$};
    \fill (3,10) circle (0pt) node [below] {$(q_{r}-q_{r+1},\bar q_{r+1}-\bar q_{r})$};
\end{tikzpicture}
\end{center}
\caption{}
\label{fig-thm-3}
\end{figure}
A simple calculation gives the area of this polygon $H_r$ after decomposing it as a 
parallelogram and a triangle (see Figure~\ref{fig-thm-3}) as $A_r=A_r^1+A_r^2$, where
$$
A_r^1=(k_r-1)(\bar q_r,q_r)\cdot (q_{r-1}-q_r,\bar q_{r}-\bar q_{r-1})=(k_r-1)d
$$
and
$$
\array{c}
A_r^2=
\frac{1}{2}(q_{r}-q_{r+1},\bar q_{r+1}-\bar q_{r})\cdot (c_r-1)(\bar q_{r},q_r)=\\
\\
\frac{1}{2}(c_r-1)(\bar q_{r}q_{r}-\bar q_{r}q_{r+1}+q_r\bar q_{r+1}-\bar q_{r}q_r)=
\frac{1}{2}(c_r-1)d,
\endarray
$$
where Lemma~\ref{lemma-q} is used for these equalities.
Hence $A_r=\frac{1}{2}d(2k_r+c_r-3)$. Using Pick's Theorem for the lattice $\LL(k+w)$ one obtains
$$
\frac{A_r}{d}=\frac{1}{2}B_r+I_r-1=\frac{1}{2}(k_r+k_r+c_r-1)+I_r-1,
$$
where $B_r$ is the number of $\LL(k+w)$-boundary points on the polygon $H_r$, namely 
$\Big((k_r-1)+1\Big)+\Big((k_r+c_r-2)+1\Big)$ and $I_r$ is the number of $\LL(k+w)$-interior points on $H_r$.
Therefore
$$
\frac{1}{2}(2k_r+c_r-3)=\frac{1}{2}(2k_r+c_r-1)+I_r-1
$$
which implies $I_r=0$.
Analogously, one can prove that $I_i=0$, where $I_i$ is the number of $\LL(k+w)$-interior points on $H_i$,
the polygon determined by $P_i$, $P_{i+1}$, $Q_i$, and~$Q_{i+1}$, $i=r+1,\dots,s$.

Finally, this implies that $\#(S_2\setminus S_1)$ can be calculated as the number minus two of boundary 
$\LL(k+w)$-points on the Newton polygon given by $\ell$ in~\eqref{eq-ell}, that is,
$$\gG(\ell)=\gG(q_{r-1})\oplus\gG(q_r)^{k_r-1}\oplus (\oplus_{i=r+1}^{s-1} \gG(q_i)^{k_i})
\oplus \gG(q_s)^{k_s-1}\oplus\gG(q_{s+1}),$$ 
which coincides with $(\|\ell\|_1+1)-2=\sum_i k_i - 1=\|k\|_1-1$ as required.
\end{proof}

\begin{exam}
\label{ex-X14-11-Mnul}
As a continuation of Example~\ref{ex-X14-11-mu} on $X=X(14;1,11)$, we will calculate $\cM^{\nul}(10)$, that is, the 
LR-logarithmic module $\cM^{\nul}_h$ for a generic $h\in \cO_X(10)$. According to
Lemma~\ref{lemma-K-generic} $[k+\w]=[10+2]=[0,1,0,0,0,1,0]$ and $[k]+[\w]=[0,0,1,0,2,0,0]$, Therefore
$\cM^{\nul}(10)=\cO_X(8)\otimes \cO_X(2)^2$. Finally, 
$$\K_X(10)=\dim_\CC \frac{\cO_X(11)\otimes \cO_X(1)}{\cO_X(8)\otimes \cO_X(2)^2}=1.$$ 
\end{exam}

\begin{cor}
The module $\cM^{\nul}_f$ is monomial if $f$ is generic.
\end{cor}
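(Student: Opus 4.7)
The plan is to read the corollary as a direct consequence of Theorem~\ref{thm-Mnul}, so essentially all the work has already been done. By that theorem, for generic $f\in\cO_X(k)$, one has the identification
\[
\cM^{\nul}_f = \cO_X(k)\otimes \cO_X(w),
\]
so it suffices to show that the right-hand side is a monomial $\cO_X$-module.

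First I would invoke Property~\ref{property}\eqref{property4}, which tells us that both $\cO_X(k)$ and $\cO_X(w)$ are finitely generated monomial $\cO_X$-modules, with explicit generating sets of the form $\{x^iy^j \mid 0\le i,j<d,\ i+qj\equiv k\!\!\mod d\}$ and analogously for $w$. Then I would note that the tensor product $\cO_X(k)\otimes \cO_X(w)$, viewed as a submodule of $\cO_X(k+w)$ via the multiplication map (cf.~Property~\ref{property}\eqref{property3}), is generated by products of generators of each factor; since each such product is again a monomial $x^{i_1+i_2}y^{j_1+j_2}$, the resulting module is generated by monomials and is therefore monomial.

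Alternatively, one could use the right-hand equality in Theorem~\ref{thm-Mnul}, namely $\cM^{\nul}_f=\bigoplus \cO_X(q_i)^{k_i+c_i-2}$, and note that each summand $\cO_X(q_i)$ is monomial by Property~\ref{property}\eqref{property4}, so that the direct sum is monomial as well. Either route gives the conclusion with no further calculation.

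There is essentially no obstacle here: the content of the corollary is entirely captured by the first equality of Theorem~\ref{thm-Mnul}, which is the main result of that theorem, combined with the elementary observation that tensor products (or direct sums) of monomial modules are again monomial. The only thing to be careful about is the convention that $\cO_X(k)\otimes\cO_X(w)$ is interpreted inside $\cO_X(k+w)$ as the $\cO_X$-submodule generated by all products $hh'$ with $h\in\cO_X(k)$, $h'\in\cO_X(w)$, so that the notion of ``monomial submodule'' is unambiguous.
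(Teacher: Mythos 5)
Your argument is correct and is exactly the intended one: the corollary is stated with no proof precisely because it follows immediately from the first equality of Theorem~\ref{thm-Mnul} together with Property~\ref{property}\eqref{property4}, via the observation that a product of monomial submodules inside $\cO_X(k+w)$ is again monomial. One small caution: the paper's symbol $\bigoplus$ in the right-hand equality of Theorem~\ref{thm-Mnul} (as in Theorem~\ref{thm-descomp}) is used for the same ``product inside $\cO_X(k+w)$'' operation as $\otimes$, not a genuine direct sum; your alternative route still works, but it is safer to read it as iterated multiplication of the monomial modules $\cO_X(q_i)$, which again visibly yields a monomial module.
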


\begin{rem}
In general, $\cM^{\nul}_f$ is not monomial if $f$ is not generic, even if it is a product of curvettes on $X$,
as the following examples shows.
\end{rem}

\begin{exam}
Let $f=(x+y^4)^2-y^{18}\in \O_X(2)$ be a nongeneric germ in $X=X(5;1,4)$. Two consecutive blow-ups of weight
$(4,1)$ and $(1,1)$ respectively serve as a $\Q$-resolution of $(f,0)$. This resolution allows one to use the recursive
formula~\cite[Theorem 4.5]{CMO12}, which results in $\delta_X(f)=\frac{13}{5}$. Let us calculate $\mathcal{M}^{\nul}_{f}$. 
One can easily check that $x^2-y^{8}\in \mathcal{M}^{\nul}_{f}$, since
%  because $(x^2-y^{8})\ \dfrac{dx \wedge dy}{(x+y^4)^2-y^{18}}\in\Omega_X^2(\logres \langle f\rangle)$

\begin{eqnarray*}
(x^2-y^{8})\ \dfrac{dx \wedge dy}{(x+y^4)^2-y^{18}}& \leftblowup{x=\bar u_1\bar v_1^4, \ u_1=\bar u_1+1}{y=\bar v_1,\ {v}_1=
\bar v_1^5} & \bar v_1^{8}\ (\bar u_1^2-1)\ \dfrac{d\bar u_1 \wedge d\bar v_1^5}{\bar v_1^8((\bar u_1+1)^2-\bar v_1^{10})}=\\
\\
\\
u_1 (u_1-2)\ \dfrac{du_1 \wedge dv_1}{(u_1^2-v_1^2)}&\leftblowup{u_1=u_2 v_2}{v_1=v_2} &u_2\  
(u_2v_2-2) \dfrac{du_2 \wedge dv_2}{ (u_2-1)(u_2+1)}.\\
\end{eqnarray*}

Analogously one can also check that $y^{13},\ x^2+xy^{4}  \in \mathcal{M}^{\nul}_{f}$. Using
the curvette $h= x^2-y^3\in \cO_X(2)$, one obtains $\K_X(2)=\|2\|_1-1=0$ (Lemma~\ref{lemma-K-generic}).
Also, it is easy to check that $\delta_X(2)=\frac{3}{5}$ (via a $(3,2)$-blow-up as a resolution as mentioned above), 
hence $\R_X(2)=\delta_X(2)-\K_X(2)=\frac{3}{5}$. Since 
$\R_X(2)=\frac{3}{5}=\delta_X(f)-\K_X(f)=\frac{13}{5}-\K_X(f)$, one obtains 
$$
\K_X(f)=2=\dim_{\CC} \frac{\O_X(2)}{{\mathcal M}^{\nul}_{f}}\leq \dim_{\CC} 
\frac{\O_X(2)}{{\CC}\{x^2-y^{8},y^{13},x^2+xy^{4}\}}=2.
$$
Hence, ${\mathcal M}^{\nul}_{f}={\CC}\{x^2-y^{8},y^{13},x^2+xy^{4}\}$,
which is not a monomial module.
\end{exam}

% \section{Examples}
% \label{sec-example}
% \begin{exam}
% $(d;1,q)= (69;1,38)$, $k=32$
% $$
% \array{rclcccccc}
% \mathbf{q} &=& {[}69,&38,&7,&4,&1,&0{]}\\
% \mathbf c &=& {[}2,&2,&6,&2,&4,&2{]}\\
% \mathbf{\bar{q}}&=&{[}0,&1,&2,&11,&20,&69{]}\\
% {[w]}&=& {[}0,&0,&4,&0,&2,&0{]}\\
% {[k]}&=&{[}0,&0,&4,&1,&0,&0{]}\\
% {[k]}+{[w]}&=&{[}0,&0,&8,&1,&2,&0{]}\\
% {[k+w]}&=&{[}0,&1,&3,&0,&3,&0{]}\\
% \endarray
% $$
% 
% % $\mathbf{q}=[q_0,q_1,q_2,q_3,q_4,q_5]=[69,38,7,4,1,0]$, $\mathbf c=[2,2,6,2,4,2]$, $\mathbf{\bar{q}}=[0,1,2,11,20,69]$, 
% % $[k]=[0,0,4,1,0,0]$, $[w]=[0,0,4,0,2,0]$, $[k]+[w]=[0,0,8,1,2,0]$, $[k+w]=[0,1,3,0,3,0]$.
% 
% Hence,
% $$\cO_X(k+w)= \cO_X(38)\otimes \cO_X(7)^3\otimes \cO_X(1)^3$$
% $$\cO_X(k)\otimes\cO_X(w)=\cO_X(7)^8\otimes \cO_X(4)^1\otimes \cO_X(1)^2$$
% $$\dim_\CC \dfrac{\cO_X(k+w)}{\cO_X(k)\otimes\cO_X(w)}= 4.$$
% 
% \begin{figure}[ht]
% \begin{center}
% \includegraphics{oqs} \\[1cm]
% \includegraphics[scale=0.9]{oqs2}
% \caption{}
% \end{center}
% \end{figure}
%  
% \end{exam}

\end{document}